\newcommand{\eqdef}{\stackrel{\scriptscriptstyle\rm def}{=}}
\newtheorem{theorem}{Theorem}[section]
\newtheorem{proposition}[theorem]{Proposition}
\newtheorem{corollary}[theorem]{Corollary}
\newtheorem{definition}[theorem]{Definition}
\newtheorem{lemma}[theorem]{Lemma}
\newtheorem{remark}[theorem]{Remark}
\newtheorem{observation}[theorem]{Observation}
\newtheorem{example}[theorem]{Example}
\newcommand{\beha}{\begin{enumerate}}
\newcommand{\behe}{\end{enumerate}}
\renewcommand{\epsilon}{\varepsilon}
\newcommand{\R}{{\rm Rot}}
\newcommand{\Per}{{\rm Per}}
\newcommand{\EPer}{{\rm EPer}}
\newcommand{\Or}{\mathcal{O}}
\DeclareMathOperator{\topo}{top}
\DeclareMathOperator{\dist}{dist}
\newcommand{\cM}{\EuScript{M}}
\newcommand{\cH}{\EuScript{H}}
\newcommand{\bR}{{\mathbb R}}
\newcommand{\bC}{{\mathbb C}}
\newcommand{\bN}{{\mathbb N}}
\newcommand{\bQ}{{\mathbb Q}}
\newcommand{\cA}{{\mathcal A}}
\newcommand{\cC}{{\mathcal C}}
\newcommand{\PF}{Perron-Frobenius}
\def\1{1\!\!1}
\def\and{\text{ and }}
        \def\diam{\text{\rm {diam}}}
        \def\conv{\text{{\rm conv}}}
\def\h{{\text h}}
                        \def\^{\widetilde}
\def\Per{{\rm Per}}
\def\inn{{\rm int}}
\def\var{{\rm var}}
\def\Per{{\rm Per}}
\def\EPer{{\rm EPer}}
\def\1{1\!\!1}
\def\rv{{\rm rv}}
\def\inn{{\rm int\ }}
\newtheorem*{thmA}{Theorem \ref{thmcomprotshift}}
\newtheorem*{thmB}{Theorem \ref{thmglobentcont}}
\newtheorem*{thmC}{Theorem \ref{thmmaingenentcomp}}
\newtheorem*{thmD}{Theorem \ref{thmfin}}
\DeclareMathSymbol{\varnothing}{\mathord}{AMSb}{"3F}
\renewcommand{\emptyset}{\varnothing}
\title{On the computability of rotation sets and their entropies}
\thanks{This work was significantly advanced while the authors were supported by the Collaborate@ICERM program at ICERM}
\author{Michael Burr}
\address{Department of Mathematical Sciences, Clemson University, Clemson SC}\email{burr2@clemson.edu}
\thanks{Burr was partially supported by the National Science Foundation Grant CCF-1527193. }
\author{Martin Schmoll}\address{Department of Mathematical Sciences, Clemson University, Clemson SC}\email{schmoll@clemson.edu}
\thanks{Schmoll and Wolf were partially supported by a collaboration grant from the Simons Foundation (\#318898 to Martin Schmoll)}
\author{Christian Wolf}\address{Department of Mathematics, The City College of New York, New York, NY, 10031, USA}\email{cwolf@ccny.cuny.edu}
\begin{document}

\begin{abstract}
Given a continuous dynamical system $f:X\to X$ on a compact metric space $X$ and an $m$-dimensional continuous potential $\Phi:X\to \bR^m$, the (generalized) rotation set $\R(\Phi)$ is defined as the set of all $\mu$-integrals of $\Phi$, where $\mu$ runs over all invariant probability measures. Analogous to the classical topological entropy, one can associate the localized entropy $\cH(w)$ to each $w\in \R(\Phi)$. In this paper, we study the computability of rotation sets and localized entropy functions by deriving conditions that imply their computability.  We then apply our results to study to the case of subshifts of finite type. We  prove that $\R(\Phi)$ is computable and that $\cH(w)$ is computable in the interior of the rotation set. Finally, we construct an explicit example that shows that, in general, $\cH$ is not continuous on the boundary of the rotation set, when considered as a function  of $\Phi$ and $w$. This suggests that, in general, $\cH$ is not computable at the boundary of rotation sets.
\end{abstract}
\keywords{Generalized rotation sets, entropy, thermodynamic formalism, computability}
\subjclass[2010]{Primary 37D35, 37E45, 03D15 Secondary 37B10,  37L40, 03D80}
\maketitle

\section{Introduction}

\subsection{Motivation}

Frequently, the trajectory of a particular orbit in a dynamical system is hard, if not impossible, to determine.  For instance,  computations may be sensitive to the accuracy of the initial conditions.  This difficulty motivates the study of statistical properties of the system.  In this approach, one typically considers averages, or similar statistical computations, of measurements performed at different times.  The mathematical theory supplies several 
objects and invariants, such as the entropy, pressure, and characteristic exponents that give 
insight in the statistical behavior of a system.  In this paper, we study integrals of (vector-valued) potential functions with respect to measures invariant under the dynamics.   In particular, we prove computability results for the set of integrals of these potential functions as well as their localized entropies.

To illustrate the computational challenges, we consider the dynamical system given by the doubling map, i.e., $f:[0,1)\rightarrow[0,1)$ where $f(x)=2x\pmod 1$.  Since computers use binary arithmetic, the standard number types (such as floats or doubles) on a computer represent dyadic rational numbers, i.e., elements of $\mathbb{Z}\left[\frac{1}{2}\right]$.  Since these numbers have a finite binary expansion, a straight-forward calculation shows that each dyadic rational number in $[0,1)$ is eventually mapped to $0$ under iteration. Therefore, computational experiments with dyadic integers might lead to the incorrect hypothesis that $0$ is an attracting fixed point that attracts all $x\in[0,1)$.  Alternately, if one were to symbolically represent rational numbers, such experiments might lead to the incorrect conclusion that every point in $[0,1)$ is preperiodic.  On the other hand, with computability theory, we study the behavior of $x\in[0,1)\setminus\bQ$ even though we may only compute\footnote{In this simple example, it is possible to use symbolic tools to study the behavior of more points, such as the roots of polynomials with integral coefficients.  Since this may not be possible in more sophisticated systems, we do not address such computations here.} the behavior of periodic points.

The main idea behind computability theory is to represent mathematical objects, e.g., points, sets, and functions by convergent sequences produced by a Turing machine (a computer algorithm for our purposes).  We say that a point, set, or function is computable if there exists a Turing machine that outputs an approximation to any prescribed accuracy, for additional details, see Section \ref{sec:compute:basic} and \cite{Tu}.  Using convergent sequences of points instead of single points allows one to study the behavior of a larger class of objects and to increase the precision of an approximation, as needed, to adjust for the sensitivity to the accuracy of the initial conditions.  

In this paper, we provide conditions so that the rotation set, i.e., the set of integrals of potential functions with respect to all invariant measures, and the localized entropy function are computable, i.e., can be approximated to any prescribed accuracy.   Rotation sets appear as natural extensions of Poincar\'e's rotation number for circle homeomorphisms, and, more generally, of pointwise rotation sets for homeomorphisms on the $n$-torus, see \cite{MZ}.  Rotation sets play a role in several areas of ergodic theory and dynamical systems, and
they have been studied recently by several authors, see, e.g., \cite{B, BZ, GL, GKLM, GM, Je, KW1, KW3, MZ, Z}.  These studies include applications to higher-dimensional multifractal analysis, see, e.g., \cite{BQ} and the references therein, ergodic optimization \cite{GL,Je3}, and the study of ground states and zero-temperature measures \cite{KW4}. 

Our results apply directly to subshifts of finite type for which we prove computability of the rotation set $\R(\Phi)$ of a continuous potential $\Phi$ and the localized entropy $\cH(w)$ for all $w\in \inn \R(\Phi)$.  Our results extend, immediately, to systems that can be modeled (via a computable conjugacy) by a symbolic system, such as uniformly hyperbolic systems with a computable Markov partition and certain parabolic systems, see, e.g., \cite{Br1,BG,UW1}.  Other potential applications include systems that can be exhausted by sufficiently large sets on which they are conjugate to symbolic systems, such as certain non-uniformly hyperbolic systems, e.g., \cite{GeW}, systems with shadowing \cite{GMe}, and systems with discontinuous potentials, e.g., the geometric potential in the presence of critical points \cite{PR}. 

In the literature, there are several recent papers that study invariant sets, topological entropy, and other invariants from the computability point of view.  The computability of Julia sets has been particularly popular, see, e.g., \cite{D1, DY, Br1, BBRY, BBY1, BY, BY1, BY2, BY3}. There are several results about the computability of certain specific measures, see \cite{BBRY,GHR} and the references therein, such as a maximal entropy measure or physical measure, the numerical computation of entropy and dimension for hyperbolic systems, see, e.g., \cite{JP1} and \cite{JP2} and the references therein, as well as with the computation of the topological entropy/pressure for one and multi-dimensional shift maps, see, e.g., \cite{HM,Pa, PS1, HS,Sc,SP}. To the best of our knowledge, our attempt is the first to establish computability of an entire entropy spectrum within the space of all invariant measures.  

\subsection{Background material from dynamical systems}\label{Section:Notation}
In this section, we introduce the relevant material from the theory of dynamical systems. Our main objects of study are rotation sets and their associated entropies.  

Let $f:X\to X$ be a continuous map on a compact metric space $X$. Let $\cM$ denote the space of all $f$-invariant Borel probability measures on $X$, endowed with the weak$^\ast$ topology. This makes $\cM$ into a compact, convex, and metrizable topological space. Recall that $\mu\in \cM$ is ergodic if every $f$-invariant set has either measure zero or one. We denote by $\cM_E\subset \cM$ be the subset of ergodic measures. 

We denote the {\em set of all periodic points of $f$ with smallest period $n$} by $\Per_n(f)$. We also call $n$  the {\em prime period} of $x\in \Per_n(f)$. Moreover, $\Per(f)=\bigcup_{n\geq1} \Per_n(f)$ denotes the {\em set of periodic points} of $f$. The elements of $\Per_1(f)$ are the {\em fixed points} of $f$. For $x\in \Per_n(f)$, we denote the unique invariant measure supported on the orbit of $x$ by $\mu_x=1/n(\delta_x+\dots +\delta_{f^{n-1}(x)})$. We also call $\mu_x$ the periodic point measure of $x$. Moreover, we write $\cM_{\rm Per}=\{\mu_x: x\in \Per(f)\}$. We observe that $\cM_{\rm Per}\subset \cM_E$.

Throughout this paper, we assume that $f$ has finite topological entropy (see, e.g., \cite{Wal:81} for the definition of topological entropy).   Given an $m$-dimensional potential  $\Phi=(\Phi_1,\dots,\Phi_m)\in C(X,\bR^m)$, we denote the {\it generalized rotation set} of  $\Phi$ with respect to $f$
by 
 $\R(\Phi)=\R(f,\Phi)$ defined by
\begin{equation} \label{defrotset}
 \R(\Phi)= \left\{\rv(\mu): \mu\in\cM\right\},
\end{equation}
where
\begin{equation}
\rv(\mu)=\left(\int \Phi_1\ d\mu,\dots,\int \Phi_m\ d\mu\right)
\end{equation}
 denotes the {\em rotation vector} of the measure $\mu$. Given $w\in \R(\Phi)$, we call $\cM_\Phi(w)=\{\mu\in \cM: \rv(\mu)=w\}$ the {\em rotation class} of $w$.
 It follows, from the definition, that
the rotation set is a compact and convex subset of $\bR^m$. 

The relevance of rotation sets for understanding the behavior of dynamical systems 
can be seen by considering a 
sequence of potentials $(\Phi_k)_k$ that is dense in $C(X,\bR)$. Let $R_m$ be the rotation
set of the initial $m$-segment of potentials, that is $R_m = \R(\Phi_1,\dots,\Phi_m)$.
It follows, from the representation theorem, that the rotation classes of the
rotation sets $R_m$ form a decreasing sequence of partitions of $\cM$ whose intersections
contain a unique invariant measure. Therefore, for large $m$, the set
$R_m$ provides a fine partition of $\cM$ and acts as a finite dimensional
approximation to the set of all invariant probability measures.
 We say $(R_m)_m$ is a {\em filtration} of $\cM$.
 
 We say  $(\Phi_{\epsilon_n})_n$, where $\Phi_{\epsilon_n}\in C(X,\bR^m)$ is an {\em approximating sequence} of $\Phi$ if $\epsilon_n\to 0$ as $ n\to \infty$ and $\|\Phi_{\epsilon_n}-\Phi\|_\infty<\epsilon_n$ for all $n\in \bN$. Here, $\|.\|_{\infty}$ denotes the supremum norm on $C(X,\bR^m)$.
  
Next, we define the localized entropy of rotation vectors.
Following \cite{Je, KW1}, we define the {\em localized entropy} of $w\in \R(\Phi)$ by
\begin{equation}\label{defH}
\cH(w)=\cH_\Phi(w)\eqdef\sup\{h_\mu(f): \mu \in \cM_\Phi(w)\}.
\end{equation}
Here, $h_\mu(f)$ denotes the measure-theoretic entropy of $f$ with respect to $\mu$ (see \cite{Wal:81} for details).  
We consider systems for which  $\mu\mapsto h_\mu(f)$ is upper semi-continuous
on $\cM$; thus, there exists at least one  $\mu\in \cM_\Phi(w)$
with
\begin{equation}\label{deflocmaxent}
h_\mu(f)=\cH(w).
\end{equation}
In this case, we say that $\mu$ is a {\it localized measure of maximal entropy} at $w$. Moreover,  the upper semi-continuity of $\mu\mapsto h_\mu(f)$ implies that $w\mapsto \cH(w)$ is continuous on $\R(\Phi)$, see \cite{Je}.

\subsection{Statement of the Results.} We continue to use the notation from Section 1.2. Let $f:X\to X$ be a continuous map on a compact  metric space $X$,  and let  $\Phi:X\to \bR^m$ be a  continuous potential. We  assume that $\mu\mapsto h_\mu(f)$ is upper semi-continuous, which guarantees  that the localized entropy function $w\mapsto \cH(w)$ is continuous.
 It follows, from the definitions, that
$\R(\Phi)$ is a compact and convex subset of $\bR^m$. Conversely, for symbolic systems, see Section 5, every compact convex subset of $\bR^m$ can be ``realized" by an appropriate potential  \cite{KW1}. 
Evidently, in order to shed light on the computability of $\R(\Phi)$, we need to assume that $f$ and  $\Phi$ are computable.
We establish, in Theorem \ref{thmrotcomp}, a general criterium for the computability of $\R(\Phi)$. We then prove that this criterium is satisfied for subshifts of finite type (SFT).

\begin{thmA}
Let $f:X\rightarrow X$ be a transitive subshift of finite type with computable distance $d_{\theta}$.  If $\Phi\in C(X,\mathbb{R}^m)$ is computable, then $\R(\Phi)$ is computable.
\end{thmA}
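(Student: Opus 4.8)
The plan is to describe $\R(\Phi)$ through its support function and to prove that this function is computable, uniformly over a dense computable set of directions. Write $\psi_\xi(x)=\langle\xi,\Phi(x)\rangle=\sum_{i=1}^m\xi_i\Phi_i(x)$ for $\xi\in\bR^m$, and let $S_n\psi=\sum_{j=0}^{n-1}\psi\circ f^j$. Since $\R(\Phi)$ is compact and convex, it is determined by $h(\xi):=\max_{w\in\R(\Phi)}\langle\xi,w\rangle=\max_{\mu\in\cM}\int\psi_\xi\,d\mu$, and it suffices to compute $h(\xi)$ to arbitrary accuracy uniformly for $\xi\in\bQ^m$, together with an effective a priori bound $\R(\Phi)\subseteq\overline{B}(0,\|\Phi\|_\infty)$. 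A transitive subshift of finite type with computable metric $d_\theta$ is effectively compact: the admissible words of length $n$ are computably enumerable from the transition data and their cylinders cover $X$ with computably small $d_\theta$-diameter. Hence, $\Phi$ being computable makes every $\psi_\xi$ (a $\bQ$-linear combination of the $\Phi_i$) and every Birkhoff sum $S_n\psi_\xi$ computable, and since a computable function on an effectively compact computable metric space attains a computable maximum, both $\|\Phi\|_\infty$ and $a_n(\xi):=\max_{x\in X}S_n\psi_\xi(x)$ are computable reals, uniformly in $n$ and $\xi$.

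First I would show that $h(\xi)$ is lower computable. In a transitive subshift of finite type the periodic point measures are weak$^\ast$ dense in $\cM$, so, as $\rv$ is affine and continuous, $\{\rv(\mu_x):x\in\Per(f)\}$ is dense in $\R(\Phi)$ and $h(\xi)=\sup\{\langle\xi,\rv(\mu_x)\rangle:x\in\Per(f)\}$. A periodic point of the shift is a periodic sequence coded by a finite admissible word, and such words are computably enumerable; since $\Phi$ is computable, $\langle\xi,\rv(\mu_x)\rangle=\frac1n\sum_{j=0}^{n-1}\psi_\xi(f^jx)$ is computable uniformly in the word and in $\xi$. Taking running maxima over an enumeration of periodic orbits then produces an increasing computable sequence converging to $h(\xi)$ from below.

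Next I would show that $h(\xi)$ is upper computable. The sequence $n\mapsto a_n(\xi)$ is subadditive, since $S_{n+k}\psi_\xi(x)=S_n\psi_\xi(x)+S_k\psi_\xi(f^nx)$, so $a_n(\xi)/n\to\inf_n a_n(\xi)/n$; pushing a maximizer of $S_n\psi_\xi$ forward to the empirical measure $\frac1n\sum_{j=0}^{n-1}\delta_{f^jx}$ and passing to a weak$^\ast$ limit exhibits an invariant measure attaining this infimum, while $\int\psi_\xi\,d\mu=\frac1n\int S_n\psi_\xi\,d\mu\le a_n(\xi)/n$ for every invariant $\mu$ gives the opposite inequality; hence $\inf_n a_n(\xi)/n=h(\xi)$. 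Consequently $N\mapsto\min_{n\le N}a_n(\xi)/n$ is a decreasing computable sequence converging to $h(\xi)$ from above. Being simultaneously lower and upper computable, $h(\xi)$ is computable, uniformly in $\xi\in\bQ^m$.

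Finally, to reconstruct the set from $h$: given a target accuracy $2^{-k}$, use the bound $\|\Phi\|_\infty$ and the Lipschitz estimate $|h(\xi)-h(\xi')|\le\|\Phi\|_\infty\,|\xi-\xi'|$ to pick a finite rational net $\xi_1,\dots,\xi_N$ of the unit sphere fine enough that the rational polytope $P=\overline{B}(0,\|\Phi\|_\infty+1)\cap\bigcap_{j}\{w:\langle\xi_j,w\rangle\le h(\xi_j)+2^{-k}\}$, with each $h(\xi_j)$ computed to accuracy $2^{-k}$, satisfies $\R(\Phi)\subseteq P$ and $d_H(P,\R(\Phi))<2^{-k}$; covering the explicit rational polytope $P$ by finitely many rational balls, together with the dense inner points of the second step, exhibits $\R(\Phi)$ as computable in the Hausdorff metric. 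The main obstacle is that neither one-sided approximation comes with a computable error bound: the periodic-orbit lower bounds and the subadditive upper bounds $a_n(\xi)/n$ each approach $h(\xi)$ with no a priori modulus, so neither alone yields computability. The crux is that their conjunction does — density of periodic measures drives the approximation from below, Fekete subadditivity together with effective compactness of the shift drives it from above, and the resulting sandwich pins $h(\xi)$ down to any prescribed precision; a lesser point needing care is the (possibly lower-dimensional) passage back from the support function to a Hausdorff-metric description, which the bound on $\|\Phi\|_\infty$ and the Lipschitz continuity of support functions keep uniform and effective.
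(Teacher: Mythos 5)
Your proposal is correct, and it takes a genuinely different route from the paper. The paper's proof verifies the hypotheses of a general criterion (Theorem \ref{thmrotcomp}): it approximates $\Phi$ by a locally constant potential $\Phi_n$, then invokes Ziemian's theorem (Theorem \ref{thmelementary}) that for a locally constant potential on a transitive SFT the rotation set is \emph{exactly} the convex hull of the rotation vectors of the finitely many $k$-elementary periodic point measures, and finally computes that finite convex hull directly. In that argument the outer approximation of $\R(\Phi)$ comes ``for free'' from knowing $\R(\Phi_n)$ exactly as a polytope. You instead work with the support function $h(\xi)=\max_{\mu\in\cM}\int\langle\xi,\Phi\rangle\,d\mu$ and never approximate $\Phi$ by a locally constant potential at all: you obtain $h(\xi)$ as a sandwich between a lower-computable sequence (running maxima over periodic orbits, using density of $\cM_{\rm Per}$ in $\cM$) and an upper-computable sequence ($\min_{n\le N}a_n(\xi)/n$, using Fekete's subadditivity and the identity $h(\xi)=\inf_n a_n(\xi)/n$), and then reconstruct $\R(\Phi)$ from its support function on a rational net of the sphere, controlling the degenerate (lower-dimensional) case via the a priori bound $\R(\Phi)\subseteq\overline{B}(0,\|\Phi\|_\infty)$ and the Lipschitz constant $\|\Phi\|_\infty$ of $h$. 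The paper's approach buys an explicit finite-vertex polytope approximation and is structured to transfer to conjugate systems through Theorem \ref{thmrotcomp}/Corollary \ref{corcomputrot}; your approach is softer and arguably more general, since the upper bound relies only on effective compactness and the subadditive characterization of the maximal ergodic average, not on the polyhedral structure of locally constant rotation sets. One small point worth flagging explicitly in a final write-up: when taking the running maxima/minima, the rational approximants $b_j$ of $\langle\xi,\rv(\mu_{x_j})\rangle$ and $d_n$ of $a_n(\xi)/n$ should be shifted by their known error tolerances (use $b_j-2^{-j}$ and $d_n+2^{-n}$) so that the two sequences are genuine one-sided bounds before the sandwich is applied.
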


Here, the computability of a set $S$ means that there exists an algorithm that approximates $S$  in the Hausdorff metric by a finite union of computable closed balls. We refer the reader to Section 2.1 for the precise definition.  As a consequence of the proofs of Theorem \ref{thmrotcomp}  and \ref{thmcomprotshift}, we obtain the computability of the maximal radius $r$ of a  ball centered at $w\in \inn \R(\Phi)$ that is contained  in $\R(\Phi)$, see Proposition \ref{prop:radiuscomputable}.

We expect Theorem \ref{thmcomprotshift} to have several applications: For example, Theorem \ref{thmcomprotshift} is applicable for computing  maximizing integrals of one-dimensional potentials that are of interest in the area ergodic optimization, see \cite{Je3} for an introduction to the subject. Furthermore, our result can be applied to obtain computability results for certain optimizing functions that were studied in the context of relative optimization by Garibaldi and Lopes  \cite{GL}.  Theorem \ref{thmcomprotshift} also applies to  the computation of classical rotation sets for certain toral homeomorphisms homotopic to the identity. We refer the interested reader to \cite{Z} to make the connection between these rotation sets and a symbolic system. Finally, Theorem \ref{thmcomprotshift} can be applied to the computation of barycenter sets, see, e.g., \cite{B,HO,Je1,Je2}.

Next, we discuss the computability of  the localized entropy.  One of the difficulties when attempting to compute $\cH(w)$ is that, at any given time, a Turing machine has only access to a finite amount of data associated with an approximation $\Phi_\epsilon$ rather than the precise data of the actual potential $\Phi$. To overcome this problem
we consider the minimal and maximal local entropy functions of $\Phi_\epsilon$  in the closed ball centered at $w$ and radius $r$, which we denote by $h^l_{\Phi_\epsilon}(w,r)$ and $h^u_{\Phi_\epsilon}(w,r)$, respectively, see Equations \eqref{defhu} and \eqref{defhl} in Section \ref{sec:computability:localized}. We show in Proposition \ref{thmrightleft} that if $w\in \inn \R(\Phi)$, then
\begin{equation}\label{eqhu}
\lim_{n\to\infty} h^l_{\Phi_{\epsilon_n}}(w, \alpha \epsilon_n)=\cH_\Phi(w_0)=\lim_{n\to\infty} h^u_{\Phi_{\epsilon_n}}(w, \alpha\epsilon_n)
\end{equation}
for all $\alpha\geq 1$. Moreover, it can be arranged so that $h^l_{\Phi_{\epsilon_n}}(w, \alpha \epsilon_n)$ is increasing and $h^u_{\Phi_{\epsilon_n}}(w, \alpha\epsilon_n)$ is decreasing.  As a consequence, we obtain the following result:
\begin{thmB}\label{thmB}
Let $f:X\to X$ be a continuous map on a compact metric space such that $\mu\mapsto h_\mu(f)$ is upper semi-continuous.  Then the global entropy function $(\Phi,w)\mapsto \cH_\Phi(w)$ is continuous on $\bigcup_{\Phi\in C(X,\bR^m)} \{\Phi\}\times {\rm int\,}\R(\Phi)$.
\end{thmB}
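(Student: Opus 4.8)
The plan is to deduce this statement (Theorem \ref{thmglobentcont}) directly from the sandwich identity \eqref{eqhu} of Proposition \ref{thmrightleft} by a squeezing argument; essentially all of the analytic content has already been absorbed into that proposition. Since $\bigcup_{\Phi\in C(X,\bR^m)}\{\Phi\}\times\inn\R(\Phi)$ inherits a metric from $C(X,\bR^m)\times\bR^m$, it suffices to establish sequential continuity. So I would fix $(\Phi_0,w_0)$ with $w_0\in\inn\R(\Phi_0)$ together with an arbitrary sequence $(\Phi_k,w_k)\to(\Phi_0,w_0)$ for which $w_k\in\inn\R(\Phi_k)$ for all $k$, and prove $\cH_{\Phi_k}(w_k)\to\cH_{\Phi_0}(w_0)$.

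The first step is to pick a single scale that controls both perturbations at once: set $\epsilon_k\eqdef\|\Phi_k-\Phi_0\|_\infty+\|w_k-w_0\|+\tfrac1k$. Then $\epsilon_k\to0$, the sequence $(\Phi_k)_k$ is an approximating sequence of $\Phi_0$ with tolerances $(\epsilon_k)_k$ (because $\|\Phi_k-\Phi_0\|_\infty<\epsilon_k$), and $\|w_k-w_0\|<\epsilon_k$, so that $w_k\in\overline B(w_0,\epsilon_k)\cap\R(\Phi_k)$ for every $k$.

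The second step is the observation that pins $\cH_{\Phi_k}(w_k)$ down. Since $w_k$ is an admissible point both for the minimum defining $h^l_{\Phi_k}(w_0,\epsilon_k)$ and for the maximum defining $h^u_{\Phi_k}(w_0,\epsilon_k)$ — this is exactly where $w_k\in\overline B(w_0,\epsilon_k)\cap\R(\Phi_k)$ enters, together with the definitions \eqref{defhu} and \eqref{defhl} — one gets
\[ h^l_{\Phi_k}(w_0,\epsilon_k)\ \le\ \cH_{\Phi_k}(w_k)\ \le\ h^u_{\Phi_k}(w_0,\epsilon_k)\qquad\text{for all }k. \]
Now apply \eqref{eqhu} with $\alpha=1$ to the potential $\Phi_0$ (legitimate since $w_0\in\inn\R(\Phi_0)$) and the approximating sequence $(\Phi_k)_k$: both outer quantities converge to $\cH_{\Phi_0}(w_0)$ as $k\to\infty$. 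By the squeeze theorem $\cH_{\Phi_k}(w_k)\to\cH_{\Phi_0}(w_0)$, and since $(\Phi_0,w_0)$ was arbitrary this proves that $(\Phi,w)\mapsto\cH_\Phi(w)$ is continuous on its domain.

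I do not expect a genuine obstacle inside this reduction; the delicate work all sits in Proposition \ref{thmrightleft} (that the balls $\overline B(w_0,\alpha\epsilon_k)$ eventually meet $\R(\Phi_k)$, that $h^l$ and $h^u$ trap $\cH_{\Phi_0}(w_0)$ from both sides, and that the approximation can be made monotone). The only point requiring a little care at this stage is the choice of $\epsilon_k$: it must dominate \emph{both} $\|\Phi_k-\Phi_0\|_\infty$ and $\|w_k-w_0\|$, so that one approximating sequence simultaneously handles the perturbation of the potential and the displacement of the base point, and the two effects decouple cleanly into the left and right halves of the sandwich.
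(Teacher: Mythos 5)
Your proof is correct and fills in precisely the derivation the paper leaves implicit when it says ``as a consequence of Theorem \ref{thmrightleft}'': choose a common scale $\epsilon_k$ dominating both $\|\Phi_k-\Phi_0\|_\infty$ and $\|w_k-w_0\|$ (with a $1/k$ cushion to keep the approximating-sequence inequality strict), note $w_k\in\overline B(w_0,\epsilon_k)\cap\R(\Phi_k)$ so that $h^l_{\Phi_k}(w_0,\epsilon_k)\le\cH_{\Phi_k}(w_k)\le h^u_{\Phi_k}(w_0,\epsilon_k)$, and squeeze using part (i) of that proposition with $\alpha=1$. This is the same route as the paper.
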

 This result indicates that for points in the interior of the rotation set, it may be sufficient for the computation of $\cH_\Phi$ to compute the localized entropy of an approximation $\Phi_\epsilon$. 
We then use this approach and apply methods from the thermodynamic formalism to compute the localized entropy of $\Phi_\epsilon$. In particular, we  consider potentials 
$\Phi_{\epsilon}$ for which the corresponding one-dimensional potential $v\cdot \Phi_{\epsilon}$ has a unique equilibrium state $\mu_{v\cdot \Phi_{\epsilon}}$ for all $v\in \bR^m$. We refer to Section 2.2 for the definitions and details. It is important to notice that we only require  $\Phi$  to be continuous. However, we have some flexibility in the construction of the approximating potentials, and, in particular, can require  H\"older or Lipschitz continuity, for which there exists a well-developed theory of equilibrium states. We prove the following general result:

\begin{thmC}\label{thmC}
Let $f:X\to X$ be a continuous map on a computable compact 
metric space $X$ such that $\mu\mapsto h_\mu(f)$ is upper semi-continuous. Let $\Phi:X\to \bR^m$ be
computable.  Suppose a computable function $r:\inn\R(\Phi)\rightarrow\bR^+$ is given such that
for all $w\in\inn\R(\Phi)$, $B(w,r(w))\subset\R(\Phi)$.
Suppose that there exists an approximating sequence $(\Phi_{\epsilon_n})_n$
of $\Phi$ such that for all $n\in\bN$ and all $v\in \bR^m$, the potential $v\cdot \Phi_{\epsilon_n}$ has a unique equilibrium state $\mu_{v\cdot \Phi_{\epsilon_n}}$. Moreover, assume  that the functions $n\mapsto \epsilon_n$, $(v,n)\mapsto h_{v\cdot \Phi_{\epsilon_n}}(f)$ and $(v,n)\mapsto \rv(\mu_{v\cdot \Phi_{\epsilon_n}})$ are computable. Then $\cH_\Phi$ is computable on $\inn \R(\Phi)$.
\end{thmC}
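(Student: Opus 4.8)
The plan is to combine the thermodynamic description of the localized entropy with the convex-geometric structure of the rotation set, and then feed everything through the approximation scheme of Proposition~\ref{thmrightleft}. The starting point is the variational/Legendre-transform picture: for a potential $\Psi$ for which $v\cdot\Psi$ has a unique equilibrium state for every $v\in\bR^m$, the map $v\mapsto\rv(\mu_{v\cdot\Psi})$ is a continuous parametrization of $\inn\R(\Psi)$ (it is the gradient of the pressure function $v\mapsto P(v\cdot\Psi)$), and the localized entropy is recovered by the Legendre-type duality $\cH_\Psi(\rv(\mu_{v\cdot\Psi}))=h_{v\cdot\Psi}(f)=P(v\cdot\Psi)-v\cdot\rv(\mu_{v\cdot\Psi})$. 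Thus, for each $\Phi_{\epsilon_n}$, knowing the computable data $(v,n)\mapsto h_{v\cdot\Phi_{\epsilon_n}}(f)$ and $(v,n)\mapsto\rv(\mu_{v\cdot\Phi_{\epsilon_n}})$ gives us, for any target point $w$ in the interior, a way to produce the value $h^l_{\Phi_{\epsilon_n}}(w,\alpha\epsilon_n)$ and $h^u_{\Phi_{\epsilon_n}}(w,\alpha\epsilon_n)$: one searches over a suitable finite net of parameters $v$, evaluates the pairs $(\rv(\mu_{v\cdot\Phi_{\epsilon_n}}),h_{v\cdot\Phi_{\epsilon_n}}(f))$, keeps those whose rotation vector lies in $\overline{B(w,\alpha\epsilon_n)}$, and takes the infimum (resp.\ supremum) of the corresponding entropies. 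The maximum over the net is a valid lower bound for $h^u$; to turn it into a genuine approximation one uses uniform continuity of $v\mapsto\rv(\mu_{v\cdot\Phi_{\epsilon_n}})$ and of the pressure on compact sets, together with the known modulus of continuity of $\cH$, to certify that a sufficiently fine net overshoots $\cH_\Phi(w)$ by at most a prescribed error.

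The second ingredient is locating, in a computable way, a compact parameter region that suffices. Here is where the hypothesis on the computable radius function $r$ enters: given $w\in\inn\R(\Phi)$, the ball $B(w,r(w))$ lies in $\R(\Phi)$, and for $n$ large (so that $\epsilon_n$ is small relative to $r(w)$) the ball $\overline{B(w,\alpha\epsilon_n)}$ is comfortably inside $\R(\Phi)$, hence inside $\inn\R(\Phi_{\epsilon_n})$ as well, by a standard perturbation estimate $\R(\Phi_{\epsilon_n})\supseteq\R(\Phi)-\epsilon_n\overline{B(0,1)}$ in Hausdorff distance (each invariant measure changes its rotation vector by at most $\epsilon_n$ when $\Phi$ is replaced by $\Phi_{\epsilon_n}$). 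Because $v\mapsto\rv(\mu_{v\cdot\Phi_{\epsilon_n}})$ is a proper continuous surjection onto $\inn\R(\Phi_{\epsilon_n})$ whose image of a large ball $\{|v|\le T\}$ exhausts any prescribed compact subset of the interior as $T\to\infty$, one can bound a priori the size of $T=T(w,n)$ needed so that the net over $\{|v|\le T\}$ already sees every equilibrium state with rotation vector in $\overline{B(w,\alpha\epsilon_n)}$. Making $T$ explicitly computable from the given data is a routine but slightly delicate estimate: it follows from a lower bound on how fast $|\rv(\mu_{v\cdot\Phi_{\epsilon_n}})|$ moves toward the boundary as $|v|\to\infty$, which in turn comes from strict convexity of the pressure on the interior together with the computable bound on $r$.

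With these pieces in place, the algorithm is: on input $w\in\inn\R(\Phi)$ (presented by a fast Cauchy sequence) and target accuracy $\delta>0$, compute $n$ large enough (using computability of $n\mapsto\epsilon_n$ and of $r$) that the monotone quantities $h^l_{\Phi_{\epsilon_n}}(w,\alpha\epsilon_n)$ and $h^u_{\Phi_{\epsilon_n}}(w,\alpha\epsilon_n)$ from \eqref{eqhu} are within $\delta/2$ of $\cH_\Phi(w)$ and within $\delta/2$ of each other; then compute each of these two numbers to accuracy $\delta/2$ by the finite-net search described above, using the computable functions $(v,n)\mapsto h_{v\cdot\Phi_{\epsilon_n}}(f)$ and $(v,n)\mapsto\rv(\mu_{v\cdot\Phi_{\epsilon_n}})$ on the computable compact region $\{|v|\le T(w,n)\}$, and output either value. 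Correctness is immediate from \eqref{eqhu}; what must be checked is that the modulus implicit in \eqref{eqhu} (i.e.\ how large $n$ must be as a function of $w$ and $\delta$) is itself computable, which it is because it depends only on the computable data $r$, $\epsilon_n$, and the uniform continuity moduli of the pressure and of $\cH$, all of which are effectively available.

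\textbf{Main obstacle.} The delicate point is not the thermodynamic identity $\cH_\Psi(\rv(\mu_{v\cdot\Psi}))=P(v\cdot\Psi)-v\cdot\rv(\mu_{v\cdot\Psi})$, which is classical, but the \emph{uniformity} needed to make the search finite: one must pin down a computable bound $T(w,n)$ on the parameter $v$ such that the image of $\{|v|\le T(w,n)\}$ under $v\mapsto\rv(\mu_{v\cdot\Phi_{\epsilon_n}})$ already contains a neighborhood of $\overline{B(w,\alpha\epsilon_n)}$, and simultaneously control how the (a priori unknown) modulus of continuity of the parametrization degrades as $n\to\infty$. This is where the computable radius function $r$ and the perturbation estimate $d_H(\R(\Phi),\R(\Phi_{\epsilon_n}))\le\epsilon_n$ do the essential work, converting soft interior-of-a-convex-set statements into effective quantitative ones.
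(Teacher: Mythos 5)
Your outline follows the paper's: reduce the problem to computing the functions $(n,s,w)\mapsto h^l_{\Phi_{\epsilon_n}}(w,2^{-s})$ and $(n,s,w)\mapsto h^u_{\Phi_{\epsilon_n}}(w,2^{-s})$ (which the paper packages as Theorem \ref{thmHcompgen}, built on Proposition \ref{prop101} and Theorem \ref{thmrightleft}, i.e.\ your Equation~\eqref{eqhu}), and compute those by a finite-net search over the parameter $v$ using the computable data $(v,n)\mapsto h_{\mu_{v\cdot\Phi_{\epsilon_n}}}(f)$ and $(v,n)\mapsto\rv(\mu_{v\cdot\Phi_{\epsilon_n}})$, reconciled through moduli of continuity. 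In broad strokes this is the same proof.

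The gap is precisely the step you label a ``routine but slightly delicate estimate'': producing a computable bound $T$ so that the net $\{\|v\|\le T\}$ already accounts for every rotation vector in $\overline{B}(w_0,2^{-s})$. Your proposed source for this bound --- strict convexity of the pressure, giving a rate at which $\rv(\mu_{v\cdot\Phi_{\epsilon_n}})$ approaches $\partial\R(\Phi_{\epsilon_n})$ as $\|v\|\to\infty$ --- does not hold up under the stated hypotheses. Uniqueness of equilibrium states gives only differentiability of $v\mapsto P_{\rm top}(v\cdot\Phi_{\epsilon_n})$, not strict convexity; and even granting strict convexity, it supplies no quantitative rate, hence no computable $T$. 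What the paper actually proves and uses is Proposition \ref{prop:vbound}: if $\mu_{v\cdot\Phi}$ is an equilibrium state for $v\cdot\Phi$ whose rotation vector lies at distance $r$ from $\partial\R(\Phi)$, then $\|v\|\le\frac{2}{r}h_{\rm top}(f)$. The argument is a short comparison in the variational principle: pick $\nu$ with $\rv(\nu)$ on the supporting face $F_v(\Phi)$; since $v\cdot\bigl(\rv(\nu)-\rv(\mu_{v\cdot\Phi})\bigr)\ge r\|v\|$, a too-large $\|v\|$ would force $h_\nu(f)+\int v\cdot\Phi\,d\nu>P_{\rm top}(v\cdot\Phi)$, contradicting the variational principle. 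Since $h_{\rm top}(f)=h_{\mu_{0\cdot\Phi_{\epsilon_n}}}(f)$ is computable by hypothesis and the radius function $r$ is computable, this gives a computable $T$. You need this explicit bound; the finite-net search is not finite without it.

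Two secondary points. First, to ensure $\overline{B}(w_0,\alpha\epsilon_n)\subset\inn\R(\Phi_{\epsilon_n})$ for $n$ large, the paper uses the perturbation-stability construction of Lemma \ref{lemhs} rather than an erosion/Hausdorff-distance argument, which is cleaner to carry out effectively. Second, the moduli of continuity you invoke should be those of the computable maps $v\mapsto\rv(\mu_{v\cdot\Phi_{\epsilon_n}})$ and $v\mapsto h_{\mu_{v\cdot\Phi_{\epsilon_n}}}(f)$ restricted to the computable compact set $\overline{B}(0,T)$ (Lemma \ref{globalmodulusofcontinuity}), together with a covering of $\overline{B}(0,T)$ (Lemma \ref{lem:covering}); you do not need a separate modulus for $\cH$ itself.
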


We note that the condition of the uniqueness of the equilibrium states is known to hold for
several classes of systems and potentials including Axiom A systems, subshifts
of finite type, and expansive homeomorphisms with specification and
H\"older continuous potentials. 
Recently, there has been significant progress
in generalizing uniqueness results for equilibrium states to wider classes of
shift transformations, non-uniformly hyperbolic maps, and  flows. We refer to the survey article \cite{CP} for further references
and details.

It turns out that Theorem \ref{thmmaingenentcomp} is applicable to subshifts of finite type and computable potentials $\Phi$. One advantage when dealing with SFTs  is that we can work with locally constant computable  approximations $\Phi_\epsilon$. For these potentials, we are able to establish the assumptions in Theorem \ref{thmmaingenentcomp}.  We conclude that  the localized entropy $\cH(w)$
is computable in the interior of $\R(\Phi)$, see Theorem \ref{Hintcomp}.
To the best of our knowledge, Theorems \ref{thmmaingenentcomp} and  \ref{Hintcomp} represent the first results that establish computability of the entropy beyond computing the topological entropy or  measure-theoretic entropy of certain specific invariant measures.  Our proof of Theorem \ref{thmmaingenentcomp} relies on Equation \eqref{eqhu} in a crucial way. It turns out that the right-hand-side identity in Equation \eqref{eqhu} remains true for boundary points of the rotation set.  Our proof, however, of the left-hand side identity does not carry over to the boundary. Obviously, this does not imply that the left-hand side identity of Equation \eqref{eqhu} does not hold. However, we are able to prove the following:

\begin{thmD}
Let $f:X\to X$ a one-sided full shift over an alphabet with 4 symbols. Then, there exists a potential $\Phi\in C(X, \bR^2)$  and a sequence
of locally constant potentials $\Phi_{\epsilon_n}:X\to \bR^2$ with $\lim_{n\to\infty} ||\Phi-\Phi_{\epsilon_n}||_\infty =0$ such that the following holds.
\begin{itemize}
\item $\partial \R(\Phi)$ is an infinite polygon with a smooth exposed point $w_\infty$ and
\item 
$0=\lim_{n\to \infty}h^l_{\Phi_{\epsilon_n}}(w_\infty, \epsilon_n) < \cH(w_\infty)=\lim_{n\to \infty}h^u_{\Phi_{\epsilon_n}}(w_\infty, \epsilon_n)=\log 2.$
\end{itemize}
\end{thmD}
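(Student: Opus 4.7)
The plan is to build an explicit continuous potential $\Phi$ on $X=\{1,2,3,4\}^{\bN}$ and a sequence $\Phi_{\epsilon_n}$ of locally constant approximations realizing all four claims of the theorem. The geometric picture is that $\R(\Phi)$ will be the closed convex hull of a smooth exposed point $w_\infty=(0,0)$---realized precisely by measures supported on the $2$-symbol subshift $X_{12}=\{1,2\}^{\bN}$ (which carries topological entropy $\log 2$)---together with a countable sequence of extreme points $\{v_k^\pm\}_{k\geq 1}$ each realized by a single periodic orbit measure of entropy $0$. The vertices $v_k^\pm$ will be placed on a strictly convex curve tangent to the horizontal axis at $w_\infty$ (for instance a parabola opening upward), so they form genuine vertices of an infinite polygon and $w_\infty$ is a smooth exposed point with unique horizontal supporting line.

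With periodic points $\pi_k^+=\overline{1^k 3}$ and $\pi_k^-=\overline{2^k 4}$ of period $k+1$ and orbit measures $\mu_k^\pm$, I would construct $\Phi=(\Phi_1,\Phi_2)\in C(X,\bR^2)$ so that: (i) $\Phi\equiv(0,0)$ on $X_{12}$, hence $\rv(\mu)=w_\infty$ for all $\mu\in\cM(X_{12})$; (ii) $\Phi_2(x)>0$ for $x\in X\setminus X_{12}$, making $y=0$ a supporting line of $\R(\Phi)$ at $w_\infty$ with exposed face exactly $\cM(X_{12})$; (iii) $\rv(\mu_k^\pm)=v_k^\pm$ for each $k$; and (iv) $\R(\Phi)$ equals the closed convex hull $C$ of $\{w_\infty\}\cup\{v_k^\pm\}$. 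The values of $\Phi$ are prescribed on the closed $f$-invariant set $X_{12}\cup\bigcup_k\mathrm{orbit}(\pi_k^\pm)$ (consistently with $\pi_k^\pm\to X_{12}$ and $\Phi|_{X_{12}}\equiv 0$) and extended continuously to $X$ via the realization techniques of \cite{KW1}, with (iv) verified by a direction-by-direction analysis of the support function $v\mapsto \sup_\mu v\cdot \rv(\mu)$. From (ii) we get $\cM_\Phi(w_\infty)=\cM(X_{12})$, and hence $\cH(w_\infty)=h_{\mathrm{top}}(X_{12})=\log 2$.

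For each $n\geq 1$, I would define $\Phi_{\epsilon_n}$ locally constant on cylinders of length $n+2$ satisfying: (a) $\Phi_{\epsilon_n}\equiv(0,0)$ on each cylinder contained in $X_{12}$; (b) $\|\Phi-\Phi_{\epsilon_n}\|_\infty\leq\epsilon_n$ with $\epsilon_n$ comparable to $|v_n^+-w_\infty|$; and (c) $\Phi_{\epsilon_n}$ matching $\Phi$ on every cylinder meeting $\mathrm{orbit}(\pi_n^+)$, so $\rv_{\Phi_{\epsilon_n}}(\mu_n^+)=v_n^+$ remains a vertex of $\R(\Phi_{\epsilon_n})$ uniquely exposed by $\mu_n^+$. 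Then $v_n^+\in B(w_\infty,\epsilon_n)\cap\R(\Phi_{\epsilon_n})$ with localized entropy $h_{\mu_n^+}(f)=0$, giving $\lim_n h^l_{\Phi_{\epsilon_n}}(w_\infty,\epsilon_n)=0$. By (a), the Bernoulli $(\tfrac12,\tfrac12)$ measure on $X_{12}$ has $\Phi_{\epsilon_n}$-rotation vector $w_\infty$ and entropy $\log 2$, so $h^u_{\Phi_{\epsilon_n}}(w_\infty,\epsilon_n)\geq\log 2$; the matching upper bound $\lim_n h^u_{\Phi_{\epsilon_n}}(w_\infty,\epsilon_n)=\cH(w_\infty)=\log 2$ follows from the right-hand identity in \eqref{eqhu}, which holds at boundary points as noted preceding the theorem.

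The principal obstacle is making (i)--(iv) jointly consistent with continuity of $\Phi$: in particular, the strict positivity $\Phi_2>0$ on $X\setminus X_{12}$ is not automatic from merely prescribing $\rv(\mu_k^\pm)$, and requires a careful joint choice of the sequence $v_k^\pm$ and of $\Phi$ off the prescribed orbits. I would address this by choosing $(v_k^+)_2$ to decrease slowly enough that the partial sums $(k+1)(v_k^+)_2$ appearing in the orbit-averaged integrals remain increasing in $k$, while keeping the ratio $(v_k^+)_2/|(v_k^+)_1|\to 0$ to ensure smoothness at $w_\infty$. A second difficulty is preserving the single-orbit exposure of $v_n^+$ under the locally constant perturbation: if $v_n^+$ came to lie on a flat face of $\R(\Phi_{\epsilon_n})$ containing a positive-entropy measure, the $h^l=0$ claim would fail. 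Matching $\Phi_{\epsilon_n}$ to $\Phi$ on the full cylindrical neighborhood of $\pi_n^+$ while strictly separating $\mu_n^+$ from competing candidates on other cylinders resolves this, at the cost of careful bookkeeping that fixes $\epsilon_n$.
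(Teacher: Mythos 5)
Your overall geometric picture matches the paper's: a continuous $\Phi$ vanishing on a full $2$-letter sub-alphabet (where the entropy $\log 2$ lives), countably many entropy-zero vertices accumulating at the exposed point $w_\infty=(0,0)$ along a strictly convex curve to enforce smoothness, and locally constant approximations. But the mechanism you use for the crucial estimate $h^l_{\Phi_{\epsilon_n}}(w_\infty,\epsilon_n)=0$ is genuinely different, and the key step in your version is left unproven.

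You ask the \emph{period-$(n+1)$ orbit measure} $\mu_n^+$ of $\pi_n^+=\overline{1^n3}$ to be the unique measure with $\rv_{\Phi_{\epsilon_n}}(\mu_n^+)=v_n^+$. That is exactly what must be proved, and the argument you gesture at (``strictly separating $\mu_n^+$ from competing candidates on other cylinders'') is the whole difficulty. The rotation vector $v_n^+$ of a period-$(n+1)$ orbit is an \emph{average} of $n+1$ values of $\Phi_{\epsilon_n}$ and in general is not itself a value of $\Phi_{\epsilon_n}$. Therefore $v_n^+$ need not be an extreme point of $\conv(\Phi_{\epsilon_n}(X))$, and the standard mechanism --- extremality of the value forces the support of any realizing measure into the preimage $\Phi_{\epsilon_n}^{-1}(v_n^+)$ --- is not available to you. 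To obtain uniqueness of $\mu_n^+$ you would need to argue that the linear functional exposing $v_n^+$ on $\R(\Phi_{\epsilon_n})$ has a unique maximizing measure, which is an ergodic-optimization uniqueness statement in its own right, not ``careful bookkeeping.''

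The paper avoids this entirely by using the \emph{fixed point} $x=\mathcal{O}(1)\in Y_1(\infty)$. Since $\mu_x=\delta_x$, its rotation vector under the truncation $\Phi_{\epsilon_n}$ is literally the \emph{single value} $\Phi_{\epsilon_n}(x)=(x_{K+1-\lambda},\ell_1(x_{K+1-\lambda}))$, and the construction of $\Phi$ and $\Phi_{\epsilon_n}$ guarantees this value is extreme in $\conv(\Phi_{\epsilon_n}(X))$ (minimal first coordinate; among those, maximal second). Extremality then forces any realizing measure to be supported on $\Phi_{\epsilon_n}^{-1}(\Phi_{\epsilon_n}(x))=\cC_K(x)$, and the shell decomposition $\cC_K(x)=\{x\}\cup\bigcup_{k>K}A_k$ with $A_k=\cC_{k-1}(x)\setminus\cC_k(x)$ plus $f$-invariance shows the only invariant measure there is $\delta_x$. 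A second, related, issue: in the paper the truncation is engineered so that $w_\infty=(0,0)$ is \emph{not} in $\R(\Phi_{\epsilon_n})$ at all (the fixed point's rotation vector has been pushed to a strictly positive value), which is what makes the nearby extreme point an isolated entropy-$0$ target; in your (a) you keep $\Phi_{\epsilon_n}\equiv(0,0)$ on $X_{12}$, so $(0,0)$ itself remains a high-entropy point of $\R(\Phi_{\epsilon_n})$ and does not help with $h^l$, placing all the weight on the unproven uniqueness of $\mu_n^+$. Reworking your construction so that the fixed point $\overline{1}$ (rather than a period-$(n+1)$ orbit) is the witness, and so that the truncation moves its image to an extreme value of $\conv(\Phi_{\epsilon_n}(X))$, would repair the argument and is essentially what the paper does.

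A secondary remark: you delegate the existence of $\Phi$ satisfying (i)--(iv) to the realization machinery of \cite{KW1}, but that gives a potential with a prescribed rotation set, not one with a prescribed identification $\cM_\Phi(w_\infty)=\cM(X_{12})$ together with prescribed vertex-realizing orbits. The paper builds $\Phi$ explicitly on nested shells $Y_i(k)$ so that all of these properties can be read off; this explicitness is also what makes the later preimage computation $\Phi_{\epsilon_n}^{-1}(\cdot)=\cC_K(x)$ possible. Your sign convention (supporting line $y=0$ versus the paper's $x=0$, one $2$-letter block $X_{12}$ versus the paper's two blocks $Y_1(\infty)\cup Y_2(\infty)$) is a harmless cosmetic difference.
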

 One consequence of Theorem \ref{thmfin} is that  one can  in general not extend the continuity of global entropy function $(\Phi,w)\mapsto \cH_\Phi(w)$, see Theorem \ref{thmglobentcont}, to the  boundary of the rotation set. This  suggests that localized entropy is, in general, not computable at the boundary of rotation sets.

 \subsection{Outline of Paper}
 
This paper is organized as follows: In Section \ref{sec:2}, we review some
basic concepts from  computational analysis and the thermodynamic formalism.
In Section 3, we discuss the computability of rotation sets for computable maps on compact computable metric spaces.
Section 4 is devoted to the study of the localized entropy function for continuous maps on compact metric spaces.
 In Section 5, we apply the results from Section 3 to subshifts of finite type and establish the computability of their
 rotations sets. Section 6 is devoted to the proof of Theorem \ref{Hintcomp}. Finally, in Section 7, we construct an example
 which shows that the global entropy function is, in general, discontinuous at the boundary of the rotation sets.

\
\section{Preliminaries}\label{sec:2}

In this section, we discuss the relevant background material. We continue to use the notation from Section \ref{Section:Notation}.

\subsection{Basics from computability theory}\label{sec:compute:basic}
We are interested in the feasibility of computational experiments on rotation sets and entropies.  Computability theory allows us to guarantee the correctness and accuracy of our computational experiments.  In this section, we recall that a computer can approximate only countably many real numbers, so, without an accuracy guarantee, a computer experiment might miss interesting behaviors away from this collection of approximable numbers.

For a more thorough discussion of these topics see, e.g.,  \cite{BHW, BY,GHR, K,RW}.  We use different, but closely related, definitions to those in \cite{BY} and \cite{GHR} as well as mirroring their notation in order to allow for cross referencing.  Throughout this discussion, we use a bit-based computation model, such as a Turing machine, as opposed to a real RAM model \cite{S} (where these questions are trivial).  One can think of the set of Turing machines as a particular, countable set of functions; we denote $\phi(x)$ as the output of the Turing machine\footnote{We exclusively use the upper-case $\Phi$ for potentials and lower-case $\phi$ for Turing machines.  Both notations are fairly common in the respective literature.} $\phi$ on input $x$.

We begin by defining the spaces that we study as well as the computable points in these spaces.

\begin{definition}[cf {\cite[Definition 2.2]{GHR}}]
A {\em computable metric space} $(X,d_X, \mathcal{S}_X )$ is a separable metric space $(X,d_X)$ together with a dense sequence $\mathcal{S}_X = (s_1,s_2,\dots)$ with $s_i\in X$, i.e., an injective function $\mathcal{S}_X: \bN \rightarrow X$ with dense image.  
\end{definition}
If the metric or dense subset is clear from context, we may drop those from the notation.

\begin{definition}[cf {\cite[Definition 1.2.1]{BY}}]\label{def:computable}
Let $X$ be a computable metric space.  An {\em oracle} for $\alpha\in X$ is a function $\phi$ such that on input $n$, $\phi(n)$ is a natural number so that $d_X(\alpha,s_{\phi(n)})<2^{-n}$.  Moreover, we say $\alpha$ is {\em computable} if there is a Turing machine $\phi$ which is an oracle for $\alpha$.
\end{definition}

To make this definition more explicit, we begin with an example for real spaces.  For computational purposes, we often set $\mathcal{S}_{\mathbb{R}}$ to be the rationals or the dyadic numbers, $\mathbb{Z}\left[\frac{1}{2}\right]$, since both can be represented exactly on a computer.  In our discussions we always use the rational points $\mathbb{Q}^m$ in $\mathbb{R}^m$, such as in the following one-dimensional example:

\begin{example}
Let $X=\mathbb{R}$ and $\mathcal{S}_{\mathbb{R}}=\mathbb{Q}$.  For a real number $\alpha$, an oracle for $\alpha$ is a function $\phi$ such that on input $n$, $\phi(n)$ is a rational number so that $|\alpha-\phi(n)|<2^{-n}$.
\end{example}

Since there are only countably many Turing machines, there are only countably many computable points\footnote{There are weaker notions of computability, e.g., where $s_{\phi(n)}\rightarrow \alpha$ without a guarantee on the speed of convergence.  Many of our theorems can be stated with weaker hypotheses to allow  for this and other types of computability.  We leave the details to the interested reader.} in any $X$.  In the case of real numbers, the computable numbers include the rational and algebraic numbers as well as some transcendental numbers, such as $e$ and $\pi$.  

We extend the notion of computability to functions.  For functions, it makes sense to focus on inputs which can be arbitrarily approximated, e.g., computable inputs.

\begin{definition}[cf {\cite[Definition 1.2.5]{BY}}]\label{def:computablefunction}
Let $X$ and $(Y,(t_1,t_2,\dots))$ be computable metric spaces where $t_i\in Y$ is the fixed dense sequence.  Suppose that $S\subset X$.  A function $f:S\rightarrow Y$ is {\em computable} if there is a Turing machine $\psi$ such that for any oracle $\phi$ for $\alpha\in S$, $d_Y(t_{\psi(\phi,n)},f(\alpha))< 2^{-n}.$
\end{definition}

For example, if $S\subset\mathbb{R}^n$ and $g=(g_1,\dots,g_m):S\rightarrow\mathbb{R}^m$, then $g$ is computable iff each $g_i$ is computable.  Observe that, in this definition, $\alpha$ does not need to be computable, i.e., the oracle $\phi$ does not need to be a Turing machine.  In the case where $\alpha$ is computable, however, then $f(\alpha)$ is computable because $\psi(\phi,n)$ is an oracle Turing machine for $f(\alpha)$.  It is straight-forward to see that the composition of computable functions is computable because the output of one Turing machine can be used as the input approximation for subsequent machines.  

\begin{definition}
Let $X$ and $Y$ be computable metric spaces as above.  In this paper, we often consider {\em computable} sequences of points or functions.  Such sequences can be written as functions where one input is the index.  In particular, a sequence $(\alpha_n)_n$ of points in $X$ is computable if it is computable as a function $\bN\rightarrow X$, i.e., there exists a Turing machine $\psi$ such that $\psi(n,k)$ is an approximation to $\alpha_n$ so that $d_X(\alpha_n,s_{\psi(n,k)})<2^{-k}$.

A sequence of computable functions $(\Phi_n)_n$ is a computable sequence if it is computable as a function $\bN\times X\rightarrow Y$, i.e., if there exists a Turing machine $\psi$ such that for any oracle $\phi$ for $\alpha\in X$, $\psi(\phi,n,k)$ is an approximation to $\Phi_n(\alpha)$ so that $d_Y(\Phi_n(\alpha),t_{\psi(\phi,n,k)})<2^{-k}$.

We call a computable sequence of points or functions which is convergent a {\em computable, convergent sequence}.
\end{definition}

We observe that in the definition above, each $\alpha_n$ is computable because for each $\alpha_n$ has an oracle Turing machine $\psi_n(k)=\psi(n,k)$.  A similar argument shows that $\Phi_n$ in the definition above is a computable function.  We observe that the existence of a computable, convergent sequence converging to $\alpha\in X$ is different from the existence of a convergent sequence of computable points $(\alpha_n)_n$ converging to $\alpha$.  In particular, since the computable points are dense in $X$, every $\alpha\in X$ has such a convergent sequence.  Moreover, for each $\alpha_n\in X$, there is a, possibly distinct, Turing machine $\phi_n$ for each $\alpha_n$.  Unless the $\alpha_n$'s are generated in a uniform way via a single Turing machine, as above, then one would need an infinite amount of information, i.e., Turing machines for all of the $\alpha_n$'s, to be able to work with the sequence.  Note also that $\alpha\in X$ is computable if and only if there is a computable, convergent sequence converging to $\alpha$.  

Since the definition for a computable function uses any oracle for $\alpha$ and applies even when $\alpha$ is not computable, we can conclude that for any sufficiently close approximation $x$ to $\alpha$, $f(x)$ approximates the value of $f(\alpha)$, i.e., $f$ is continuous.

\begin{lemma}[cf {\cite[Theorem 1.5]{BY}}]\label{lem:continuous}
Let $X$ and $Y$ be computable metric spaces, $S\subset X$, and $f:S\rightarrow Y$.  If $f$ is computable, then $f$ is continuous.
\end{lemma}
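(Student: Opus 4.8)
The plan is to establish pointwise continuity of $f$ directly from Definition \ref{def:computablefunction}, exploiting the elementary fact that a halting oracle Turing machine reads only finitely many values of its oracle. Let $\psi$ be a Turing machine witnessing that $f$ is computable, fix $\alpha\in S$ and $\e>0$, and choose $n\in\bN$ with $2^{1-n}<\e$. I must produce $\delta>0$ so that every $\beta\in S$ with $d_X(\alpha,\beta)<\delta$ satisfies $d_Y(f(\alpha),f(\beta))<\e$.

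First I would fix an oracle $\phi$ for $\alpha$ with a built-in factor of $2$ of extra precision, namely $d_X(\alpha,s_{\phi(k)})<2^{-k-1}$ for every $k$; such a $\phi$ exists by density of $\mathcal S_X$ (equivalently, reindex any oracle $\phi_0$ for $\alpha$ as $\phi(k)=\phi_0(k+1)$). Running $\psi$ on input $(\phi,n)$ produces a halting computation, which therefore queries $\phi$ at only finitely many inputs; let $N$ be the largest such input (take $N=0$ if there are none) and set $\delta=2^{-N-1}>0$.

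Now take $\beta\in S$ with $d_X(\alpha,\beta)<\delta$ and fix any oracle $\phi_\beta$ for $\beta$ (again using density of $\mathcal S_X$). Define $\phi'(k)=\phi(k)$ for $k\le N$ and $\phi'(k)=\phi_\beta(k)$ for $k>N$. Then $\phi'$ is an oracle for $\beta$: for $k>N$ this is immediate, and for $k\le N$,
\[
d_X\big(\beta,s_{\phi(k)}\big)\le d_X(\beta,\alpha)+d_X\big(\alpha,s_{\phi(k)}\big)<2^{-N-1}+2^{-k-1}\le 2^{-k}.
\]
Since $\phi$ and $\phi'$ agree on every input $\le N$, and the run of $\psi$ on $(\phi,n)$ consulted the oracle only at inputs $\le N$, the run of $\psi$ on $(\phi',n)$ proceeds identically step for step, so $\psi(\phi',n)=\psi(\phi,n)$. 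Applying the defining inequality of $\psi$ to the oracle $\phi$ for $\alpha$ and to the oracle $\phi'$ for $\beta$, and using $t_{\psi(\phi,n)}=t_{\psi(\phi',n)}$, we get
\[
d_Y\big(f(\alpha),f(\beta)\big)\le d_Y\big(f(\alpha),t_{\psi(\phi,n)}\big)+d_Y\big(t_{\psi(\phi',n)},f(\beta)\big)<2^{-n}+2^{-n}=2^{1-n}<\e,
\]
which is the required estimate, establishing continuity of $f$ at $\alpha$.

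The only genuinely delicate ingredient is this "finite use" principle for terminating oracle computations, combined with the bookkeeping that lets a nearby point $\beta$ inherit an oracle agreeing with $\phi$ on the prefix actually queried; the extra factor of $2$ built into the precision of $\phi$ is exactly the slack that makes this inheritance valid. I would also emphasize that neither $\alpha$ nor $\beta$ need be computable—the hypothesis on $\psi$ quantifies over all oracles, not only Turing-machine oracles—so the argument yields continuity of $f$ on all of $S$, not merely at its computable points.
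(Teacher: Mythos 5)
Your proof is correct and follows the same route the paper indicates: the paper cites this result to Braverman--Yampolsky and, in the discussion immediately following the lemma, sketches exactly your ``finite use'' argument (a halting oracle computation queries only finitely many oracle values, so a nearby $\beta$ admits an oracle agreeing on the queried prefix, forcing the same output, and the triangle inequality finishes). Your write-up merely makes the bookkeeping explicit, e.g.\ the extra factor of $2$ of oracle precision needed so that the hybrid $\phi'$ is genuinely an oracle for $\beta$, which is a correct and standard way to close that gap.
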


This continuity of $f$ in the lemma above can be made more precise as follows: Fix $n\in\mathbb{N}$.  Since, in the definition of a computable function, $\psi$ can be applied to any oracle $\phi$ for $\alpha$, the correctness of the output is dependent only on the accuracy to which $\phi$ is computed within the Turing machine $\psi$.  The accuracy to which $\phi$ is computed is finite since the algorithm terminates.  Hence, if $\beta$ is sufficiently close to $\alpha$, then there is an oracle $\phi'$ for $\beta$ which agrees with $\phi$ up to the computed accuracy, and the output of $\psi(\phi,n)$ equals the output for $\psi(\phi',n)$.  By the upper bound on the error of the output of $\psi$, we can bound the error between $f(\alpha)$ and $f(\beta)$.  In particular, we can use the maximum precision to which the oracle $\phi$ is queried for the following result:

\begin{lemma}[cf {\cite[Theorem 1.6]{BY}}]\label{localmodulusofcontinuity}
Let $X$ and $Y$ be computable metric spaces, $S\subset X$, and $f:S\rightarrow Y$.  If $f$ is computable, then there is a computable function $g:S\times\mathbb{N}\rightarrow \mathbb{N}$ such that if $\alpha,\beta\in S$ and $d_X(\alpha,\beta)<2^{-g(\alpha,k)}$, then $d_Y(f(\alpha),f(\beta))<2^{-k}$.  In this case, we say that $f$ has a {\em computable local modulus of continuity.}  Since $g$ is computable, there exists a Turing machine $\mu$ such that for any oracle $\phi$ for $\alpha$, $g(\alpha,k)=\mu(\phi,k)$.
\end{lemma}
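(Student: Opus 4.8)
The plan is to make precise the informal argument sketched in the paragraph preceding the lemma: a terminating oracle computation inspects its oracle at only finitely many inputs, so it suffices to quantify how large a perturbation of $\alpha$ still leaves the inspected oracle values valid for the perturbed point.

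First I would fix a Turing machine $\psi$ witnessing the computability of $f$, so that for every oracle $\phi$ for a point $\alpha\in S$ and every $n$ one has $d_Y(t_{\psi(\phi,n)},f(\alpha))<2^{-n}$. Given an oracle $\phi$ for $\alpha$ and an input $k$, I would first pass to the \emph{sharpened} oracle $\phi'$ defined by $\phi'(n)=\phi(n+1)$; since $d_X(\alpha,s_{\phi'(n)})=d_X(\alpha,s_{\phi(n+1)})<2^{-(n+1)}$, this $\phi'$ is still an oracle for $\alpha$, but with a margin to spare. Then simulate $\psi(\phi',k+1)$. This computation halts, and during its execution it queries $\phi'$ at only finitely many inputs; let $m$ be the largest such input (put $m=0$ if no query occurs). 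Define $g(\alpha,k):=m+1$, and let $\mu$ be the Turing machine carrying out exactly this procedure: $\mu(\phi,k)$ builds $\phi'$ from $\phi$, runs $\psi(\phi',k+1)$, records the queries, and returns (largest query)$+1$. Every step terminates, so $g$ is a computable function of $(\alpha,k)$, and $g(\alpha,k)=\mu(\phi,k)$ as claimed.

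Next I would verify the modulus property. Suppose $\beta\in S$ with $d_X(\alpha,\beta)<2^{-g(\alpha,k)}=2^{-(m+1)}$. Build an oracle $\phi''$ for $\beta$ that agrees with $\phi'$ on all inputs $n\le m$ and, for $n>m$, returns any index $j$ with $d_X(\beta,s_j)<2^{-n}$ (such indices exist because $\mathcal{S}_X$ is dense in $X$, and oracles need not be computable). For $n\le m$ this is legitimate since $d_X(\beta,s_{\phi'(n)})\le d_X(\beta,\alpha)+d_X(\alpha,s_{\phi'(n)})<2^{-(m+1)}+2^{-(n+1)}\le 2^{-n}$. Because $\psi(\phi',k+1)$ inspects its oracle only at inputs $\le m$ and is deterministic, $\psi(\phi'',k+1)$ performs the identical computation and returns the same index, so $t_{\psi(\phi'',k+1)}=t_{\psi(\phi',k+1)}$. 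Since $\phi'$ is an oracle for $\alpha$ and $\phi''$ is an oracle for $\beta$, the defining property of $\psi$ gives $d_Y(t_{\psi(\phi',k+1)},f(\alpha))<2^{-(k+1)}$ and $d_Y(t_{\psi(\phi'',k+1)},f(\beta))<2^{-(k+1)}$, and the triangle inequality yields $d_Y(f(\alpha),f(\beta))<2^{-k}$.

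The one point requiring care — and the main, if modest, obstacle — is the bookkeeping with oracle margins: one must sharpen $\phi$ to $\phi'$ and pad the output precision from $k$ to $k+1$ precisely so that the two triangle-inequality estimates $d_X(\beta,s_{\phi'(n)})<2^{-n}$ and $d_Y(f(\alpha),f(\beta))<2^{-k}$ close with the right constants; omitting either sharpening step costs a factor of $2$. Everything else is routine: finiteness of the query set is automatic from the termination of $\psi$ on a valid oracle, and existence of the patched oracle $\phi''$ is immediate from the density of $\mathcal{S}_X$.
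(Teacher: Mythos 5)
Your proof is correct and follows the same approach as the paper's (informal) argument in the paragraph preceding the lemma: a terminating oracle computation queries its oracle at only finitely many inputs, so any $\beta$ close enough to $\alpha$ admits an oracle agreeing with $\phi$ on the queried inputs, forcing identical output, and the triangle inequality finishes. The paper itself gives only a sketch and defers to Braverman--Yampolsky; your write-up correctly supplies the two bookkeeping steps that the sketch elides (sharpening $\phi$ to $\phi'(n)=\phi(n+1)$ so that $d_X(\beta,s_{\phi'(n)})<2^{-n}$ closes, and running $\psi$ at precision $k+1$ so that the final triangle inequality gives $2^{-k}$).
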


In some cases, we can extend the local modulus of continuity of Lemma \ref{localmodulusofcontinuity} to a global modulus of continuity.  In order to do this, we need a notion of computability for subsets of $X$.  We use the {\em Hausdorff distance} to determine the accuracy of an approximation.  The Hausdorff distance between two {\em compact} subsets $A$ and $B$ of a metric space $X$ is
$$
d_H(A,B)=\max\left\{\max_{a\in A}d_X(a,B),\max_{b\in B}d_X(b,A)\right\}.
$$
In words, the Hausdorff distance is the largest distance of a point in one set to the other set.
\begin{definition}
Let $(X,(s_1,s_2,\dots))$ be a computable metric space.  Let $S\subset X$ be compact.  We say $S$ is {\em computable} if there exists a Turing machine $\psi$ such that on input $n$, $\psi(n)$ is a finite collection of pairs $\{(k_i,l_i)\}$, where $k_i$ is a natural number and $l_i$ is an integer, representing closed balls $\overline{B}(s_{k_i},2^{-l_i})$ centered at $s_{k_i}$ and of radius $2^{-l_i}$ such that
$$
d_H\left(\bigcup_i \overline{B}(s_{k_i},2^{-l_i}),S\right)<2^{-n}.
$$
\end{definition}

Suppose that $X$ is a computable vector metric space.  Suppose that $S$ is a compact, convex, and computable set.  In this case, let $C$ be the union of balls produced by the Turing machine $\phi(n)$ from the definition of a computable set.  We observe that the boundary of $S$ lies in a tubular neighborhood of radius $2^{-n}$ of the boundary of $C$.  Any point not in this tubular neighborhood is guaranteed to be either interior to both $S$ and $C$ or exterior to both.

Observe that when $S\subset X$ is compact and computable and $f:S\rightarrow Y$ is a computable function, we can cover $S$ with finitely many balls and compute a local modulus of continuity on each ball.  By comparing these (and taking smaller balls if necessary), we can derive a global modulus of continuity.

\begin{lemma}[cf {\cite[Section 1.2]{BY}}]\label{globalmodulusofcontinuity}
Let $X$ and $Y$ be computable metric spaces, $S\subset X$, and $f:S\rightarrow Y$.  Suppose that $S$ is compact and computable.  If $f$ is computable, then the computable function $g$ in Lemma \ref{localmodulusofcontinuity} can be extended over all of $S$.  In other words, there is a computable function $g:\mathbb{N}\rightarrow \mathbb{N}$ such that if $\alpha,\beta\in S$ and $d_X(\alpha,\beta)<2^{-g(k)}$, then $d_Y(f(\alpha),f(\beta))<2^{-k}$.  In this case, we say that $f$ has a {\em computable global modulus of continuity.}  Since $g$ is computable, there exists a Turing machine $\mu$ such that for any $k$, $g(k)=\mu(k)$.
\end{lemma}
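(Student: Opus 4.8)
The plan is to produce, computably from $k$, an integer $g(k)$ that dominates the pointwise local modulus of continuity of $f$ supplied by Lemma~\ref{localmodulusofcontinuity}. Write $g_{\mathrm{loc}}(\alpha,k)$ for that modulus at $\alpha\in S$ (so $d_X(\alpha,\beta)<2^{-g_{\mathrm{loc}}(\alpha,k)}$ forces $d_Y(f(\alpha),f(\beta))<2^{-k}$) and let $\mu$ be the Turing machine that computes $\alpha\mapsto g_{\mathrm{loc}}(\alpha,k)$ from an oracle for $\alpha$. The first observation is that $\alpha\mapsto g_{\mathrm{loc}}(\alpha,k)$, being computable, is continuous on $S$ by Lemma~\ref{lem:continuous}, hence bounded on the compact set $S$; moreover \emph{any} integer $m\ge M_k:=\sup_{\alpha\in S}g_{\mathrm{loc}}(\alpha,k)$ works as a global modulus, since $d_X(\alpha,\beta)<2^{-m}\le 2^{-g_{\mathrm{loc}}(\alpha,k)}$ implies $d_Y(f(\alpha),f(\beta))<2^{-k}$. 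So it suffices to compute, from $k$, some upper bound for $M_k$; the exact value of the maximum plays no role.

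To compute such a bound I would convert the quantifier ``for all $\alpha\in S$'' into a finite search, using the computability of $S$. First, one computes, uniformly in $n$, a finite set $F_n$ of points of the dense sequence $\mathcal{S}_X$ with $S\subset\bigcup_{s\in F_n}\overline B(s,2^{-n})$ and $d_X(s,S)<2^{-n}$ for every $s\in F_n$ --- this is the standard fact that a computable compact set is recursively enumerable as a closed set (so ``a rational ball meets $S$'' is semidecidable), which together with the given Hausdorff over-approximations yields such nets after routine bookkeeping with radii. The centers in $F_n$ need not lie in $S$, but they are members of $\mathcal{S}_X$ and hence legitimate oracle values. Call a finite sequence $p=(c_0,\dots,c_q)$ with $c_j\in F_{j+2}$ \emph{admissible} if $S\cap\bigcap_{j=0}^{q}B(c_j,2^{-j-1})\neq\emptyset$, a semidecidable condition; such a $p$ is an initial segment of an oracle for any point of $S$ in that intersection. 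Then I would dovetail over admissible sequences, simulating $\mu$ on input $k$ with its $j$-th query answered by $c_j$: when $\mu$ halts after reading only $c_0,\dots,c_q$ I record its output, and when it demands a further query I branch to the admissible one-step extensions; finally I output the largest value recorded.

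The heart of the proof is the termination of this search, and the main obstacle to overcome is exactly that the net centers are not points of $S$, so one cannot simply evaluate $f$ (or $g_{\mathrm{loc}}(\cdot,k)$) at them. The resolution is a compactness argument of K\"onig type: the tree of admissible sequences is finitely branching because each $F_n$ is finite, and if the subtree of admissible sequences on which $\mu$ has not yet halted were infinite it would have an infinite branch $(c_j)_{j\ge 0}$; then the nested nonempty compact sets $S\cap\bigcap_{j=0}^{q}\overline B(c_j,2^{-j-1})$ share a point $\alpha^\ast\in S$, and $(c_j)_j$ is a bona fide oracle for $\alpha^\ast$ because $d_X(\alpha^\ast,c_j)\le 2^{-j-1}<2^{-j}$, so $\mu$ must halt on it, a contradiction. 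Hence the search examines only finitely many sequences and halts, uniformly in $k$. For correctness, any $\alpha\in S$ has, for each $j$, some $c_j\in F_{j+2}$ with $d_X(\alpha,c_j)\le 2^{-j-2}<2^{-j-1}$, giving an admissible oracle for $\alpha$ on which $\mu$ outputs $g_{\mathrm{loc}}(\alpha,k)$; this value is recorded during the search, so it is $\le m$, and therefore $m\ge M_k$. Setting $g(k):=m$ then gives the desired computable global modulus of continuity.
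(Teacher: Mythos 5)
Your proof is correct, and it supplies substantially more than the paper does: the paper gives no proof of this lemma at all, only the two-sentence heuristic preceding it (``cover $S$ with finitely many balls, compute a local modulus on each, and compare''), deferring to \cite{BY}. Your argument is the standard effective-compactness (``use principle'') implementation of exactly that heuristic: reduce to bounding $\sup_{\alpha\in S}g_{\mathrm{loc}}(\alpha,k)$, dovetail the machine $\mu$ over finite admissible oracle prefixes drawn from finite nets, and prove termination by K\"onig's lemma applied to the finitely branching tree of prefixes on which $\mu$ has not yet halted, extracting a limit point $\alpha^\ast\in S$ from the nested compact sets to reach a contradiction. That termination argument, together with your observation that any upper bound for the supremum suffices (so over-recording harmless extra values is fine), is the genuine mathematical content missing from the paper, and both halves (termination and the lower bound $m\ge M_k$) are handled correctly. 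The one step you compress is the construction of the nets $F_n$ and the semidecidability of the admissibility condition $S\cap\bigcap_j B(c_j,2^{-j-1})\neq\emptyset$ from the paper's Hausdorff-approximation definition of a computable compact set; this is standard but not entirely free, since the radii $2^{-l_i}$ of the approximating balls are not a priori small and one must argue that a ball of the level-$n$ approximation meeting a suitably shrunken intersection certifies a nearby point of $S$ in the full intersection (and conversely). Spelling out that bookkeeping would make the proof self-contained, but as written the argument is sound.
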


\subsection{The thermodynamic formalism}\label{sec:thermodynamics} 
A detailed discussion of 
the thermodynamic formalism can be found in \cite{Bo, PaPo, Ru, Wal:81}. Here, we briefly recall some of the relevant
facts. Let $f:X\to X$ be a continuous map on a compact metric space $X$.
Given a continuous one-dimensional potential $\Phi:X\to \bR$, we denote the topological pressure of $\Phi$ (with respect to $f$) by $P_{\rm top}(\Phi)$ and the topological entropy of $f$ by $h_{\rm top}(f)$, see~\cite{Wal:81} for the definition and further details. We recall that $h_{\rm top}(f)=P_{\rm top}(0)$.
The topological pressure satisfies the well-known variational principle, namely,

\begin{equation}\label{eq111}
P_{\rm top}(\Phi)=
\sup_{\mu\in \cM} \left(h_\mu(f)+\int_X \Phi\,d\mu\right).
\end{equation}
A measure
$\mu\in \cM$ that attains the supremum in Equation \eqref{eq111} is
called an equilibrium state (or equilibrium measure) of the potential $\Phi$. We denote the set of all equilibrium states of $\Phi$ by $ES(\Phi)$. Note that $ES(\Phi)$ is a compact and convex subset of $\cM$.
Moreover, if the map $\mu\mapsto h_\mu(f)$ 
is upper semi-continuous, then $ES(\Phi)\not=\emptyset$ in which case $ES(\Phi)$ contains at least one ergodic equilibrium state.
Given $\gamma>0$, we say $\Phi$ is {\em H\"older continuous with exponent $\gamma$} if there exists a $C>0$ such that $\|\Phi(x)-\Phi(y)\|\leq Cd(x,y)^\gamma$ for all $x,y\in X$. We denote the 
 space of all H\"older continuous potentials with exponent $\gamma$ by $C^\gamma(X,\bR).$ Analogously, we denote the space of H\"older continuous functions from $X$ to $\bR^m$ by $C^\gamma(X,\bR^m)$. 
 
 In \cite{KW1}, the authors discuss the class of systems with strong thermodynamic properties (STP). Roughly speaking, STP systems are those systems for which the topological pressure has strong regularity properties. The class of STP systems includes subshifts of finite type,  uniformly hyperbolic systems, and expansive homeomorphisms with specification.
We refer the interested reader to \cite{KW1} for details.
 In the following list, we highlight certain properties of the topological pressure that hold for many classes of systems including STP systems.
 \begin{enumerate}
\item $h_{\rm top}(f)<\infty$;
\item  The entropy map $\mu\mapsto h_\mu (f)$ is upper semi-continuous;
\item  The map $\Phi\mapsto  P_{\rm top}(\Phi)$ is real-analytic on $C^\gamma(X,\bR)$;
\item \label{ref4} Each  potential $\Phi \in C^\gamma(X,\bR)$ has a
unique equilibrium state $\mu_\Phi$. Furthermore,
$\mu_\Phi$ is ergodic, and, given $\Psi\in C^\gamma(X,\bR)$, we have
\begin{equation}\label{eqdifpre}
\frac{d}{dt} P_{\rm top}(\Phi + t\Psi )\Big|_{t=0}= \int_X \Psi
\,d\mu_\phi.
\end{equation}

\end{enumerate}

Next, we discuss an application of the thermodynamic formalism to the theory of rotation sets of STP systems.
 Let $f:X\to X$ be an STP system and let $\Phi\in C^\gamma(X,\bR^m)$. Given $v\in \bR^m$, let $\mu_{v\cdot \Phi}$ denote the unique equilibrium state of the potential $v\cdot \Phi=v_1 \Phi_1+\dots+ v_m \Phi_m$.
We have the following result:
\begin{theorem}[\cite{KW1} (see also \cite{GKLM})]\label{thmKW1}
 Let $\Phi\in C^\gamma(X,\bR^m)$ with ${\rm int}\, \R(\Phi)\not=\emptyset$. Then
 \begin{enumerate}
\item[(i)] The map $T_\Phi:\bR^m\to {\rm int}\, \R(\Phi)$, where $v\mapsto \rv(\mu_{v\cdot \Phi})$, is a real-analytic diffeomorphism,
\item[(ii)] For all $v\in \bR^m$, the measure $\mu_{v\cdot \Phi}$ is the unique localized measure of maximal entropy at $T_\Phi(v)$, and
\item[(iii)]  The map $w\mapsto \cH(w)$ is real-analytic on ${\rm int}\, \R(\Phi)$.
\end{enumerate}
 \end{theorem}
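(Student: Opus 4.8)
\emph{Proof plan.} The plan is to realise $T_\Phi$ as the gradient of the one–dimensional pressure function attached to $\Phi$ and then to read off all three assertions from strict convexity of that function; the hypothesis $\inn\R(\Phi)\neq\emptyset$ will be used precisely to force strict convexity. First I would introduce $\cP\colon\bR^m\to\bR$, $\cP(v)=P_{\rm top}(v\cdot\Phi)$. Since $v\mapsto v\cdot\Phi$ is linear from $\bR^m$ into $C^\gamma(X,\bR)$ and $\Psi\mapsto P_{\rm top}(\Psi)$ is real-analytic there (property~(3) of STP systems), $\cP$ is real-analytic; it is finite everywhere because $|P_{\rm top}(v\cdot\Phi)|\le h_{\rm top}(f)+\|v\cdot\Phi\|_\infty$, and it is convex since the pressure is convex in the potential while $v\mapsto v\cdot\Phi$ is affine. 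Applying Equation~\eqref{eqdifpre} with $\Psi=\Phi_j$ yields $\partial_{v_j}\cP(v)=\int\Phi_j\,d\mu_{v\cdot\Phi}$, so $\gr\cP(v)=\rv(\mu_{v\cdot\Phi})=T_\Phi(v)$; in particular $T_\Phi$ is real-analytic and $DT_\Phi(v)=D^2\cP(v)$.

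The crucial step is that $D^2\cP(v)$ is positive definite for every $v$. Differentiating \eqref{eqdifpre} once more identifies $c^{\top}D^2\cP(v)\,c$ with the asymptotic central-limit variance $\sigma^2_{\mu_{v\cdot\Phi}}(c\cdot\Phi)$, which vanishes exactly when $c\cdot\Phi$ is cohomologous to a constant; for STP systems this cohomological condition does not depend on $v$. Were it to hold for some $c\neq0$, then $\mu\mapsto\int c\cdot\Phi\,d\mu$ would be constant on $\cM$ and $\R(\Phi)$ would lie in an affine hyperplane, contradicting $\inn\R(\Phi)\neq\emptyset$. Hence $D^2\cP>0$ everywhere, so $\cP$ is strictly convex, $T_\Phi=\gr\cP$ is a local real-analytic diffeomorphism by the inverse function theorem, and $T_\Phi$ is injective: if $\gr\cP(v_1)=\gr\cP(v_2)$ with $v_1\neq v_2$, then $t\mapsto\cP(v_1+t(v_2-v_1))$ would have equal derivatives at $t=0$ and $t=1$, impossible for a strictly convex function.

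It remains to prove $T_\Phi(\bR^m)=\inn\R(\Phi)$, which I expect to be the main obstacle. The set $U:=T_\Phi(\bR^m)$ is nonempty, open (as $T_\Phi$ is a local diffeomorphism), and contained in $\R(\Phi)$, hence in $\inn\R(\Phi)$; since $\inn\R(\Phi)$ is open and connected, it is enough to show $U$ is closed in $\inn\R(\Phi)$. If not, choose $w_n=T_\Phi(v_n)\to w_0\in\inn\R(\Phi)\setminus U$. A bounded subsequence of $(v_n)$ would converge to some $v$ with $T_\Phi(v)=w_0\in U$, so $|v_n|\to\infty$; passing to a further subsequence, $v_n/|v_n|\to c$ with $|c|=1$. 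For every $\mu\in\cM$ the variational principle \eqref{eq111} gives $\cP(v_n)\ge h_\mu(f)+v_n\cdot\rv(\mu)$, while $\cP(v_n)=h_{\mu_{v_n\cdot\Phi}}(f)+v_n\cdot w_n$; subtracting, dividing by $|v_n|$, and letting $n\to\infty$ (using $h_{\rm top}(f)<\infty$) gives $c\cdot(w_0-\rv(\mu))\ge0$ for all $\mu$, i.e.\ $c\cdot w_0=\max_{w\in\R(\Phi)}c\cdot w$. Thus $w_0\in\partial\R(\Phi)$, a contradiction, and (i) follows.

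Parts (ii) and (iii) are then short. For (ii), fix $v$ and set $w=T_\Phi(v)$; any $\mu\in\cM_\Phi(w)$ satisfies $h_\mu(f)=h_\mu(f)+\int v\cdot\Phi\,d\mu-v\cdot w\le P_{\rm top}(v\cdot\Phi)-v\cdot w$ by \eqref{eq111}, with equality precisely when $\mu$ is an equilibrium state of $v\cdot\Phi$, i.e.\ $\mu=\mu_{v\cdot\Phi}$ by uniqueness; since $\rv(\mu_{v\cdot\Phi})=w$, this exhibits $\mu_{v\cdot\Phi}$ as the unique localized measure of maximal entropy at $w$ and shows $\cH(w)=\cP(v)-v\cdot w$. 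For (iii), part (i) lets us write $\cH(w)=\cP(T_\Phi^{-1}(w))-T_\Phi^{-1}(w)\cdot w$ on $\inn\R(\Phi)$, a composition of real-analytic maps. Besides the surjectivity argument above, the other place I expect to need care is the identification of the degeneracy locus of $D^2\cP$ with cohomology-to-a-constant, which is where the finer thermodynamic theory of STP systems genuinely enters.
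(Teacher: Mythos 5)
The paper does not prove this theorem; it is quoted from \cite{KW1} (see also \cite{GKLM}) and used as a black box. Your argument is essentially the standard proof used in those references: realize $T_\Phi$ as the gradient of the pressure $\cP(v)=P_{\rm top}(v\cdot\Phi)$, use the second-derivative/variance identity to upgrade convexity to strict convexity under $\inn\R(\Phi)\neq\emptyset$, obtain injectivity and local diffeomorphism from strict convexity and the inverse function theorem, run the openness/closedness-in-$\inn\R(\Phi)$ argument with the normalized escape direction $c=\lim v_n/|v_n|$ for surjectivity, and then read off (ii) and (iii) from the Legendre-transform identity $\cH(T_\Phi(v))=\cP(v)-v\cdot T_\Phi(v)$. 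All the steps are correct.

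Two small points worth making explicit if you were to write this up in full. First, the claim that $c^\top D^2\cP(v)c$ equals the asymptotic variance $\sigma^2_{\mu_{v\cdot\Phi}}(c\cdot\Phi)$ relies on second-order differentiability of the pressure and the standard variance formula for Gibbs states; this is covered by the STP regularity hypotheses but should be cited rather than asserted. Second, the statement that the degeneracy condition ``$c\cdot\Phi$ is cohomologous to a constant'' is independent of $v$ is correct precisely because it is a property of the function $c\cdot\Phi$ alone (via Liv\v{s}ic-type rigidity), not of the measure $\mu_{v\cdot\Phi}$ that produced it; spelling this out avoids any appearance of circularity. With those caveats, your proof is sound and matches the approach the paper delegates to the cited sources.
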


\section{Computability of rotation sets. }\label{sec:4}

In this section, we describe a class of computable dynamical systems for which we are able to establish the computability of the rotation set of a computable potential.  Throughout this section, we assume that $f:X\rightarrow X$ is a computable map on a computable compact metric space $X$.  Additionally, we assume that the metric $d_X$ on $X$ is a computable function.  In Section 5, we see that these conditions are satisfied for shift maps. Recall that the output of a Turing machine is a finite set of objects. This fact motivates the approach to consider approximations consisting of finitely many rotation vectors of invariant measures with  finite support. These measures are precisely the periodic point measures $\cM_{\rm Per}$, see Section 1.2 for the definition. On the other hand, there are several classes of dynamical systems whose periodic point measures are dense, see, e.g., \cite{GK} and the references therein. These systems naturally provide good test cases where the approximation by periodic point measures could lead to a proof of the computability of  rotation sets. We have the following result:

\begin{theorem}\label{thmrotcomp}
Suppose that $X$ is a compact computable metric space and that $\Phi\in C(X,\mathbb{R}^m)$ is a computable potential.  Since $X$ is compact, $\Phi$ has a computable global modulus of continuity $\mu$.  Suppose there exists a Turing machine $\psi$ such that $\psi(n)$ consists of (Turing machines which compute):
\begin{enumerate}
\item A finite set of computable points $p_1,\dots,p_k\in X$ and
\item A finite set of pairs $(q_1,m_1),\dots,(q_l,m_l)$ where $q_j$ is a computable periodic point with period $m_j$
\end{enumerate}
such that there exists a partition $\mathcal{P}=\{P_1,\dots,P_k\}$ of $X$ with the following properties:
\begin{enumerate}
\item For each $i$, $p_i\in P_i$,
\item For each $i$, $\diam(P_i)<2^{-\mu(n+1)-1}$, and 
\item For all $\mu\in\mathcal{M}$, there is some $\nu\in\conv(\mu_{q_1},\dots,\mu_{q_l})$ such that $\sum|\mu(P_i)-\nu(P_i)|<2^{-n}$.
\end{enumerate}
Then, $\R(\Phi)$ is computable.
\end{theorem}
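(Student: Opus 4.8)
The plan is to show that the finite data produced by $\psi(n)$ yields, after applying $\rv$, a finite set of points in $\bR^m$ whose convex hull approximates $\R(\Phi)$ in the Hausdorff metric to within a computable error, and that all the relevant computations can be carried out by a Turing machine.

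\textbf{Setup and the approximating polytope.} First I would fix $n$ and run $\psi(n)$ to obtain the points $p_1,\dots,p_k$, the periodic points $q_1,\dots,q_l$ with their periods, and implicitly the partition $\mathcal{P}=\{P_1,\dots,P_k\}$. For each periodic point $q_j$, the periodic point measure $\mu_{q_j}$ is supported on the finite orbit of $q_j$, so $\rv(\mu_{q_j}) = \frac{1}{m_j}\sum_{s=0}^{m_j-1}\Phi(f^s(q_j))$ is a finite average of values of $\Phi$ at computable points; since $\Phi$ and $f$ are computable, each $\rv(\mu_{q_j})$ is a computable point of $\bR^m$, and we may compute it to any desired accuracy. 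Let $K_n = \conv\{\rv(\mu_{q_1}),\dots,\rv(\mu_{q_l})\} \subset \R(\Phi)$ (this inclusion holds because each $\mu_{q_j}\in\cM$ and $\R(\Phi)$ is convex). The claim will be that $d_H(K_n, \R(\Phi))$ is small.

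\textbf{The two-sided Hausdorff estimate.} The inclusion $K_n \subset \R(\Phi)$ gives one direction for free. For the other direction, take any $w = \rv(\mu)\in\R(\Phi)$ with $\mu\in\cM$. By property (3) of the partition, there is $\nu\in\conv(\mu_{q_1},\dots,\mu_{q_l})$ with $\sum_i |\mu(P_i)-\nu(P_i)| < 2^{-n}$; note $\rv(\nu)\in K_n$. I would then estimate $\|\rv(\mu)-\rv(\nu)\|$ as follows: pick any point $p_i\in P_i$ for each $i$ and approximate $\int\Phi\,d\mu$ by $\sum_i \mu(P_i)\Phi(p_i)$. Since $\diam(P_i) < 2^{-\mu(n+1)-1}$, the modulus of continuity of $\Phi$ gives $\|\Phi(x)-\Phi(p_i)\| < 2^{-(n+1)}$ for all $x\in P_i$, so $\|\int\Phi\,d\mu - \sum_i\mu(P_i)\Phi(p_i)\| < 2^{-(n+1)}$, and likewise for $\nu$. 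Finally $\|\sum_i\mu(P_i)\Phi(p_i) - \sum_i\nu(P_i)\Phi(p_i)\| \le \|\Phi\|_\infty \sum_i|\mu(P_i)-\nu(P_i)| < \|\Phi\|_\infty 2^{-n}$. Combining, $\|\rv(\mu)-\rv(\nu)\| < 2^{-n} + \|\Phi\|_\infty 2^{-n}$, so every point of $\R(\Phi)$ is within $C 2^{-n}$ of $K_n$, where $C = 1 + \|\Phi\|_\infty$ (and an upper bound for $\|\Phi\|_\infty$ is computable from the data of $\Phi$ and compactness of $X$). Rescaling $n$ appropriately and covering $K_n$ (a computable polytope) by finitely many small computable balls, this establishes computability of $\R(\Phi)$ in the sense of the definition in Section 2.1.

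\textbf{Main obstacle.} The routine parts are the two triangle-inequality estimates above; the subtler point is the \emph{effectivity bookkeeping}: one must check that from $\psi(n)$ one can actually extract, by a single Turing machine, the oracles for the $p_i$ and $q_j$, then compute $\Phi$ at the (computably many) orbit points $f^s(q_j)$ to sufficient precision, form the vertices of $K_n$, and output a finite ball cover of $K_n$ — all while threading through the global modulus of continuity $\mu$ of $\Phi$ (which exists and is computable by Lemma \ref{globalmodulusofcontinuity} since $X$ is compact and computable) and an a priori computable bound on $\|\Phi\|_\infty$. The one genuinely delicate modeling issue is that property (3) is stated for the \emph{exact} measures, so the estimate must be arranged so that the target accuracy is met even though $K_n$ is computed only approximately; this is handled by demanding accuracy $2^{-(n+1)}$ in the intermediate steps and absorbing errors, exactly as in the displayed estimate. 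I expect the verification that all these sub-procedures compose into one Turing machine — rather than any analytic difficulty — to be the main thing requiring care.
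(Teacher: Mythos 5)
Your proposal is correct and essentially the same as the paper's argument: partition $X$, use the global modulus of continuity to replace $\int\Phi\,d\mu$ by the partition sum $\sum_i\mu(P_i)\Phi(p_i)$ with error $2^{-(n+1)}$, use hypothesis (3) to move from $\mu$ to a convex combination $\nu$ of periodic point measures at cost $\|\Phi\|_\infty 2^{-n}$, compute a bound on $\|\Phi\|_\infty$ from the $p_i$'s, and output the (computably approximated) convex hull of the rotation vectors of the $\mu_{q_j}$. The only cosmetic difference is that the paper introduces an explicit intermediate step potential $\Phi_n$ and works with $\conv(\rv_{\Phi_n}(\mu_{q_j}))\subset\R(\Phi_n)$, whereas you compute $\rv_\Phi(\mu_{q_j})=\frac1{m_j}\sum_s\Phi(f^s(q_j))$ directly so that your polytope $K_n$ sits inside $\R(\Phi)$ itself and the inclusion $K_n\subset\R(\Phi)$ gives one side of the Hausdorff bound for free; the error accounting is identical in both versions.
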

Before beginning the proof, we observe that in the statement, it is not necessary to compute the $P_i$'s, it is enough that they merely exist.
\begin{proof}
First, we define $\Phi_n:X\rightarrow\mathbb{R}^m$ as $\Phi_n(x)=\Phi(p_i)$ for $x\in P_i$.  Since the $P_i$'s are not constructed by $\psi$ (they might not even be computable), we cannot construct $\Phi_n$ even though we can work with it, theoretically.  By construction, since $x\in P_i$, $d_X(x,p_i)<2^{-\mu(n+1)}$, the property of the global modulus of continuity, see Lemma \ref{globalmodulusofcontinuity}, implies that $\|\Phi(x)-\Phi(p_i)\|<2^{-n-1}$.  Therefore, for all $\mu\in\cM$, by bringing the norm inside the integral, we see that $\|\rv_\Phi(\mu)-\rv_{\Phi_n}(\mu)\|<2^{-n-1}$.  Hence, we can bound the Hausdorff distance between the corresponding rotation sets $d_H(\R(\Phi),\R(\Phi_n))<2^{-n-1}$.
We note that, in general, $\Phi_n$ is not continuous (i.e., a step function) but we may still use Equation \eqref{defrotset} for the definition of $\R(\Phi_n)$ and refer to it as the rotation set  of $\Phi_n$. 

Second, we find an upper bound on $\Phi$.  We observe that each $p_i$ is computable and $\Phi$ and the Euclidean norm are computable functions, so we can compute $a_i$, which approximates the value of $\|\Phi(p_i)\|$ to an accuracy of $2^{-n-1}$.  Let $K=\max\{a_i\}+2^{-n}$.  Then we show that $K$ is an upper bound on $\|\Phi\|_\infty\geq \|\Phi_n\|_\infty$ as follows:  For all $x\in X$, there is some $p_i$ with $d_X(x,p_i)<2^{-\mu(n+1)}$, which implies that $\|\Phi(x)-\Phi(p_i)\|<2^{-n-1}$.  Therefore, $\|\Phi(x)\|<\|\Phi(p_i)\|+2^{-n-1}\leq a_i+2^{-n}\leq K$.  Moreover, we observe that $K$ is computable.

Next, we prove that the Hausdorff distance between $\R(\Phi_n)$ and the convex hull $C=\conv(\rv_{\Phi_n}(\mu_{q_1}),\dots,\rv_{\Phi_n}(\mu_{q_l}))$ is less than $2^{-n}K$.  Since the convex hull is a subset of $\R(\Phi_n)$, it is enough to prove that for all $\mu\in\cM$, $\dist_{\mathbb{R}^m}(\rv_{\Phi_n}(\mu),C)<2^{-n}K$.  Fix $\mu\in\cM$, by assumption, we know there exists $\nu\in C$ such that $\sum|\mu(P_i)-\nu(P_i)|<2^{-n}$.  Observe that 
\begin{align*}
\rv_{\Phi_n}(\mu)&=\int_X\Phi_nd\mu=\sum \Phi_n(p_i)\mu(P_i)\\
&=\sum\Phi_n(p_i)\nu(P_i)+\sum \Phi_n(p_i)(\mu(P_i)-\nu(P_i))\\
&=\rv_{\Phi_n}(\nu)+\sum \Phi_n(p_i)(\mu(P_i)-\nu(P_i))
\end{align*}
Therefore, using the upper bound on $\Phi_n$ and the assumptions, we conclude

\begin{equation}
\|\rv_{\Phi_n}(\mu)-\rv_{\Phi_n}(\nu)\|\leq \sum\|\Phi_n(p_i)\||\mu(P_i)-\nu(P_i)|\leq 2^{-n}K.
\end{equation}
Finally, we approximate the values of $\rv_{\Phi_n}(\mu_{q_j})$.  For each $j$, let $Q_j=\{q_{j,0},\dots,q_{j,m_j-1}\}$ be the orbit of $q_j=q_{j,0}$.  
Then, $\rv_{\Phi_n}(\mu_{q_j})=\frac{1}{m_j}\sum_{h=0}^{m_j-1}\Phi_n(q_{j,h})$.  Observe that since $f$ is computable, $q_{j,h}$ is also computable for each $h$.  We fix $j$ and $h$ (but allow $i$ to vary).  Since $d_X$ is computable and each $p_i$ and $q_{j,h}$ are computable, we can compute $d_X(p_i,q_{j,h})$ up to an accuracy of $2^{-\mu(n+1)-1}$.  
By construction, there is some $i$ so that $d_X(p_i,q_{j,h})<2^{-\mu(n+1)-1}$.
Our error estimates show that there is at least one $i$ so that we can guarantee that $d_X(p_i,q_{j,h})<2^{-\mu(n+1)}$. Let $i_{j,h}$ be any $i$ that satisfies this inequality.  Therefore, $\|\Phi(p_{i_{j,h}})-\Phi(q_{j,h})\|<2^{-n-1}$.  In this case, we know that 
\begin{equation}
\frac{1}{m_j}\sum_{h=0}^{m_j-1}\Phi(q_{j,h})=\frac{1}{m_j}\sum_{h=0}^{m_j-1}\Phi(p_{i_{j,h}})+\frac{1}{m_j}\sum_{h=0}^{m_j-1}(\Phi(q_{j,h})-\Phi(p_{i_{j,h}}))
\end{equation}
Therefore, 
\begin{equation}\label{eq:computable:rot}
\left\|\rv_{\Phi_n}(\mu_{q_j})-\frac{1}{m_j}\sum_{h=0}^{m_j-1}\Phi(p_{i_{j,h}})\right\|<2^{-n-1}.
\end{equation}
We observe, by the construction above, that the sum in Equation \eqref{eq:computable:rot} is computable.

Then, using any standard convex hull algorithm\footnote{In computational geometry it is common to use the real RAM model of computation, see \cite{marks}.  This model is considered to be unrealistic because it assumes that all real numbers can be represented explicitly.}, see, for example \cite{marks}, we can compute the convex hull 
\begin{equation}
D=\conv\left(\frac{1}{m_j}\sum_{h=0}^{m_j-1}\Phi(p_{i_{j,h}})\right)_{1\leq j\leq l}.
\end{equation}
Since all of the coordinates are computable (rational), a convex hull algorithm can be performed by a Turing machine, so $D$ is computable.  Since the error in the vertices is at most $2^{-n-1}$ and every point within the convex hull is a weighted combination of the vertices, we know that the Hausdorff distance between $C$ and $D$ is at most $2^{-n-1}$.

Combining these steps, we see that the Hausdorff distance between $\R(\Phi)$ and $D$ is at most $2^{-n}(K+1)$.  Therefore, since $K$ is computable, we can compute an approximation to $\R(\Phi)$ with arbitrary precision.  Therefore $\R(\Phi)$ is computable.
\end{proof}

Next, we observe that the construction in Theorem \ref{thmrotcomp} can be applied to conjugate systems.

\begin{corollary}\label{corcomputrot}
Let $X$ and $Y$ be computable metric spaces.  Suppose that $(X,f)$ and $(Y,g)$ are dynamical systems which are conjugate via the homeomorphism $h:X\rightarrow Y$, i.e., $h\circ f=g\circ h$.  Suppose that $h, h^{-1}$ and $\Phi\in C(X,\mathbb{R}^m)$ are computable.  Then
\begin{enumerate}
\item The conjugate potential $\Phi'=\Phi\circ h^{-1}\in C(Y,\mathbb{R}^m)$ is computable,
\item For all $\mu\in\cM_f$, the map $h_\ast:\cM_f\rightarrow\cM_g$ defined by $(h_\ast\mu)(B)=\mu(h^{-1}(B))$ is a bijection, where $\cM_f$ and $\cM_g$ are the $f$ or $g$-invariant probability measures on the corresponding spaces.  Moreover, $(X,\mu,f)$ and $(Y,\h_\ast\mu,g)$ are measure-theoretic isomorphic.
\item We have $\rv_{\Phi}(\mu)=\rv_{\Phi'}(h_\ast\mu)$ and $\R(f,\Phi)=\R(g,\Phi')$.
\end{enumerate}
Moreover, suppose that $X$ satisfies the conditions of Theorem \ref{thmrotcomp}, then $Y$ also satisfies the conditions of Theorem \ref{thmrotcomp}.
\end{corollary}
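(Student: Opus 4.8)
The plan is to verify the three numbered claims in order, and then transport the three hypotheses of Theorem \ref{thmrotcomp} from $X$ to $Y$ via the conjugacy. For (1), the potential $\Phi' = \Phi\circ h^{-1}$ is a composition of computable functions, hence computable; this is immediate from the remark in Section \ref{sec:compute:basic} that the composition of computable functions is computable (feed the output Turing machine for $h^{-1}$ as the input oracle for $\Phi$). For (2), the push-forward $h_\ast$ is the standard measure-isomorphism induced by a topological conjugacy: $h_\ast\mu$ is a Borel probability measure because $h^{-1}$ is continuous (hence Borel-measurable), it is $g$-invariant because $g\circ h = h\circ f$ forces $(h_\ast\mu)\circ g^{-1} = h_\ast(\mu\circ f^{-1}) = h_\ast\mu$, and it is a bijection with inverse $(h^{-1})_\ast$; the measure-theoretic isomorphism of $(X,\mu,f)$ and $(Y,h_\ast\mu,g)$ is then witnessed by $h$ itself. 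For (3), the change-of-variables formula gives $\int_Y \Phi_i' \, d(h_\ast\mu) = \int_X \Phi_i'\circ h \, d\mu = \int_X \Phi_i \, d\mu$ for each coordinate $i$, so $\rv_{\Phi'}(h_\ast\mu) = \rv_\Phi(\mu)$; since $h_\ast$ is a bijection of $\cM_f$ onto $\cM_g$, taking the image over all invariant measures yields $\R(g,\Phi') = \R(f,\Phi)$.

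For the final assertion, suppose $(X,f,\Phi)$ satisfies the hypotheses of Theorem \ref{thmrotcomp}, with Turing machine $\psi$ producing computable points $p_1,\dots,p_k$ and computable periodic points $(q_1,m_1),\dots,(q_l,m_l)$ witnessed by an (unconstructed) partition $\mathcal{P} = \{P_1,\dots,P_k\}$. I would build the analogous machine $\psi'$ for $(Y,g,\Phi')$ by outputting the points $h(p_i)$ and the pairs $(h(q_j),m_j)$: these are computable because $h$ is computable and composition preserves computability, and $h(q_j)$ is $g$-periodic of period $m_j$ since $h$ conjugates $f$ to $g$. The witnessing partition for $Y$ is $\mathcal{P}' = \{h(P_1),\dots,h(P_k)\}$, which is a genuine partition of $Y$ because $h$ is a bijection; condition (1) of Theorem \ref{thmrotcomp} ($h(p_i)\in h(P_i)$) is clear, and condition (3) holds because $h_\ast$ matches measures and $(h_\ast\mu)(h(P_i)) = \mu(P_i)$, $\mu_{h(q_j)} = h_\ast\mu_{q_j}$, so the inequality $\sum |\mu(P_i) - \nu(P_i)| < 2^{-n}$ transports verbatim. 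The one genuinely delicate point — and the step I expect to be the main obstacle — is condition (2), the diameter bound $\diam(h(P_i)) < 2^{-\mu'(n+1)-1}$ where $\mu'$ is the global modulus of continuity of $\Phi'$: the diameters $\diam(P_i)$ that the original $\psi$ was tuned against are measured in $d_X$, but the new requirement is in $d_Y$, and $h$ need not be Lipschitz. To handle this one must use that $h^{-1}:Y\to X$ is a computable function on the compact space $Y$ and therefore, by Lemma \ref{globalmodulusofcontinuity}, has a computable global modulus of continuity $\eta$; hence $d_Y(y,y') < 2^{-\eta(N)}$ implies $d_X(h^{-1}(y),h^{-1}(y')) < 2^{-N}$, i.e. a $d_X$-ball of radius $2^{-N}$ pulls back to cover any sufficiently small $d_Y$-ball, and conversely small $d_X$-sets have controllably small $d_Y$-images via the modulus of continuity of $h$ itself. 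Concretely, I would run the hypothesis of Theorem \ref{thmrotcomp} for $X$ not at level $n$ but at a level $n'$ chosen (computably, using the moduli of $h$, $h^{-1}$, $\Phi$ and $\Phi'$) large enough that $\diam_{d_X}(P_i) < 2^{-n'}$ forces $\diam_{d_Y}(h(P_i)) < 2^{-\mu'(n+1)-1}$; since all the moduli involved are computable, this recalibration is effective, and the rest of the argument in Theorem \ref{thmrotcomp} then applies to $(Y,g,\Phi')$ to conclude that $\R(g,\Phi')$ — equivalently $\R(f,\Phi)$ — is computable.
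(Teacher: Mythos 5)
The paper states this corollary without proof, so there is no official argument to compare against; on the merits, your proof of parts (1)--(3) is correct and standard, and your transport of the Theorem~\ref{thmrotcomp} data from $X$ to $Y$ via $h$ is the natural argument. You correctly isolate the diameter condition as the only point requiring care.

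Two small remarks on that last step. First, a slip in the recalibration: running the $X$-hypothesis at level $n'$ yields $\diam_{d_X}(P_i)<2^{-\mu(n'+1)-1}$, not $\diam_{d_X}(P_i)<2^{-n'}$ as written; the quantity you get to shrink by raising $n'$ is $2^{-\mu(n'+1)-1}$, so the recalibration implicitly assumes the modulus $\mu$ of $\Phi$ is unbounded (which fails, e.g., when $\Phi$ is locally constant, though that case is degenerate and easily handled separately). Second, and this would let you bypass the recalibration entirely: the diameter bound in the hypothesis of Theorem~\ref{thmrotcomp} is only used in its proof to deduce the variation bound $\|\Phi(x)-\Phi(p_i)\|<2^{-n-1}$ for $x\in P_i$ (and the analogous bound for the periodic points $q_{j,h}$). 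For $y=h(x)\in h(P_i)$ one has $\Phi'(y)=\Phi(x)$ and $\Phi'(h(p_i))=\Phi(p_i)$, so the variation of $\Phi'$ on $h(P_i)$ \emph{equals} the variation of $\Phi$ on $P_i$, with no loss whatsoever. Thus the substantive content of condition (2) transports at the same level $n$; if you insist on literally matching the diameter-based phrasing of the hypothesis (which the statement of the corollary strictly requires), your recalibration via the moduli of $h$ and $h^{-1}$ is the right fix, modulo the unboundedness caveat above.
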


\begin{remark}
In Section 5, we show that Theorem \ref{thmrotcomp} holds for subshifts of finite type. Thus, Corollary \ref{corcomputrot} establishes the computability of rotation sets for  systems that are computablly conjugate to a subshift of finite type including uniformly hyperbolic and parabolic systems. 
\end{remark}

Next, we establish a criteria for the computability of the maximal radius of a ball that is contained in the rotation set.

\begin{proposition}\label{prop:radiuscomputable}
Suppose that $\R(\Phi)$ is computable with $\inn \R(\Phi)\not=\emptyset$,  and that there exists a Turing machine $\psi$ such that $\psi(n)$ is a convex polytope whose Hausdorff distance to $\R(\Phi)$ is at most $2^{-n}$.  Let $r:\inn\R(\Phi)\rightarrow\mathbb{R}$ be the function such that $r(w)$ is the radius of the largest open ball centered at $w$ contained within $\inn\R(\Phi)$.  Then $r$ is computable.
\end{proposition}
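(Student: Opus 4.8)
The plan is to express $r$ as the restriction to $\inn\R(\Phi)$ of a globally defined, $1$-Lipschitz ``signed distance to the boundary'' function attached to the convex body $\R(\Phi)$, and then to approximate that function by the same quantity for the polytopes $\psi(n)$, exploiting the fact that this signed-distance function depends $1$-Lipschitzly on the convex body with respect to the Hausdorff metric. Convexity of $\R(\Phi)$ is what makes the whole argument work.

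First I would fix notation: for a nonempty compact convex $A\subset\bR^m$ let $h_A(u)=\sup_{x\in A}\langle u,x\rangle$ be its support function and set
\[
g_A(w)\eqdef\dist(w,\bR^m\setminus A)-\dist(w,A)=\inf_{\|u\|=1}\bigl(h_A(u)-\langle u,w\rangle\bigr),
\]
the signed distance from $w$ to $\partial A$ (positive on $\inn A$, negative off $A$); the two expressions agree by the elementary fact that, for a convex body, the distance from an interior point to the complement equals the distance to the nearest supporting hyperplane, and the distance from an exterior point to the body equals the distance to the nearest separating hyperplane. The first claim is that $g_{\R(\Phi)}(w)=r(w)$ for all $w\in\inn\R(\Phi)$: the open ball $B(w,g_{\R(\Phi)}(w))$ is contained in $\R(\Phi)$, hence, being open, in $\inn\R(\Phi)$, and no ball of larger radius about $w$ can lie in $\R(\Phi)$. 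This step is routine convex geometry.

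The key estimate I would then establish is that $w\mapsto g_A(w)$ varies $1$-Lipschitzly in $A$: for nonempty compact convex $A,B$,
\[
\bigl|g_A(w)-g_B(w)\bigr|\le\sup_{\|u\|=1}\bigl|h_A(u)-h_B(u)\bigr|=d_H(A,B),
\]
the inequality because an infimum of families of functions that differ pointwise by at most $d_H(A,B)$ differs by at most $d_H(A,B)$, and the identity being the standard support-function formula for the Hausdorff distance between convex bodies. With this in hand the algorithm is: given an oracle $\phi$ for $w\in\inn\R(\Phi)$ and a target precision $k$, set $n=k+2$, run $\psi(n)$ to obtain an explicit rational convex polytope $P_n$ with $d_H(P_n,\R(\Phi))\le 2^{-n}$, compute a rational $\tilde g$ with $|\tilde g-g_{P_n}(w)|<2^{-k-2}$, and output $\tilde g$. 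Correctness follows from
\[
|r(w)-\tilde g|\le\bigl|g_{\R(\Phi)}(w)-g_{P_n}(w)\bigr|+\bigl|g_{P_n}(w)-\tilde g\bigr|\le 2^{-k-2}+2^{-k-2}<2^{-k}.
\]
That $w\mapsto g_{P_n}(w)$ is a computable function of $w$ for an explicitly given rational polytope follows from writing $g_{P_n}=\dist(\cdot,\bR^m\setminus P_n)-\dist(\cdot,P_n)$: both distances to a fixed rational polytope are computable functions (one by minimizing the computable function $x\mapsto\|w-x\|$ over the computable compact set $P_n$, the other, after the finite exact rational facet enumeration of $P_n$, by $\dist(w,\bR^m\setminus P_n)=\min_i\max\{0,(b_i-\langle a_i,w\rangle)/\|a_i\|\}$ when $P_n$ is full-dimensional and $0$ otherwise), and computable functions are closed under composition and finite $\min/\max$; since $g_{P_n}$ is Lipschitz continuous everywhere, evaluating it at $w$ to precision $2^{-k-2}$ is unproblematic even when $w$ is near $\partial P_n$.

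The one point that genuinely needs care — and, I expect, the main obstacle — is that $r(w)$ is not known a priori, so one cannot simply wait until $w$ falls inside $\inn P_n$ before trusting the comparison; it is precisely the \emph{unconditional} Lipschitz bound $|g_{\R(\Phi)}(w)-g_{P_n}(w)|\le d_H(\R(\Phi),P_n)$, valid for every $w$ and every $n$ regardless of whether $P_n$ is full-dimensional or whether $w\in P_n$, that keeps the algorithm correct. Everything else reduces to the two standard facts about convex bodies recorded above together with the closure properties of computable functions.
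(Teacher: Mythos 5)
Your proposal is correct, and it takes a genuinely different route from the paper. You package the inradius as the restriction to $\inn\R(\Phi)$ of the globally defined signed boundary-distance $g_A$, and you rely on two standard convex-geometry facts: the support-function representation $g_A(w)=\inf_{\|u\|=1}\bigl(h_A(u)-\langle u,w\rangle\bigr)$ and the identity $d_H(A,B)=\sup_{\|u\|=1}|h_A(u)-h_B(u)|$. Together these give the unconditional $1$-Lipschitz bound $|g_{\R(\Phi)}(w)-g_{P_n}(w)|\le d_H(\R(\Phi),P_n)$, valid for every $w$ and every nonempty compact convex $P_n$, which is exactly the estimate that carries the algorithm. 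The paper instead works with the positive part of this quantity, namely $r'(w)$ defined as the largest inradius of $P_{n+2}$ at $w$ (declared to be $0$ when $w\notin\inn P_{n+2}$), and gets the analogous bound $|r(w)-r'(w)|\le 2^{-n-2}$ by a tubular-neighborhood observation about the boundaries of nearby convex bodies, followed by a second comparison $|r'(w)-r'(w')|\le 2^{-n-2}$ for a rational approximant $w'$, and finally an exact computation of $r'(w')^2$ as a rational squared distance. So both proofs hinge on the same stability-of-inradius-under-Hausdorff-perturbation phenomenon, and both use convexity of $\R(\Phi)$ in an essential way, but your abstraction via $g_A$ gives a shorter and more uniform argument: you do not need case analysis for $w\notin\inn P_n$ or for a degenerate $P_n$, since the Lipschitz inequality holds without caveats. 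The trade-off is that you import the two support-function identities as black boxes rather than re-deriving them. One bookkeeping caveat worth flagging: your facet formula $\dist(w,\bR^m\setminus P_n)=\min_i\max\{0,(b_i-\langle a_i,w\rangle)/\|a_i\|\}$ is only correct for an \emph{irredundant} $H$-representation of a full-dimensional $P_n$ (a redundant constraint hyperplane passing close to an interior point would wrongly shrink the $\min$); since $\psi(n)$ in this paper naturally outputs a vertex list and a convex-hull routine returns an irredundant facet list, this is harmless, but it should be stated.
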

\begin{proof}
Fix $w\in\inn\R(\Phi)$, and suppose that $\phi$ is an oracle for $w$.  The general idea of this proof is to approximate $\R(\Phi)$ and $w$ and use the approximations to approximate $r$.

Using the given oracle, we can construct a polytope $\psi(n+2)=P_{n+2}$ whose Hausdorff distance to $\R(\Phi)$ is at most $2^{-n-2}$.  Observe that since both $P_{n+2}$ and $\R(\Phi)$ are convex, their boundaries lie in tubular neighborhoods of radius $2^{-n-2}$ of each other.

Let $r'=r'_{n+2}:\inn\R(\Phi)\rightarrow\mathbb{R}$ be the function such that $r'(w)$ is the radius of the largest open ball centered at $w$ contained within $P_{n+2}$.  If $w$ is not in $\inn P_{n+2}$, then $r'(w)=0$. Let $v$ be a closest point to $w$ on the boundary of $P_{n+2}$.  Then, by the tubular neighborhood observation, there is some $u$ in the boundary of $\R(\Phi)$ whose distance to $v$ is at most $2^{-n-2}$. Observe, by the reverse triangle inequality, $r'(w)=\|w-v\|\geq \|w-u\|-\|u-v\|$.  Since $u$ is on the boundary of $\R(\Phi)$, we have that $\|w-u\|\geq r(w)$, so $r'(w)\geq r(w)-2^{-n-2}$.  By performing the same argument, but reversing the roles of $P_{n+2}$ and $\R(\Phi)$, we conclude that $|r(w)-r'(w)|\leq 2^{-n-2}$.

Using the oracle $\phi$, let $w'=\phi(n+2)$ be an approximation to $w$ within distance $2^{-n-2}$.  Observe that the ball $B(w,r'(w))$ is contained within $P_{n+2}$.  Then, the ball $B(w',r'(w)-2^{-n-2})$ is completely contained within $B(w,r'(w))$, and, hence, it is contained within $P_{n+2}$.  Therefore, $r'(w')\geq r'(w)-2^{-n-2}$.  By reversing the roles of $w$ and $w'$, we can then conclude that $|r'(w)-r'(w')|\leq 2^{-n-2}$.  Hence, $r'(w')$ and $r(w)$ differ by at most $2^{-n-1}$.

Observe that $r'(w')^2$ is the square of the distance between $w'$, which is a point with rational coordinates, and a defining linear space for the boundary of $P_{n+2}$, which is a hyperplane defined by points with rational coordinates.  Therefore, $r'(w')^2$ can be computed exactly.  Moreover, the square root function is computable, so $r'(w')$ can be computed to an accuracy of at most $2^{-n-1}$ by a Turing machine.  Hence, the difference between $r(w)$ and the approximation to $r'(w')$ is at most $2^{-n}$, and $r$ is computable.
\end{proof}

Observe that, when the conditions of Theorem \ref{thmrotcomp} are satisfied, the conditions of Proposition \ref{prop:radiuscomputable} are satisfied as well.

\section{Computability of localized entropy}\label{sec:computability:localized}
Our goals in this section are twofold: First, we develop a general theory for the localized entropy function of approximations, and, second, we apply this theory to study the computability of the localized entropy at points in the interior of the rotation set.
In Section \ref{sec:7}, we establish 
 that there are fundamental differences between interior  and boundary points of the rotation set.
 Throughout the remainder of this section, we assume that $f:X\to X$ is a continuous map on a compact metric space $X$ with $h_{\rm top}(f)< \infty$, and that the map $\mu\mapsto h_\mu(f)$ is upper semi-continuous.
 Recall that, under these assumptions, for any $\Phi\in C(X,\bR^m)$, the localized entropy function $\cH_\Phi$ is continuous, and, for each $w\in \R(\Phi)$, there exists at least one $\mu\in \cM_\Phi(w)$ with 
$h_\mu(f)=\cH(w)$, i.e., $\mu$ is a localized measure of maximal entropy at $w$.

\subsection{Localized entropies of approximations}
Given $\Phi\in C(X,\bR^m)$, $w_0\in \bR^m$, and $r>0$, we define the maximum and minimum local
entropy  as follows: The maximum local entropy on $\overline{B}(w_0,r)$ is defined by
\begin{equation}\label{defhu}
h^u_\Phi(w_0,r)=\sup\{\cH_{\Phi}(w): w\in \overline{B}(w_0,r)\cap \R(\Phi)\},
\end{equation}
and the minimum local entropy   on $\overline{B}(w_0,r)$ is defined by
\begin{equation}\label{defhl}
h^l_\Phi(w_0,r)=\inf\{\cH_{\Phi}(w): w\in \overline{B}(w_0,r)\cap \R(\Phi)\}.
\end{equation}
Here, we use the conventions $\sup \emptyset =+\infty$ and $\inf \emptyset =-\infty$. Since $w\mapsto \cH_\Phi(w)$ is continuous, it follows that
if $\overline{B}(w_0,r)\cap \R(\Phi)\not=\emptyset$, then the supremum and infimum in the definition of $h^{u/s}_\Phi(w_0,r)$ is actually a maximum or minimum, respectively.

The following result provides a tool to compute the localized entropy of a given potential in terms of the limit of the maximal local entropies of an approximating sequence:
 \begin{proposition}\label{prop101}
Let $\Phi\in C(X, \bR^m)$, and let $(\Phi_{\epsilon_n})_{n}$ be an approximating sequence for $\Phi$. Let $w_0\in \R(\Phi)$, and let $\alpha\geq 1$.
Then 
\begin{enumerate}[(i)]
\item $h^u_{\Phi_{\epsilon_n}}(w_0,\alpha \epsilon_n)\to \cH_\Phi(w_0)$ as $n\to \infty$;
\item
If $\alpha>1$ and $\epsilon_{n+1}<\frac{\alpha-1}{\alpha+1} \epsilon_n$ for all $n\in \bN$, then $(h^u_{\Phi_{\epsilon_n}}(w_0,\alpha \epsilon_n))_n$ is a decreasing sequence.
\end{enumerate}
\end{proposition}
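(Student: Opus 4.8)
The plan is to prove (i) by a two-sided squeeze and then deduce (ii) by a nesting argument on the balls. For (i), I would first observe the easy direction: since $\|\Phi_{\epsilon_n}-\Phi\|_\infty<\epsilon_n$, bringing the norm inside the integral gives $\|\rv_\Phi(\mu)-\rv_{\Phi_{\epsilon_n}}(\mu)\|<\epsilon_n$ for every $\mu\in\cM$, hence $d_H(\R(\Phi),\R(\Phi_{\epsilon_n}))<\epsilon_n$ (this is exactly the estimate already used in the proof of Theorem \ref{thmrotcomp}). Now take a localized measure of maximal entropy $\mu_0\in\cM_\Phi(w_0)$ with $h_{\mu_0}(f)=\cH_\Phi(w_0)$; its $\Phi_{\epsilon_n}$-rotation vector $w_n:=\rv_{\Phi_{\epsilon_n}}(\mu_0)$ satisfies $\|w_n-w_0\|<\epsilon_n\le\alpha\epsilon_n$, so $w_n\in\overline B(w_0,\alpha\epsilon_n)\cap\R(\Phi_{\epsilon_n})$ and therefore $h^u_{\Phi_{\epsilon_n}}(w_0,\alpha\epsilon_n)\ge\cH_{\Phi_{\epsilon_n}}(w_n)\ge h_{\mu_0}(f)=\cH_\Phi(w_0)$. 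This yields $\liminf_n h^u_{\Phi_{\epsilon_n}}(w_0,\alpha\epsilon_n)\ge\cH_\Phi(w_0)$.

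For the reverse inequality, fix $n$ and (using the remark after \eqref{defhl}, since $w_0\in\overline B(w_0,\alpha\epsilon_n)\cap\R(\Phi_{\epsilon_n})$ is nonempty) pick $v_n\in\overline B(w_0,\alpha\epsilon_n)\cap\R(\Phi_{\epsilon_n})$ attaining the maximum, together with $\nu_n\in\cM$ with $\rv_{\Phi_{\epsilon_n}}(\nu_n)=v_n$ and $h_{\nu_n}(f)=\cH_{\Phi_{\epsilon_n}}(v_n)=h^u_{\Phi_{\epsilon_n}}(w_0,\alpha\epsilon_n)$. Then $\|\rv_\Phi(\nu_n)-w_0\|\le\|\rv_\Phi(\nu_n)-v_n\|+\|v_n-w_0\|<\epsilon_n+\alpha\epsilon_n=(\alpha+1)\epsilon_n\to 0$, so $\rv_\Phi(\nu_n)\to w_0$. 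By weak$^\ast$ compactness of $\cM$, pass to a subsequence with $\nu_n\to\nu$; then $\rv_\Phi(\nu)=w_0$, i.e.\ $\nu\in\cM_\Phi(w_0)$, and by upper semi-continuity of the entropy map, $\limsup_n h^u_{\Phi_{\epsilon_n}}(w_0,\alpha\epsilon_n)=\limsup_n h_{\nu_n}(f)\le h_\nu(f)\le\cH_\Phi(w_0)$. Combining the two bounds proves (i). (The main — though modest — obstacle here is making sure the $\sup$ is attained so that one genuinely has a maximizing measure $\nu_n$; this is handled by the continuity of $\cH_{\Phi_{\epsilon_n}}$ together with compactness of $\overline B(w_0,\alpha\epsilon_n)\cap\R(\Phi_{\epsilon_n})$, exactly as noted after \eqref{defhl}.)

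For (ii), the claim is a purely geometric containment: if $\alpha>1$ and $\epsilon_{n+1}<\frac{\alpha-1}{\alpha+1}\epsilon_n$, then I want to show
\begin{equation*}
\overline B(w_0,\alpha\epsilon_{n+1})\cap\R(\Phi_{\epsilon_{n+1}})\ \subset\ \overline B(w_0,\alpha\epsilon_n)\cap\R(\Phi_{\epsilon_n}),
\end{equation*}
after which monotonicity of $h^u$ in its arguments would give $h^u_{\Phi_{\epsilon_{n+1}}}(w_0,\alpha\epsilon_{n+1})\le h^u_{\Phi_{\epsilon_n}}(w_0,\alpha\epsilon_n)$ — wait, this is not literally monotone because the two $h^u$'s are taken with respect to different potentials. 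The correct argument is instead: let $w\in\overline B(w_0,\alpha\epsilon_{n+1})\cap\R(\Phi_{\epsilon_{n+1}})$ attain $h^u_{\Phi_{\epsilon_{n+1}}}(w_0,\alpha\epsilon_{n+1})$, realized by a measure $\mu$ with $h_\mu(f)=\cH_{\Phi_{\epsilon_{n+1}}}(w)$. Then $w':=\rv_{\Phi_{\epsilon_n}}(\mu)$ satisfies $\|w'-w_0\|\le\|w'-w\|+\|w-w_0\|<(\epsilon_n+\epsilon_{n+1})+\alpha\epsilon_{n+1}=\epsilon_n+(\alpha+1)\epsilon_{n+1}<\epsilon_n+(\alpha-1)\epsilon_n=\alpha\epsilon_n$, using $\|w'-w\|\le\|\rv_{\Phi_{\epsilon_n}}(\mu)-\rv_\Phi(\mu)\|+\|\rv_\Phi(\mu)-\rv_{\Phi_{\epsilon_{n+1}}}(\mu)\|<\epsilon_n+\epsilon_{n+1}$. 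Hence $w'\in\overline B(w_0,\alpha\epsilon_n)\cap\R(\Phi_{\epsilon_n})$, so $h^u_{\Phi_{\epsilon_n}}(w_0,\alpha\epsilon_n)\ge\cH_{\Phi_{\epsilon_n}}(w')\ge h_\mu(f)=\cH_{\Phi_{\epsilon_{n+1}}}(w)=h^u_{\Phi_{\epsilon_{n+1}}}(w_0,\alpha\epsilon_{n+1})$, which is exactly the asserted monotonicity. The only place care is needed is the constant bookkeeping, i.e.\ checking that the hypothesis $\epsilon_{n+1}<\frac{\alpha-1}{\alpha+1}\epsilon_n$ is precisely what makes $\epsilon_n+(\alpha+1)\epsilon_{n+1}<\alpha\epsilon_n$; everything else is the same triangle-inequality estimate used throughout. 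I expect the bookkeeping in (ii) to be the trickiest part, but it is routine.
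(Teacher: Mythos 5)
Your proposal follows essentially the same route as the paper: for (i), a localized measure of maximal entropy at $w_0$ gives the lower bound, and a compactness/upper-semi-continuity argument on maximizing measures $\nu_n$ gives the upper bound; for (ii), the same triangle-inequality bookkeeping showing $\rv_{\Phi_{\epsilon_n}}(\mu_{n+1})\in\overline{B}(w_0,\alpha\epsilon_n)$ is exactly the paper's argument. One small point to tighten in (i): you pass to a convergent subsequence $\nu_{n_k}\to\nu$ and then assert $\limsup_n h_{\nu_n}(f)\le h_\nu(f)$, but upper semi-continuity only controls $\limsup_k h_{\nu_{n_k}}(f)$; the paper fixes this by first extracting a subsequence along which $h_{\nu_n}(f)$ achieves its limsup, and only then passing to a further weak$^\ast$-convergent subsequence, so that the two limsups coincide.
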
\begin{proof}
To prove {\em (i)}, we  observe that since the map $\nu\mapsto h_\nu(f)$ is upper semi-continuous on $\cM$, there exists $\mu\in \cM_\Phi(w_0)$ with $h_\mu(f)=\cH_\Phi(w_0)$. It now follows from $\|\Phi-\Phi_{\epsilon_n} \|_\infty < \epsilon_n$, that $\rv_{\Phi_{\epsilon_n}}(\mu)\in B(w_0,\epsilon_n)$. Since $\alpha\epsilon_n\geq\epsilon_n$, 
\begin{equation}\label{eq44}
\cH_\Phi(w_0)=h_\mu(f)\leq h^u_{\Phi_{\epsilon_n}}(w_0,\alpha \epsilon_n)
\end{equation}
for all $n\in \bN$.
 It follows from the upper semi-continuity of $\nu\mapsto h_\nu(f)$ that we can pick, for each $n\in \bN$, an invariant measure $\mu_n$ with $\rv_{\Phi_{\epsilon_n}}(\mu_n)\in \overline{B}(w_0,\alpha \epsilon_n)$ and
\begin{equation}\label{eq55}
h_{\mu_n}(f)=   h^u_{\Phi_{\epsilon_n}}(w_0,\alpha \epsilon_n).
\end{equation}
We claim  that $\limsup_{n\to\infty} h_{\mu_n}(f)\leq \cH_\Phi(w_0)$. To prove the claim, we consider a subsequence $(n_i)_i $ such that 
\begin{equation}\label{eq444}
\lim_{i\to\infty}  h_{\mu_{n_i}}(f)=\limsup_{n\to\infty} h_{\mu_n}(f)\ \  \mbox{and}\ \ \lim_{i\to \infty} \mu_{n_i} = \nu
\end{equation}
 for some $\nu\in \cM$. The existence  of $\nu$ follows from the compactness of $\cM$.
We obtain
\begin{align*}
\|&\rv_\Phi(\nu)-w_0\|\\
&\leq\|\rv_\Phi(\nu)-\rv_{\Phi}(\mu_{n_i})\|+\|\rv_{\Phi}(\mu_{n_i})-\rv_{\Phi_{\epsilon_{n_i}}}(\mu_{n_i})\|
  + \|\rv_{\Phi_{\epsilon_{n_i}}}(\mu_{n_i})-w_0\| \\
  &<  \|\rv_{\Phi}(\nu)-\rv_{\Phi}(\mu_{n_i})\| + \epsilon_{n_i} + \alpha \epsilon_{n_i}\to 0 \text{ as } \   i\to \infty.
  \end{align*}
We conclude that $\rv_\Phi(\nu)=w_0$, which implies $h_{\nu}(f)\leq \cH_\Phi(w_0)$. On the other hand, the definition of $\nu$ in Equation \eqref{eq444}, in combination with the upper semi-continuity of $\nu\mapsto h_\nu(f)$, implies that $h_\nu(f)\geq\limsup_{n\to\infty} h_{\mu_n}(f)$. We conclude that
\begin{equation}\label{eqclaim}
\limsup_{n\to\infty} h_{\mu_n}(f)\leq \cH_\Phi(w_0)
\end{equation}
 and the claim is proven. Finally, combining Equation \eqref{eqclaim} with Equation \eqref{eq44} and Equation \eqref{eq55} completes the proof of $(i)$.
 
 Next, we prove $(ii)$: Let $\mu_n$ be as in Equation \eqref{eq55}. We claim that $\rv_{\Phi_{\epsilon_n}}(\mu_{n+1})\in \overline{B}(w_0,\alpha \epsilon_n)$.
 We have
 \begin{align*}
\|&\rv_{\Phi_{\epsilon_n}}(\mu_{n+1})-w_0\|\\
&\leq  \|\rv_{\Phi_{\epsilon_n}}(\mu_{n+1})-\rv_\Phi(\mu_{n+1})\|+\|\rv_\Phi(\mu_{n+1})-\rv_{\Phi_{\epsilon_{n+1}}}(\mu_{n+1})\|\\
&\hspace*{3in}  + \|\rv_{\Phi_{\epsilon_{n+1}}}(\mu_{n+1})-w_0\| \\
  &<  \epsilon_{n} +\epsilon_{n+1}+ \alpha \epsilon_{n+1}<\alpha \epsilon_n.
\end{align*}
The final inequality comes from the assumed relationship between $\epsilon_n$ and $\epsilon_{n+1}$ in the theorem statement.  Moreover, this inequality proves the claim.  Finally, statement {\em (ii)} follows from  the definition of $h^u_{\Phi_{\epsilon_n}}(w_0,\alpha \epsilon_n)$ and Equation \eqref{eq55}.
\end{proof}

Next, we consider rotation vectors in the interior of the rotation set. Our  goal is to strengthen Proposition \ref{prop101} for interior points.  We need the following elementary Lemma:
 
 \begin{lemma}\label{lemhs}
 Let $m\in\bN$ and $w_0\in\mathbb{R}^m$.  For all $\epsilon>0$, there exist $w_1,\dots,w_{2m}\in \overline{B}(w_0,2\sqrt{m}\epsilon)$ such that for all $\widetilde{w}_1,\dots,\widetilde{w}_{2m}$ with $\|w_i-\widetilde{w}_i\|<\epsilon$ for $i\in\{1,\dots,2m\}$, we have $B(w_0,\epsilon)\subset\conv(\widetilde{w}_1,\dots,\widetilde{w}_{2m})$.
 \end{lemma}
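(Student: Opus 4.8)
The plan is to choose the $w_i$ as the vertices of a large regular cross-polytope (the scaled standard orthoplex) centered at $w_0$. Concretely, fix $\epsilon>0$, let $\lambda = 2\sqrt{m}\,\epsilon$, and set $w_{2i-1} = w_0 + \lambda e_i$ and $w_{2i} = w_0 - \lambda e_i$ for $i\in\{1,\dots,m\}$, where $e_1,\dots,e_m$ is the standard basis of $\mathbb{R}^m$. Since $\|w_j - w_0\| = \lambda = 2\sqrt{m}\,\epsilon$, all $2m$ points lie in $\overline{B}(w_0,2\sqrt{m}\epsilon)$ as required. The convex hull $\conv(w_1,\dots,w_{2m})$ is the cross-polytope $\{w_0 + x : \|x\|_1 \le \lambda\}$, which contains the Euclidean ball $\overline{B}(w_0,\lambda/\sqrt{m}) = \overline{B}(w_0,2\epsilon)$, because $\|x\|_2 \le 2\epsilon$ forces $\|x\|_1 \le \sqrt{m}\,\|x\|_2 \le 2\sqrt{m}\epsilon = \lambda$.

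Next I would handle the perturbation. Suppose $\|w_i - \widetilde w_i\| < \epsilon$ for all $i$. The key point is a stability estimate for convex hulls under vertex perturbation: if $y \in \conv(w_1,\dots,w_{2m})$ then there is a point $\widetilde y \in \conv(\widetilde w_1,\dots,\widetilde w_{2m})$ with $\|y - \widetilde y\| < \epsilon$. Indeed, writing $y = \sum_j t_j w_j$ with $t_j \ge 0$, $\sum_j t_j = 1$, and setting $\widetilde y = \sum_j t_j \widetilde w_j$, we get $\|y - \widetilde y\| \le \sum_j t_j \|w_j - \widetilde w_j\| < \epsilon$. Now take any $w \in B(w_0,\epsilon)$. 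Then the whole ball $\overline{B}(w,\epsilon) \subset \overline{B}(w_0,2\epsilon) \subset \conv(w_1,\dots,w_{2m})$. I claim $w \in \conv(\widetilde w_1,\dots,\widetilde w_{2m})$: if not, by the separating hyperplane theorem there is a unit vector $u$ with $\langle u, w\rangle > \langle u, \widetilde w_j\rangle$ for all $j$, hence $\langle u, w\rangle > \langle u, \widetilde y\rangle$ for every $\widetilde y \in \conv(\widetilde w_1,\dots,\widetilde w_{2m})$. But the point $y = w + \epsilon u \in \overline{B}(w,\epsilon) \subset \conv(w_1,\dots,w_{2m})$, so by the stability estimate there is $\widetilde y \in \conv(\widetilde w_1,\dots,\widetilde w_{2m})$ with $\|y - \widetilde y\| < \epsilon$, giving $\langle u, \widetilde y\rangle > \langle u, y\rangle - \epsilon = \langle u, w\rangle$, a contradiction. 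Therefore $B(w_0,\epsilon) \subset \conv(\widetilde w_1,\dots,\widetilde w_{2m})$.

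I expect the main (minor) obstacle to be getting the constants to line up cleanly: one needs the gap between the inner ball $\overline{B}(w_0,2\epsilon)$ that the unperturbed cross-polytope contains and the target ball $B(w_0,\epsilon)$ to absorb the $\epsilon$-sized perturbation of each vertex, which is exactly what the separating-hyperplane argument above exploits (the extra margin of $\epsilon$ in the radius is what makes the contradiction go through). An alternative, slightly more computational route would be to verify directly that each facet of $\conv(\widetilde w_1,\dots,\widetilde w_{2m})$ stays at distance more than $\epsilon$ from $w_0$; this reduces to estimating how much a facet hyperplane of the cross-polytope can tilt and shift when its $m$ defining vertices each move by less than $\epsilon$, but the hyperplane-separation argument avoids this case analysis and is cleaner, so that is the version I would write up.
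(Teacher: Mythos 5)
Your proof is correct, and it takes a genuinely different route from the paper's. The paper (after translating so that $w_0 = 0$) chooses the $w_i$'s to be the points whose every coordinate is $\pm 2\epsilon$, i.e., the $2^m$ vertices of a hypercube, and then proves $[-\epsilon,\epsilon]^m \subset \conv(\widetilde w_1,\dots,\widetilde w_{2^m})$ by an explicit coordinate-wise induction on the dimension $m$, pairing vertices that differ only in the last coordinate. You instead take the $2m$ vertices of a cross-polytope ($\pm 2\sqrt{m}\,\epsilon\, e_i$ relative to $w_0$), show its convex hull (an $\ell^1$-ball) contains $\overline{B}(w_0,2\epsilon)$ via the norm comparison $\|x\|_1 \le \sqrt{m}\,\|x\|_2$, and handle the perturbation by a separating-hyperplane argument combined with the Lipschitz estimate $\|\sum_j t_j w_j - \sum_j t_j \widetilde w_j\| \le \max_j \|w_j - \widetilde w_j\|$. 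Your version has two concrete advantages: it uses exactly $2m$ points, agreeing with the lemma statement as written, whereas the paper's proof actually uses $2^m$ points (the statement and proof do not quite match for $m\ge 3$, though this has no downstream effect since the later arguments only need some finite set of points); and the separating-hyperplane step replaces the somewhat delicate induction with a short, conceptual stability argument that generalizes readily. The paper's version has the merit of being entirely coordinate-explicit, which can be reassuring when one wants to verify the constants by hand.
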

 
 \begin{proof}
By translation invariance, we may assume, without loss of generality, that $w_0=0$.  Let the $w_i$'s be the points with coordinates $\pm2\epsilon$.  We prove, by induction, that $[-\epsilon,\epsilon]^m\subset\conv(\widetilde{w}_1,\dots,\widetilde{w}_{2m})$.  When $m=1$, there are two points $w_1=2\epsilon$ and $w_2=-2\epsilon$.  By construction, we know that $\widetilde{w}_1>\epsilon$ and $\widetilde{w}_2<-\epsilon$.  Therefore, $\conv(\widetilde{w}_1,\widetilde{w}_2)=[\widetilde{w}_2,\widetilde{w}_1]$, which contains $[-\epsilon,\epsilon]$.

When $m>1$, let $p\in[-\epsilon,\epsilon]^m$.  For each vector $j\in\{\pm 1\}^{m-1}$, we pair the $w_i$'s which agree in the first $m-1$ coordinates.  In particular, let $w_{j,+}$ and $w_{j,-}$ be the $w_i$'s whose first $(m-1)$ coordinates are given by $(w_{j,\ast})_k=2j_k\epsilon$, but whose last coordinate differs, i.e., $(w_{j,+})_m=2\epsilon$, and $(w_{j,-})_m=-2\epsilon$.  By the base case, we know that there is a convex combination $v_j$ of $\widetilde{w}_{j,+}$ and $\widetilde{w}_{j,-}$ such that $(v_j)_m=p_m$.  Let $\pi_m$ be the projection that ignores the last coordinate.  Observe that $\pi_m(v_j)$ is within $\epsilon$ of $(2j_k\epsilon)_{k\in\{1,\dots,m-1\}}$.  Then, by applying the inductive hypothesis to the $\pi_m(v_j)$'s, we get that $p\in\conv(v_j)$.  Since, moreover, each $v_j$ is a convex combination of the $w_i$'s, it follows that $p\in \conv(\widetilde{w}_1,\dots,\widetilde{w}_{2m})$.  Since $p$ is arbitrary, the claim holds.  The desired result holds since $B(w_0,\epsilon)\subset [-\epsilon,\epsilon]^m$.
\end{proof}

 \begin{theorem}\label{thmrightleft}
 Let  $\Phi\in C(X, \bR^m)$, and let $(\Phi_{\epsilon_n})_{n}$ be an approximating sequence  of $\Phi$. Let $w_0\in \inn \R(\Phi)$, $\alpha\geq 1$
 and $r=2\sqrt{m}$.
Then 
\begin{enumerate}[(i)]
\item \label{eqhul} $\lim_{n\to\infty} h^l_{\Phi_{\epsilon_n}}(w_0, \alpha \epsilon_n)=\cH_\Phi(w_0)=\lim_{n\to\infty} h^u_{\Phi_{\epsilon_n}}(w_0, \alpha\epsilon_n)$;
\item If $\alpha>r$ and $\epsilon_{n+1}<\frac{\alpha-r}{\alpha r} \epsilon_{n}$ for all $n\in\bN$ then $(h^l_{\Phi_{\epsilon_n}}(w_0,\alpha \epsilon_n))_n$ is an increasing sequence for $n$ sufficiently large.
\end{enumerate}
\end{theorem}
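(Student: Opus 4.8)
The plan is to prove the ``$h^{l}$''-statements, since the right-hand identity $\lim_{n\to\infty}h^u_{\Phi_{\epsilon_n}}(w_0,\alpha\epsilon_n)=\cH_\Phi(w_0)$ in {\em (i)} is precisely Proposition~\ref{prop101}{\em (i)}. Write $r=2\sqrt m$ and put
\[
M_n=\min\{\cH_\Phi(z):z\in\overline{B}(w_0,r\alpha\epsilon_n)\cap\R(\Phi)\},\qquad M_n'=\min\{\cH_\Phi(z):z\in\overline{B}(w_0,(\alpha-r)\epsilon_n)\cap\R(\Phi)\};
\]
for $n$ large both balls lie in $\R(\Phi)$ (recall $w_0\in\inn\R(\Phi)$), and $M_n,M_n'\to\cH_\Phi(w_0)$ by continuity of $\cH_\Phi$. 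I shall use three elementary facts: $d_H(\R(\Phi),\R(\Phi_{\epsilon_n}))\le\epsilon_n$ (bring the norm inside the integral, as in the proof of Theorem~\ref{thmrotcomp}); the function $\cH_\Phi$ is concave (the entropy map is affine on $\cM$, so a convex combination of localized measures of maximal entropy at $w_1,w_2$ is a competitor at the corresponding combination of $w_1,w_2$); and, for $n$ large, any point whose closed $\epsilon_n$-neighborhood is contained in $\R(\Phi)$ already lies in $\R(\Phi_{\epsilon_n})$ (separate it from the convex set $\R(\Phi_{\epsilon_n})$ and use the Hausdorff bound).

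For {\em (i)}: the estimate $\limsup_n h^l_{\Phi_{\epsilon_n}}(w_0,\alpha\epsilon_n)\le\cH_\Phi(w_0)$ follows because $\overline{B}(w_0,\alpha\epsilon_n)\cap\R(\Phi_{\epsilon_n})$ is nonempty (it contains $\rv_{\Phi_{\epsilon_n}}(\mu)$ for any $\mu$ with $\rv_\Phi(\mu)=w_0$, using $\alpha\ge1$), so $h^l_{\Phi_{\epsilon_n}}(w_0,\alpha\epsilon_n)\le h^u_{\Phi_{\epsilon_n}}(w_0,\alpha\epsilon_n)$ and Proposition~\ref{prop101}{\em (i)} finishes this half. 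For the reverse I will apply Lemma~\ref{lemhs} at the center $w_0$ with radius $\alpha\epsilon_n$ (for $n$ large enough that $\overline{B}(w_0,r\alpha\epsilon_n)\subset\R(\Phi)$), getting points $w_1,\dots,w_{2m}\in\overline{B}(w_0,r\alpha\epsilon_n)$; choosing (by upper semi-continuity of $\mu\mapsto h_\mu(f)$) localized measures of maximal entropy $\mu_i\in\cM_\Phi(w_i)$ and observing $\|\rv_{\Phi_{\epsilon_n}}(\mu_i)-w_i\|<\epsilon_n\le\alpha\epsilon_n$, the lemma gives $\overline{B}(w_0,\alpha\epsilon_n)\subset\conv(\rv_{\Phi_{\epsilon_n}}(\mu_1),\dots,\rv_{\Phi_{\epsilon_n}}(\mu_{2m}))$. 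Then any $w\in\overline{B}(w_0,\alpha\epsilon_n)\cap\R(\Phi_{\epsilon_n})$ equals $\sum_i t_i\rv_{\Phi_{\epsilon_n}}(\mu_i)$, so $\nu=\sum_i t_i\mu_i$ satisfies $\rv_{\Phi_{\epsilon_n}}(\nu)=w$ and, by affineness of entropy, $h_\nu(f)=\sum_i t_i\cH_\Phi(w_i)\ge M_n$; hence $h^l_{\Phi_{\epsilon_n}}(w_0,\alpha\epsilon_n)\ge M_n\to\cH_\Phi(w_0)$, which proves {\em (i)}.

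For {\em (ii)}: the lower bound $h^l_{\Phi_{\epsilon_n}}(w_0,\alpha\epsilon_n)\ge M_n$ just obtained will be complemented by the upper bound $h^l_{\Phi_{\epsilon_n}}(w_0,\alpha\epsilon_n)\le M_n'$, valid for $n$ large. The first ingredient is that $\rv_{\Phi_{\epsilon_n}}(\mu)=w$ forces $\rv_\Phi(\mu)\in\overline{B}(w,\epsilon_n)$, whence $\cH_{\Phi_{\epsilon_n}}(w)\le\max\{\cH_\Phi(y):y\in\overline{B}(w,\epsilon_n)\cap\R(\Phi)\}$. The second is that if $z^{\ast}=w_0+(\alpha-r)\epsilon_n v$ (with $v$ a unit vector) minimizes the concave function $\cH_\Phi$ over $\overline{B}(w_0,(\alpha-r)\epsilon_n)$, then $z^{\ast}$ lies on the boundary and, since any supergradient of $\cH_\Phi$ at $z^{\ast}$ must point in the $-v$ direction, $\cH_\Phi(y)\le\cH_\Phi(z^{\ast})=M_n'$ for every $y\in\R(\Phi)$ with $\langle v,y-w_0\rangle\ge(\alpha-r)\epsilon_n$. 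Now choose the off-center witness $w=w_0+(\alpha-1)\epsilon_n v$: it lies in $\overline{B}(w_0,\alpha\epsilon_n)$ and, for $n$ large, in $\R(\Phi_{\epsilon_n})$, and every $y\in\overline{B}(w,\epsilon_n)$ satisfies $\langle v,y-w_0\rangle\ge(\alpha-2)\epsilon_n\ge(\alpha-r)\epsilon_n$ because $r=2\sqrt m\ge2$. Combining the two ingredients gives $\cH_{\Phi_{\epsilon_n}}(w)\le M_n'$, so $h^l_{\Phi_{\epsilon_n}}(w_0,\alpha\epsilon_n)\le M_n'$.

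Finally, the hypothesis $\epsilon_{n+1}<\frac{\alpha-r}{\alpha r}\epsilon_n$ (which presupposes $\alpha>r$) gives $r\alpha\epsilon_{n+1}<(\alpha-r)\epsilon_n$, so $\overline{B}(w_0,r\alpha\epsilon_{n+1})\subset\overline{B}(w_0,(\alpha-r)\epsilon_n)$ and $M_{n+1}\ge M_n'$; chaining the estimates yields
\[
h^l_{\Phi_{\epsilon_{n+1}}}(w_0,\alpha\epsilon_{n+1})\ge M_{n+1}\ge M_n'\ge h^l_{\Phi_{\epsilon_n}}(w_0,\alpha\epsilon_n)
\]
for all $n$ large, i.e., the asserted monotonicity. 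I expect the main obstacle to be the upper bound $h^l_{\Phi_{\epsilon_n}}(w_0,\alpha\epsilon_n)\le M_n'$: unlike in the $h^u$ case, where a single optimal measure is transported from level $n+1$ to level $n$, here one must \emph{produce} a rotation vector in the ball whose localized entropy is genuinely small, which forces the off-center witness pointing toward the $\cH_\Phi$-minimizer together with concavity of $\cH_\Phi$; and it is in balancing the three radii $\alpha\epsilon_n$, $(\alpha-r)\epsilon_n$ and $r\alpha\epsilon_{n+1}$ that the conditions $\alpha>r$, $r=2\sqrt m\ge2$, and the prescribed decay of $(\epsilon_n)$ are all consumed. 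The degenerate case in which $\cH_\Phi$ is constant near $w_0$ is immediate, and the remaining verifications (the three elementary facts above, and the existence of a supergradient of $\cH_\Phi$ at the interior point $z^{\ast}$) are routine.
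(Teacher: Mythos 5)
Your proof is correct. For part \emph{(i)} the argument coincides with the paper's: Lemma \ref{lemhs} at scale $\alpha\epsilon_n$ produces $w_1,\dots,w_{2m}\in\overline{B}(w_0,r\alpha\epsilon_n)$, localized measures of maximal entropy $\mu_i$ are carried over from $\Phi$ to $\Phi_{\epsilon_n}$, concavity of $\cH_{\Phi_{\epsilon_n}}$ (affineness of entropy) gives $h^l_\Phi(w_0,r\alpha\epsilon_n)\le h^l_{\Phi_{\epsilon_n}}(w_0,\alpha\epsilon_n)$, and continuity of $\cH_\Phi$ finishes. For part \emph{(ii)} you take a genuinely different route. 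The paper compares $h^l_{\Phi_{\epsilon_n}}$ and $h^l_{\Phi_{\epsilon_{n+1}}}$ directly: it chooses measures optimal for $\Phi_{\epsilon_n}$ at the Lemma \ref{lemhs} configuration and transports their rotation vectors to the next level using $\|\Phi_{\epsilon_n}-\Phi_{\epsilon_{n+1}}\|_\infty\le\epsilon_n+\epsilon_{n+1}$, so that $B(w_0,\alpha\epsilon_{n+1}+\epsilon_n)$ is captured and the radius comparison $\alpha\epsilon_{n+1}\le\alpha\epsilon_{n+1}+\epsilon_n<\alpha\epsilon_n$ (plus the $r$-scaling hidden in Lemma \ref{lemhs}) yields monotonicity. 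You instead sandwich $h^l_{\Phi_{\epsilon_n}}(w_0,\alpha\epsilon_n)$ between two quantities $M_n=h^l_\Phi(w_0,r\alpha\epsilon_n)$ and $M_n'=h^l_\Phi(w_0,(\alpha-r)\epsilon_n)$ expressed purely in terms of the \emph{limit} potential $\Phi$, and deduce monotonicity from the ball nesting $r\alpha\epsilon_{n+1}<(\alpha-r)\epsilon_n$ forced by the decay hypothesis. The new ingredient is the upper bound $h^l_{\Phi_{\epsilon_n}}(w_0,\alpha\epsilon_n)\le M_n'$, which the paper does not need: you place an off-center witness $w=w_0+(\alpha-1)\epsilon_n v$ toward a boundary minimizer $z^\ast$ of $\cH_\Phi$ over the smaller ball, and use a supergradient of the concave $\cH_\Phi$ at $z^\ast$, necessarily a nonnegative multiple of $-v$, to bound $\cH_\Phi$ on the half-space $\{\langle v,\cdot-w_0\rangle\ge(\alpha-r)\epsilon_n\}$ by $M_n'$, hence $\cH_{\Phi_{\epsilon_n}}(w)\le M_n'$ via the usual $\epsilon_n$-transport. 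The supporting observations (Hausdorff bound, concavity of $\cH_\Phi$, and the fact that $\overline{B}(w,\epsilon_n)\subset\R(\Phi)$ implies $w\in\R(\Phi_{\epsilon_n})$ by a separating-hyperplane argument) are all correct. Your version buys conceptual transparency: it makes explicit that the approximate localized infima are being squeezed by localized infima of $\cH_\Phi$ at two radii, one of which shrinks past the other; the paper's version buys economy, never leaving the approximating potentials and never invoking supergradients. Both consume the hypothesis $\epsilon_{n+1}<\frac{\alpha-r}{\alpha r}\epsilon_n$ in essentially the same place.
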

\begin{proof}
We first prove {\em (i)}.  The second equality in {\em (i)} was shown in Proposition \ref{prop101}. To prove the first equality, let $n\in\mathbb{N}$ so that $\overline{B}(w_0,r\alpha\epsilon_n)\subset \inn \R(\Phi)$.  Let $w_1,\dots,w_{2m}\in\overline{B}(w_0,r\alpha\epsilon_n)$ be the points constructed in Lemma \ref{lemhs}.  Since $\nu\mapsto h_\nu(f)$ is upper semi-continuous, there exist $\mu_1,\dots,\mu_{2m}\in \cM$ with $\rv_\Phi(\mu_i)=w_i$ and $h_{\mu_i}(f)=\cH_\Phi(w_i)$ for all $i\in\{1,\dots,2m\}$.  Let $\widetilde{w}_i\eqdef\rv_{\Phi_{\epsilon_n}}(\mu_i)$ for $i\in\{1,\dots,2m\}$.

First, we observe that $\widetilde{w}_i\in B(w_i,\alpha\epsilon_n)$ for $i\in\{1,\dots,2m\}$ since $\|\Phi-\Phi_{\epsilon_n}\|_\infty<\epsilon_n\leq\alpha\epsilon_n$.  
It follows from the definition of $h_\Phi^l(w_0,r\alpha\epsilon_n)$ and from $h_{\mu_i}(f)\leq \cH_{\Phi_{\epsilon_n}}(\widetilde{w}_i)$ that
\begin{equation}\label{eqin2}
\begin{split}
h_\Phi^l(w_0,r\alpha\epsilon_n)& \leq \min\{h_{\mu_1}(f),\dots,h_{\mu_{2m}}(f)\}\\
&\leq \min\{\cH_{\Phi_{\epsilon_n}}(\widetilde{w}_1),\dots, \cH_{\Phi_{\epsilon_n}}(\widetilde{w}_{2m})\}.
\end{split}
\end{equation}
By Lemma \ref{lemhs}, it follows that $B(w_0,\alpha \epsilon_n)\subset \conv(\widetilde{w}_1,\dots,\widetilde{w}_{2m})$.   The convexity of $\nu\mapsto h_\nu(f)$ implies that 
\begin{equation}\label{eqin3}
\min\{\cH_{\Phi_{\epsilon_n}}(\widetilde{w}_1),\dots, \cH_{\Phi_{\epsilon_n}}(\widetilde{w}_{2m})\}\leq h^l_{\Phi_{\epsilon_n}}(w_0, \alpha\epsilon_n).
\end{equation}
By combining Inequalities \eqref{eqin2} and \eqref{eqin3}, we obtain
$$
h_\Phi^l(w_0,r\alpha\epsilon_n)\leq h^l_{\Phi_{\epsilon_n}}(w_0, \alpha\epsilon_n).
$$
Now, taking the limit as $n\rightarrow\infty$ and using the fact that $\cH_\Phi$ is continuous results in 
\begin{equation}\label{eq33}
\cH_\Phi(w_0)=\lim_{n\to \infty} h_\Phi^l(w_0,r\alpha\epsilon_n)\leq \lim_{n\to\infty} h^l_{\Phi_{\epsilon_n}}(w_0, \alpha\epsilon_n).
\end{equation}
Observe that 
 \begin{equation}
 \lim_{n\to\infty}h^l_{\Phi_{\epsilon_n}}(w_0, \alpha \epsilon_n)\leq \lim_{n\to\infty}h^u_{\Phi_{\epsilon_n}}(w_0, \alpha\epsilon_n).
 \end{equation}
Therefore, {\em (i)}  follows from Inequality \eqref{eq33} and Proposition \ref{prop101}.

The proof of {\em (ii)} is similar to the proof of {\em (i)}, so we omit many of the details.  Suppose that $n$ is large enough so that $\overline{B}(w_0,r(\alpha\epsilon_{n+1}+\epsilon_n))\subset \R(\Phi)$.  We may then choose $w_1,\dots,w_{2m}\in\overline{B}(w_0,r(\alpha\epsilon_{n+1}+\epsilon_n))$ as in Lemma \ref{lemhs}.  By upper semi-continuity, there exist $\mu_1,\dots,\mu_{2m}\in \cM$ such that $\rv_{\Phi_{\epsilon_n}}(\mu_i)=w_i$ and $h_{\mu_i}(f)=\cH_{\Phi_{\epsilon_n}}(w_i)$ for all $i\in\{1,\dots,2m\}$.  Define $\widetilde{w}_i=\rv_{\Phi_{\epsilon_{n+1}}}(\mu_i)$ for $i\in\{1,\dots,2m\}$.  Observe that since $\|\Phi_{\epsilon_n}-\Phi_{\epsilon_{n+1}}\|_\infty\leq\epsilon_n+\epsilon_{n+1}\leq\epsilon_n+\alpha\epsilon_{n+1}$,  $\widetilde{w}_i\in B(w_i,\alpha\epsilon_{n+1}+\epsilon_n)$.

Since $\widetilde{w}_i=\rv_{\Phi_{\epsilon_{n+1}}}(\mu_i)$, it follows that $h_{\mu_i}(f)\leq\cH_{\Phi_{\epsilon_{n+1}}}(\widetilde{w}_i)$.  Since $\rv(\mu_i)\in B(w_0,\alpha\epsilon_{n+1}+\epsilon_n)$, we know that
\begin{equation}\label{eqin3p}
h^l_{\Phi_{\epsilon_n}}(w_0,\alpha\epsilon_{n+1}+\epsilon_n)\leq\min\{\cH_{\Phi_{\epsilon_{n+1}}}(\widetilde{w}_1),\dots,\cH_{\Phi_{\epsilon_{n+1}}}(\widetilde{w}_{2m})\}.
\end{equation}
By Lemma \ref{lemhs}, it follows that $B(w_0,\alpha \epsilon_{n+1}+\epsilon_n)\subset \conv(\widetilde{w}_1,\dots,\widetilde{w}_{2m})$.   The convexity of $\nu\mapsto h_\nu(f)$ implies that 
\begin{equation}\label{eq33p}
\min\{\cH_{\Phi_{\epsilon_{n+1}}}(\widetilde{w}_1),\dots,\cH_{\Phi_{\epsilon_{n+1}}}(\widetilde{w}_{2m})\}\leq h^l_{\Phi_{\epsilon_{n+1}}}(w_0,\alpha\epsilon_{n+1}+\epsilon_n).
\end{equation}
Combining Inequalities \eqref{eqin3p} and \eqref{eq33p}, we have
$$
h^l_{\Phi_{\epsilon_n}}(w_0,\alpha\epsilon_{n+1}+\epsilon_n)\leq h^l_{\Phi_{\epsilon_{n+1}}}(w_0,\alpha\epsilon_{n+1}+\epsilon_n).
$$
Since $\alpha\epsilon_n>\alpha\epsilon_{n+1}+\epsilon_n$ and $\alpha\epsilon_{n+1}\leq\alpha\epsilon_{n+1}+\epsilon_n$, by assumption, the result follows.
\end{proof}
Next, we extend the entropy function $\cH$ by considering  $\Phi$ as a variable. 
\begin{definition}\label{def:parameter}
Let $T\subset C(X,\mathbb{R}^m)\times\mathbb{R}^m$ be the {\em (total) parameter space} of the rotation sets.  In other words, the fibers of the projection onto the first factor are the rotation sets, so for $\Phi\in C(X,\mathbb{R}^m)$, $\pi_1^{-1}(\Phi)=\{\Phi\}\times\R(\Phi)$.  Set theoretically 
$$
T=\bigcup_{\Phi\in C(X,\bR^m)} \{\Phi\}\times \R(\Phi).
$$
\end{definition}

 As a consequence of Theorem \ref{thmrightleft} we obtain the following:

\begin{theorem}\label{thmglobentcont}
Let $f:X\to X$ be a continuous map on a compact metric space such that $\mu\mapsto h_\mu(f)$ is upper semi-continuous.  Then the global entropy function is continuous on $\bigcup_{\Phi\in C(X,\bR^m)} \{\Phi\}\times {\rm int\,}\R(\Phi)$ (cf Definition \ref{def:parameter}).
\end{theorem}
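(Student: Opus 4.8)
The plan is to deduce continuity of the global entropy function $(\Phi, w) \mapsto \cH_\Phi(w)$ on the open parameter region $\bigcup_\Phi \{\Phi\} \times \inn \R(\Phi)$ directly from the uniform convergence statement in Theorem \ref{thmrightleft}(i). Fix a point $(\Phi, w_0)$ with $w_0 \in \inn \R(\Phi)$ and let $(\Psi_k, w_k)$ be any sequence in the parameter space converging to $(\Phi, w_0)$, i.e.\ $\|\Psi_k - \Phi\|_\infty \to 0$ and $w_k \to w_0$ with $w_k \in \R(\Psi_k)$. I want to show $\cH_{\Psi_k}(w_k) \to \cH_\Phi(w_0)$. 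The idea is to treat the tail of the sequence $(\Psi_k)$ as an approximating sequence for $\Phi$ (after passing to a subsequence so the errors decrease, or simply by bookkeeping with $\epsilon_k \eqdef \|\Psi_k - \Phi\|_\infty + 2^{-k}$), and to sandwich $\cH_{\Psi_k}(w_k)$ between a minimal and a maximal local entropy of $\Psi_k$ on a small ball around $w_0$.

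\emph{Key steps.} First, since $w_0 \in \inn \R(\Phi)$, there is $\rho > 0$ with $\overline{B}(w_0, \rho) \subset \inn \R(\Phi)$. For $k$ large we have $\|w_k - w_0\| < \rho/2$ and $\|\Psi_k - \Phi\|_\infty < \rho/4$, so in particular $\overline{B}(w_k, \rho/4) \subset \R(\Psi_k)$ (any measure realizing a point near $w_0$ for $\Phi$ has a nearby rotation vector for $\Psi_k$, and conversely — this uses $\|\rv_\Phi(\mu) - \rv_{\Psi_k}(\mu)\| \le \|\Phi - \Psi_k\|_\infty$). Second, by definition of $h^l$ and $h^u$,
\begin{equation}\label{eq:glob:sandwich}
h^l_{\Psi_k}(w_0, \delta_k) \le \cH_{\Psi_k}(w_k) \le h^u_{\Psi_k}(w_0, \delta_k),
\end{equation}
provided $\|w_k - w_0\| \le \delta_k$ so that $w_k \in \overline{B}(w_0, \delta_k) \cap \R(\Psi_k)$. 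Third, choose $\delta_k = \alpha \epsilon_k$ with a fixed $\alpha > 2\sqrt{m}$ where $\epsilon_k = \max\{\|\Psi_k - \Phi\|_\infty, \|w_k - w_0\|\} + 2^{-k} \to 0$; then $(\Psi_k)$ is an approximating sequence of $\Phi$ in the sense of Section 1.2 (extracting a subsequence if needed to guarantee $\epsilon_k \to 0$ is automatic here), and $\|w_k - w_0\| \le \delta_k$ holds. Applying Theorem \ref{thmrightleft}(i) to this approximating sequence gives
\begin{equation}\label{eq:glob:squeeze}
\lim_{k \to \infty} h^l_{\Psi_k}(w_0, \alpha \epsilon_k) = \cH_\Phi(w_0) = \lim_{k \to \infty} h^u_{\Psi_k}(w_0, \alpha \epsilon_k).
\end{equation}
Combining \eqref{eq:glob:sandwich} and \eqref{eq:glob:squeeze} with the squeeze theorem yields $\cH_{\Psi_k}(w_k) \to \cH_\Phi(w_0)$. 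Since the sequence $(\Psi_k, w_k)$ was arbitrary and the parameter space is metrizable (as a subset of $C(X,\bR^m) \times \bR^m$), this establishes continuity at $(\Phi, w_0)$.

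\emph{Main obstacle.} The one genuine subtlety is making sure the hypotheses of Theorem \ref{thmrightleft}(i) are actually met: that the $\Psi_k$ form a legitimate approximating sequence with $\epsilon_n \to 0$, and — more importantly — that $w_0$ is an \emph{interior} point of $\R(\Phi)$ (not of $\R(\Psi_k)$), which is exactly what the restriction to $\inn \R(\Phi)$ in the statement guarantees. I would also need the elementary but essential point that $w_k$, which lies in $\R(\Psi_k)$ and is close to $w_0$, actually lies in the ball $\overline{B}(w_0, \alpha\epsilon_k) \cap \R(\Psi_k)$ used to define $h^l_{\Psi_k}$ and $h^u_{\Psi_k}$; this is why I pad $\epsilon_k$ to dominate $\|w_k - w_0\|$. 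Everything else is a routine squeeze argument, so the proof is short once the reduction to Theorem \ref{thmrightleft} is set up correctly.
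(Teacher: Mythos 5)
Your proposal is correct and matches the paper's intent: the paper itself gives no explicit proof, stating only that the result is "a consequence of Theorem~\ref{thmrightleft}," and your argument fills in exactly those details via the sandwich $h^l_{\Psi_k}(w_0,\alpha\epsilon_k)\le \cH_{\Psi_k}(w_k)\le h^u_{\Psi_k}(w_0,\alpha\epsilon_k)$ together with the squeeze from Theorem~\ref{thmrightleft}(i). (One small observation: part (i) of Theorem~\ref{thmrightleft} only requires $\alpha\geq 1$, so your choice $\alpha>2\sqrt m$ is more restrictive than necessary, but this is harmless.)
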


\subsection{Computability at interior points of the rotation set} We now address the question concerning the computability of the localized entropy 
for points in the interior of the rotation set.  Throughout the remainder of this section, we assume that $f:X\rightarrow X$ is a computable map on a computable compact metric space $X$ with a computable metric $d_X$.

The following result provides a computability criteria for interior points.

\begin{theorem}\label{thmHcompgen}
Let $f:X\to X$ be a continuous map on a computable compact 
metric space $X$ such that $\mu\mapsto h_\mu(f)$ is upper semi-continuous. Let $\Phi:X\to \bR^m$ be
computable and let $w_0\in \inn \R(\Phi)$. Suppose that a computable $r>0$ is given such that
$B(w_0,r)\subset \inn \R(\Phi)$. 
Suppose that there exists an approximating sequence $(\Phi_{\epsilon_n})_n$
of $\Phi$ such that $(\epsilon_n)_n$ is computable.
Suppose that there are oracles approximating the functions 
$(n,s)\mapsto h^l_{\Phi_{\epsilon_n}}(w_0, 2^{-s})$ and $(n,s)\mapsto h^u_{\Phi_{\epsilon_n}}(w_0, 2^{-s})$ to arbitrary precision, where $n\in\mathbb{N}$ and $s$ is a real number given by an oracle.  Then $\cH_\Phi(w_0)$ is computable.
\end{theorem}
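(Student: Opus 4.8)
The plan is to make the two-sided estimate of Theorem~\ref{thmrightleft} \emph{effective}: I will thin the given approximating sequence $(\Phi_{\epsilon_n})_n$ into a subsequence along which the lower local entropies increase, the upper local entropies decrease, and both converge to $\cH_\Phi(w_0)$; their difference then furnishes a computable error bound. To set up, fix the dilation constant $\kappa=2\sqrt m$ from Theorem~\ref{thmrightleft}, put $\alpha=\kappa+1$ (so $\alpha>\kappa\ge 1$ and $\alpha>1$), let $c>0$ be the smaller of the two gap constants $\frac{\alpha-\kappa}{\alpha\kappa}$ and $\frac{\alpha-1}{\alpha+1}$ occurring in Theorem~\ref{thmrightleft}(ii) and Proposition~\ref{prop101}(ii), and --- using the given computable $r$ with $B(w_0,r)\subset\inn\R(\Phi)$ --- form the positive computable number $\tau=\frac{r}{\kappa(\alpha+1)}$.

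The heart of the proof is the thinning step. Since $(\epsilon_n)_n$ is computable with $\epsilon_n>0$ and $\epsilon_n\to0$, a standard dovetailing (semi-decision) argument over the index and the output precision produces a computable strictly increasing sequence $n_0<n_1<\cdots$ with $\epsilon_{n_0}<\tau$ and $\epsilon_{n_{k+1}}<c\,\epsilon_{n_k}$ for all $k$: each such strict inequality among the computable reals involved is semi-decidable, and is eventually satisfied because $\epsilon_n\to0$. Write $\tilde\epsilon_k=\epsilon_{n_k}$. Then $(\Phi_{\epsilon_{n_k}})_k$ is an approximating sequence of $\Phi$ satisfying the gap hypotheses of both Theorem~\ref{thmrightleft}(ii) and Proposition~\ref{prop101}(ii); moreover $\kappa(\alpha\tilde\epsilon_{k+1}+\tilde\epsilon_k)<\kappa(\alpha+1)\tilde\epsilon_0<r$ for every $k$, so $\overline B(w_0,\kappa(\alpha\tilde\epsilon_{k+1}+\tilde\epsilon_k))\subset B(w_0,r)\subset\inn\R(\Phi)$. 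As this is precisely the ball containment required in the proof of Theorem~\ref{thmrightleft}(ii), the ``$n$ sufficiently large'' proviso there is in force for every $k\ge0$. This is the only place the hypothesis that $r$ is \emph{computable} enters: it renders the otherwise ineffective ``sufficiently large'' threshold effective.

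Now set $h^l_k=h^l_{\Phi_{\epsilon_{n_k}}}(w_0,\alpha\tilde\epsilon_k)$ and $h^u_k=h^u_{\Phi_{\epsilon_{n_k}}}(w_0,\alpha\tilde\epsilon_k)$. For any $\mu$ with $\rv_\Phi(\mu)=w_0$ one has $\rv_{\Phi_{\epsilon_{n_k}}}(\mu)\in B(w_0,\tilde\epsilon_k)\cap\R(\Phi_{\epsilon_{n_k}})$, so the sets in \eqref{defhu}, \eqref{defhl} are nonempty and $h^l_k\le h^u_k$ are finite (they lie in $[0,h_{\rm top}(f)]$). By Proposition~\ref{prop101}(ii), $(h^u_k)_k$ is decreasing; by Theorem~\ref{thmrightleft}(ii) together with the previous paragraph, $(h^l_k)_k$ is increasing; and by Theorem~\ref{thmrightleft}(i), both converge to $\cH_\Phi(w_0)$. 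Hence $h^l_k\le\cH_\Phi(w_0)\le h^u_k$ for all $k$, and $0\le h^u_k-h^l_k\searrow0$. The algorithm to output $\cH_\Phi(w_0)$ to within $2^{-p}$ is then: for $k=0,1,2,\dots$, compute $s_k=-\log_2(\alpha\tilde\epsilon_k)$ --- computable, since $\alpha>0$ and $\tilde\epsilon_k>0$ are computable uniformly in $k$ and $\log_2$ is computable on $(0,\infty)$ --- so that $2^{-s_k}=\alpha\tilde\epsilon_k$; feed the natural number $n_k$ and an oracle for $s_k$ to the two given oracles to obtain rationals $\hat h^l_k,\hat h^u_k$ within $2^{-p-3}$ of $h^l_k,h^u_k$; as soon as $\hat h^u_k-\hat h^l_k<2^{-p-1}$, output $\hat h^l_k$ and halt. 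The loop halts because $h^u_k-h^l_k\to0$, and at halting $|\cH_\Phi(w_0)-\hat h^l_k|\le(h^u_k-h^l_k)+2^{-p-3}\le(\hat h^u_k-\hat h^l_k)+2^{-p-2}+2^{-p-3}<2^{-p}$. Since every step is carried out by a Turing machine, $\cH_\Phi(w_0)$ is computable.

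The one genuinely delicate point is the thinning step, where we must reconcile the \emph{fixed, given} approximating sequence with (a) the two gap conditions and (b) the a priori ineffective ``sufficiently large'' requirement of Theorem~\ref{thmrightleft}(ii); (a) is handled by a routine semi-decision argument using $\epsilon_n\to0$, and (b) is handled precisely by the computability of $r$. After that, the argument is the standard packaging of a monotone-increasing / monotone-decreasing pair with a common limit into an effectively Cauchy sequence.
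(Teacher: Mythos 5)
Your proof is correct and follows essentially the same strategy as the paper's: thin $(\epsilon_n)_n$ into a computable subsequence along which Proposition~\ref{prop101}(ii) and Theorem~\ref{thmrightleft}(ii) furnish monotone bounds $h^l_k\leq\cH_\Phi(w_0)\leq h^u_k$, then query the two given oracles with the appropriate $(n_k,s_k)$ until the gap is below the target tolerance. You are in fact slightly more careful than the paper in making the ``for $n$ sufficiently large'' proviso of Theorem~\ref{thmrightleft}(ii) effective: your explicit threshold $\tau=\frac{r}{2\sqrt m(\alpha+1)}$ guarantees the ball containment $\overline B\bigl(w_0,2\sqrt m(\alpha\tilde\epsilon_{k+1}+\tilde\epsilon_k)\bigr)\subset\R(\Phi)$ required in the proof of that theorem from $k=0$ onward, whereas the paper's thinning condition $\epsilon_n<r/\alpha$ only controls $\alpha\epsilon_n$ and leaves this larger ball, and hence the starting index of monotonicity of $(h^l_k)_k$, implicit.
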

\begin{proof}
Suppose that $\psi$ is the oracle Turing machine for the computable, convergent sequence $(\epsilon_n)_n$.  Since $r$ is computable, we can take better and better approximations of $r$ until we can guarantee that $r$ is bounded away from zero.  In particular, we can compute positive upper and lower bounds for $r$.  Using this upper bound, we can find an integer $\alpha>1$ so that $\alpha>r$.

Observe that for all computable $\beta>0$ and $n_0\in\mathbb{N}$, we can compute an $n>n_0$ so that $\epsilon_n<\beta$ as follows: We know that such an $n$ exists since $(\epsilon_n)_n$ converges to $0$.  Since $\beta$ is computable, by approximating $\beta$ sufficiently well, we can find a positive lower bound on $\beta$.  Moreover, by computing the $n$-th term of the oracle Turing machine $\psi(n,n)$ for $\epsilon_n$ for each $n>n_0$, one-by-one, we eventually compute an $n$ so that $\epsilon_n$ is less than the lower bound on $\beta$, and, therefore, $\epsilon_n<\beta$.
By passing to a subsequence, we may assume that
\begin{enumerate}
\item $\epsilon_{n+1}<\epsilon_n$,
\item $\epsilon_n<\frac{r}{\alpha}$,
\item $\epsilon_{n+1}<\frac{\alpha-2\sqrt{m}}{2\alpha \sqrt{m}}\epsilon_n$, and
\item $\epsilon_{n+1}<\frac{\alpha-1}{\alpha+1}\epsilon_n$.
\end{enumerate}
Therefore, by Proposition \ref{prop101} and Theorem \ref{thmrightleft}, we know that $(h_{\Phi_{\epsilon_n}}^u(w_0,\alpha\epsilon_n))_n$ is a sequence decreasing to $\cH_{\Phi}(w_0)$ and $(h_{\Phi_{\epsilon_n}}^l(w_0,\alpha\epsilon_n))_n$ is a sequence increasing to $\cH_{\Phi}(w_0)$.  Observe that $\phi(n,\ast)$ is an oracle Turing machine for $\epsilon_n$.  Since $h^l_{\Phi_{\epsilon_n}}(w_0, \alpha \epsilon_n)$ and $h^u_{\Phi_{\epsilon_n}}(w_0, \alpha \epsilon_n)$ can be approximated to any precision and we have an oracle Turing machine for $\epsilon_n$, for any fixed $k$, we can compute approximations $l_n$ and $u_n$ of error less than $2^{-k}$ for $h^l_{\Phi_{\epsilon_n}}(w_0, \alpha \epsilon_n)$ and $h^u_{\Phi_{\epsilon_n}}(w_0, \alpha \epsilon_n)$, respectively.  Since $(h_{\Phi_{\epsilon_n}}^u(w_0,\alpha\epsilon_n))_n$ and $(h_{\Phi_{\epsilon_n}}^l(w_0,\alpha\epsilon_n))_n$ are decreasing and increasing sequences, respectively, it follows that $\cH_{\Phi}(w_0)\in [l_n-2^{-k},u_n+2^{-k}]$.  We can increase $n$, as necessary, so that $u_n-l_n<2^{-k+1}$.  Then, the entire interval has length at most $2^{-k+2}$, so the midpoint of the interval is an approximation to $\cH_{\Phi}(w_0)$ of error at most $2^{-k+1}$.  Since the choice of $k$ is arbitrary, this shows that $\cH_{\Phi}(w_0)$ is computable.
\end{proof}

\begin{remark}\label{remcons} Observe that Theorem \ref{thmHcompgen} implies that if there exists a computable function $r:\inn\R(\Phi)\rightarrow\bR$ such that for all $w\in\inn\R(\Phi)$, $B(w,r(w))\subset\inn\R(\Phi)$, and the functions $(n,s,w)\mapsto h^l_{\Phi_{\epsilon_n}}(w, 2^{-s})$ and $(n,s,w)\mapsto h^u_{\Phi_{\epsilon_n}}(w, 2^{-s})$ are computable, then $\cH_\Phi$ is computable. 
\end{remark}
 We proceed to study these conditions.
We now present a strategy to get a handle on the computability of the local maximal/minimal entropy. The main idea is to  apply the thermodynamic formalism with the goal  to identify  the localized measures of maximal entropy within a family of equilibrium states.  Recall the following from Section \ref{sec:thermodynamics}: Let $\Phi\in C(X,\bR^m)$, and let $w\in \inn \R(\Phi)$. 
Since $\mu\mapsto h_\mu(f)$ is upper semi-continuous, there exists at least one $\mu\in \cM_\Phi(w)$ with  $h_{\mu}(f)=\cH(w)$, that is, $\mu$ is 
a localized measure of maximal entropy at $w$. For $v\in \bR^m$, we consider the one-dimensional  potential $v\cdot \Phi=v_1 \Phi_1+\dots+ v_m \Phi_m$. Recall that $ES(v\cdot \Phi)$ denotes the set of equilibrium states of the one-dimensional potential $v\cdot \Phi$, see Equation \eqref{eq111}. The analogous  upper semi-continuity argument  shows that $ES(v\cdot \Phi)$ is non-empty. It is a result of Jenkinson \cite{Je} that there exists $v\in \bR^m$ and $\mu_{v\cdot\Phi}\in ES(v\cdot \Phi)$ such that $\mu_{v\cdot\Phi}$ is a localized measure of maximal entropy at $w$. Moreover, the variational principle Equation \eqref{eq111} implies that every localized measure of maximal entropy at $w$ belongs to  $ES(v\cdot \Phi)$.
The following result provides an estimate for the norm of $v$:
 \begin{proposition}\label{prop:vbound}
 Let $\Phi\in C(X,\bR^m)$. Let $v\in \bR^m\setminus \{0\}$ and let $\mu_{v\cdot\Phi}\in ES(v\cdot \Phi)$. Let $r=\dist(\rv(\mu_{v\cdot\Phi}),\partial \R(\Phi))$.  Then $||v||\leq \frac{2}{r}h_{\topo}(f)$.
 \end{proposition}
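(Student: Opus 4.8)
The plan is to exploit the variational characterization of equilibrium states together with the fact that $v\cdot\Phi$ is maximized, in the pressure sense, by $\mu_{v\cdot\Phi}$, and then test this against nearby rotation vectors. Write $w_0=\rv(\mu_{v\cdot\Phi})$, so $\dist(w_0,\partial\R(\Phi))=r$, and set $\hat v=v/\|v\|$. Since the closed ball $\overline B(w_0,r)$ is contained in $\R(\Phi)$ (using convexity of $\R(\Phi)$, or at least the point $w_0+ r\hat v\in\overline{\R(\Phi)}$), there is an invariant measure $\nu\in\cM$ with $\rv(\nu)$ within, say, any prescribed small distance of $w_0+r\hat v$; for cleanliness I would first argue there is $\nu$ with $\langle v,\rv(\nu)\rangle\ge \langle v,w_0\rangle + r\|v\| - \delta$ for every $\delta>0$, which follows because $\R(\Phi)$ is compact and convex and $w_0$ is at distance $r$ from its boundary in the direction $\hat v$.

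The key inequality comes from the variational principle. Since $\mu_{v\cdot\Phi}$ is an equilibrium state for $v\cdot\Phi$,
\begin{equation*}
P_{\rm top}(v\cdot\Phi)=h_{\mu_{v\cdot\Phi}}(f)+\langle v,\rv(\mu_{v\cdot\Phi})\rangle = h_{\mu_{v\cdot\Phi}}(f)+\langle v,w_0\rangle,
\end{equation*}
while for the measure $\nu$ chosen above,
\begin{equation*}
P_{\rm top}(v\cdot\Phi)\ge h_\nu(f)+\langle v,\rv(\nu)\rangle \ge \langle v,\rv(\nu)\rangle \ge \langle v,w_0\rangle + r\|v\|-\delta.
\end{equation*}
Combining these two lines gives $h_{\mu_{v\cdot\Phi}}(f)+\langle v,w_0\rangle \ge \langle v,w_0\rangle + r\|v\| - \delta$, hence $r\|v\|\le h_{\mu_{v\cdot\Phi}}(f)+\delta\le h_{\topo}(f)+\delta$ for every $\delta>0$, so $\|v\|\le h_{\topo}(f)/r$. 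This actually beats the claimed bound by a factor of $2$, which is reassuring — the looser constant $2/r$ leaves room in case one only wants to use a more robust argument, e.g. approximating $w_0+r\hat v$ by an interior point at distance $r/2$ so that one only produces $\nu$ with $\langle v,\rv(\nu)\rangle\ge\langle v,w_0\rangle+ (r/2)\|v\|$, giving $\|v\|\le (2/r)h_{\topo}(f)$ directly.

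The main obstacle I anticipate is the measure-selection step: producing an invariant measure whose rotation vector is (nearly) the boundary point $w_0+r\hat v$. This is where one uses that $\R(\Phi)=\{\rv(\mu):\mu\in\cM\}$ is genuinely the image of $\cM$, so every point of $\R(\Phi)$ — in particular boundary points, as limits — is realized or approximated by rotation vectors of invariant measures; compactness of $\cM$ then lets one take a limit measure $\nu$ with $\rv(\nu)=w_0+r\hat v$ exactly, although for the inequality a near-realization with error $\delta$ and then $\delta\to 0$ suffices and avoids any subtlety about whether the supremum defining the boundary direction is attained. Everything else — the two applications of the variational principle and the bound $h_{\mu_{v\cdot\Phi}}(f)\le h_{\topo}(f)$ — is immediate from the material recalled in Section \ref{sec:thermodynamics}.
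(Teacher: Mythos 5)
Your proof is correct, and the overall strategy — evaluating the variational principle once at the equilibrium state $\mu_{v\cdot\Phi}$ and once at a test measure whose rotation vector lies near the boundary of $\R(\Phi)$ in the direction $\hat v$, then comparing — is the same as the paper's. The difference lies in how the entropy of the test measure is handled. The paper bounds $h_\nu(f)\ge h_{\mu_{v\cdot\Phi}}(f)-h_{\topo}(f)$ so that $h_{\mu_{v\cdot\Phi}}(f)$ can be carried through the chain and the argument can be phrased as a contradiction with the definition of an equilibrium state; this is what introduces the factor $2$. You instead use the trivial $h_\nu(f)\ge 0$ and cancel $\langle v,w_0\rangle$ directly, which yields the sharper bound $\|v\|\le h_{\topo}(f)/r$ in fewer lines. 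Your measure-selection step is sound: since $r>0$ forces $w_0\in\inn\R(\Phi)$, the closed ball $\overline B(w_0,r)$ sits inside the closed convex set $\R(\Phi)$, and $\R(\Phi)$ is by definition the image of the compact set $\cM$ under $\rv$, so $w_0+r\hat v$ is exactly realized by some $\nu\in\cM$ (your $\delta$-approximation is a safety net you do not actually need). The only cosmetic omission, which the paper handles explicitly, is the degenerate case $r=0$, where the stated bound is vacuous; a one-line remark would complete the write-up.
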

 \begin{proof}
 If $r=0$, then $\rv(\mu_{v\cdot\Phi})\in \partial \R(\Phi)$ and the inequality is trivial. Assume now that $r>0$, in which case $\rv(\mu_{v\cdot\Phi})\in \inn \R(\Phi)$.
 Suppose, for contradiction, that $\|v\|>\frac{2}{r}h_{\topo}(f)$.  Let $H_v(\Phi)$ be the unique supporting hyperplane of $\R(\Phi)$ for which $v$ is the outward pointing normal vector.  By the compactness of $\R(\Phi)$, $F_v(\Phi)=\R(\Phi)\cap H_v(\Phi)$ is a (nonempty) face of $\R(\Phi)$.  
 
 Let $\nu\in\cM$ be an invariant measure with $\rv(\nu)\in F_v(\Phi)$.  Since $h_{\topo}(f)\geq h_{\mu_{v\cdot\Phi}}(f)$ and $h_\nu(f)\geq 0$, we have that $h_\nu(f)\geq h_{\mu_{v\cdot\Phi}}(f) -h_{\topo}(f)$.  Using Equation \eqref{eq111}, we have that 
 \begin{align*}
 P_{\topo}(v\cdot\Phi)&\geq h_\nu(f)+\int v\cdot\Phi d\nu\\
 &\geq h_{\mu_{v\cdot\Phi}}(f)-h_{\topo}(f)+v\cdot \rv(\nu)\\
 &=h_{\mu_{v\cdot\Phi}}(f)-h_{\topo}(f)+v\cdot \rv(\mu_{v\cdot\Phi})+v\cdot(\rv(\nu)-\rv(\mu_{v\cdot\Phi})).
 \end{align*}
 Observe that $v\cdot(\rv(\nu)-\rv(\mu_{v\cdot\Phi}))$ is $\|v\|$ times the distance $\dist(\rv(\mu_{v\cdot\Phi}),H_v(\Phi))$.  Since $H_v(\Phi)$ does not intersect the interior of $\R(\Phi)$, $\dist(\rv(\mu_{v\cdot\Phi}),H_v(\Phi))\geq r$.  Therefore,
 $$
 P_{\topo}(v\cdot\Phi)\geq h_{\mu_{v\cdot\Phi}}(f)-h_{\topo}(f)+v\cdot \rv(\mu_{v\cdot\Phi})+r\|v\|.
 $$
 Using the assumption on $\|v\|$, we find that 
 $$
 P_{\topo}(v\cdot\Phi)> h_{\mu_{v\cdot\Phi}}(f)+v\cdot \rv(\mu_{v\cdot\Phi})+h_{\topo}(f).
 $$
 This implies that 
 $$
 P_{\topo}(v\cdot\Phi)-\left(h_{\mu_{v\cdot\Phi}}(f)+ \int v\cdot\Phi\,d\mu\right)> h_{\topo}(f).
 $$
 Hence, $\mu_{v\cdot\Phi}$ is not an equilibrium state of $v\cdot\Phi$.  This contradiction completes the proof.
\end{proof}

Next, we prove an auxiliary result:

\begin{lemma}\label{lem:covering}
Let $m\in \bN$.  There is a Turing machine $\chi$ such that for oracles $\phi$ and $\psi$ for $r>0$ and $\delta>0$, respectively, produces computable points $p_1,\dots,p_k\in \bQ^m$such that $\overline{B}(0,r)\subset \bigcup_{i=1}^k B(p_i,\delta)$.
\end{lemma}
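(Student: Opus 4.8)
\emph{Overall strategy and rational bounds.} The plan is to cover $\overline{B}(0,r)$ by the $\delta$-balls centered at the points of a sufficiently fine rational grid, keeping only the finitely many grid points whose $\delta$-ball can possibly meet $\overline{B}(0,r)$; every quantity used will be either rational or a finite collection of oracle outputs, so the list can be produced and its correctness verified by the machine $\chi$. First I would read off from $\phi$ a rational upper bound $R\ge r$ (e.g.\ $R=\phi(0)+1$), and from $\psi$ a rational upper bound $\delta_1\ge\delta$ (e.g.\ $\delta_1=\psi(0)+1$). I also need a strictly positive rational lower bound $\delta_0$ with $0<\delta_0\le\delta$: since $\delta>0$ and $|\psi(j)-\delta|<2^{-j}$, querying $\psi(1),\psi(2),\dots$ eventually yields a $j$ with $\psi(j)-2^{-j}>0$, and then $\delta_0:=\psi(j)-2^{-j}$ works. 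Finally put $M=m$, so that $M\ge\sqrt m$.

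\emph{Construction of the grid.} Fix a rational $\eta$ with $0<\eta<\delta_0/M$ and set $N=\lfloor(R+\delta_1)/\eta\rfloor$. The machine $\chi$ then outputs the points $p=(\eta k_1,\dots,\eta k_m)$ with $k_i\in\{-N,\dots,N\}$; there are $(2N+1)^m$ of them, all lying in $\bQ^m$, and all computable (being rational).

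\emph{Verification of the covering.} Given $x\in\overline{B}(0,r)$, let $p$ be a nearest point of the full grid $(\eta\bZ)^m$, so that each coordinate of $x-p$ has absolute value at most $\eta/2$, whence $\|x-p\|\le\tfrac{\sqrt m}{2}\,\eta\le\tfrac{M}{2}\,\eta<\tfrac{\delta_0}{2}\le\tfrac{\delta}{2}<\delta$; thus $x\in B(p,\delta)$. Moreover each coordinate of $p$ has absolute value at most $\|p\|\le\|x\|+\|x-p\|\le r+\delta\le R+\delta_1$, so $p$ is one of the $(2N+1)^m$ points output above. Hence $\overline{B}(0,r)\subset\bigcup_{i=1}^k B(p_i,\delta)$, as required.

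The main obstacle — routine, but the one genuine computability point — is obtaining the strictly positive rational lower bound $\delta_0$ in Step 1: the oracle for $\delta$ delivers only approximations, so one must argue that such a bound is producible in finite time, which rests on $\delta>0$ together with the convergence rate $|\psi(j)-\delta|<2^{-j}$. Everything else is the elementary packing estimate for a cube of side $\eta$ and arithmetic on rationals.
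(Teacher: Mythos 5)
Your proof is correct and follows essentially the same approach as the paper's: both cover the ball by a rational grid whose mesh size is controlled by a strictly positive rational lower bound on $\delta$, obtained by querying the oracle $\psi$ until $\psi(j)-2^{-j}>0$. The only cosmetic difference is that the paper first reduces by scaling to $r=1$, whereas you work directly with a rational upper bound $R\ge r$; your version is if anything cleaner and also avoids a small misprint in the paper's choice of mesh parameter $d$.
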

\begin{proof}
Observe that, by scaling, it is sufficient to prove this for $r=1$.  

Using the oracle for $\delta$, by approximating $\delta$ sufficiently well, we can bound $\delta$ away from zero, and, moreover, we can find an $n$ so that $2^{-n}<\delta$ and let $d=\left\lceil\frac{2^n}{\sqrt{m}}\right\rceil$.  Then, let $p_1,\dots,p_{(2d+1)^m}$ be the points where every coordinate is of the form $\frac{k}{d}$ where $k\in[-d,d]$ is an integer.  For any point $x$ in the unit square, there exists a $p_i$ such that each coordinate is within $d^{-1}$ of the corresponding coordinate of $p_i$.  Therefore, the distance $\|x-p_i\|<\sqrt{m}d^{-1}\leq 2^{-n}<\delta$.
\end{proof}

The following result is the main result of this section:

\begin{theorem}\label{thmmaingenentcomp}
Let $f:X\to X$ be a continuous map on a computable compact 
metric space $X$ such that $\mu\mapsto h_\mu(f)$ is upper semi-continuous. Let $\Phi:X\to \bR^m$ be
computable.  Suppose a computable function $r:\inn\R(\Phi)\rightarrow\bR^+$ is given such that
for all $w\in\inn\R(\Phi)$, $B(w,r(w))\subset\R(\Phi)$.
Suppose that there exists an approximating sequence $(\Phi_{\epsilon_n})_n$
of $\Phi$ such that for all $n\in\bN$ and all $v\in \bR^m$, the potential $v\cdot \Phi_{\epsilon_n}$ has a unique equilibrium state $\mu_{v\cdot \Phi_{\epsilon_n}}$. Moreover, assume  that the functions $n\mapsto \epsilon_n$, $(v,n)\mapsto h_{\mu_{v\cdot \Phi_{\epsilon_n}}}(f)$ and $(v,n)\mapsto \rv(\mu_{v\cdot \Phi_{\epsilon_n}})$ are computable. Then $\cH_\Phi$ is computable on $\inn \R(\Phi)$.
\end{theorem}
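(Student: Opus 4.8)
The plan is to deduce the theorem from Theorem \ref{thmHcompgen}, in the form of Remark \ref{remcons}. We are given a computable $r$ with $B(w,r(w))\subset\R(\Phi)$ (hence, by convexity, $B(w,r(w))\subset\inn\R(\Phi)$), and $n\mapsto\epsilon_n$ is computable, so it suffices to show that $(n,s,w)\mapsto h^u_{\Phi_{\epsilon_n}}(w,2^{-s})$ and $(n,s,w)\mapsto h^l_{\Phi_{\epsilon_n}}(w,2^{-s})$ are computable on the set where $2^{-s}+\epsilon_n<r(w)$; this is the only regime in which those oracles are queried in the proof of Theorem \ref{thmHcompgen}, once one adjoins the inequality $(\alpha+1)\epsilon_n<r(w)$ to the finitely many conditions that proof imposes when passing to a subsequence. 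The engine of the computation is the uniqueness hypothesis: since $v\cdot\Phi_{\epsilon_n}$ has a unique equilibrium state $\mu_{v\cdot\Phi_{\epsilon_n}}$, the defining identity of an equilibrium state gives
\[
P_{\rm top}(v\cdot\Phi_{\epsilon_n})=h_{\mu_{v\cdot\Phi_{\epsilon_n}}}(f)+v\cdot\rv(\mu_{v\cdot\Phi_{\epsilon_n}}),
\]
with $\rv$ taken relative to $\Phi_{\epsilon_n}$; hence $(v,n)\mapsto P_{\rm top}(v\cdot\Phi_{\epsilon_n})$ is computable, being a composition of the functions the hypotheses supply.

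Next I would record the Fenchel duality between topological pressure and localized entropy. Since $\mu\mapsto h_\mu(f)$ is upper semi-continuous, $\cH_{\Phi_{\epsilon_n}}$ is concave and continuous on the compact convex set $\R(\Phi_{\epsilon_n})$, and the variational principle identifies $v\mapsto P_{\rm top}(v\cdot\Phi_{\epsilon_n})$ as the convex conjugate of $-\cH_{\Phi_{\epsilon_n}}$ (extended by $+\infty$ off $\R(\Phi_{\epsilon_n})$); biconjugation then yields
\[
\cH_{\Phi_{\epsilon_n}}(w)=\inf_{v\in\bR^m}\bigl(P_{\rm top}(v\cdot\Phi_{\epsilon_n})-v\cdot w\bigr)\qquad(w\in\R(\Phi_{\epsilon_n}))
\]
(cf.\ \cite{Je,KW1}). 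Now fix $w_0$ and $\rho=2^{-s}$ with $\rho+\epsilon_n<r(w_0)$. As $\|\Phi-\Phi_{\epsilon_n}\|_\infty<\epsilon_n$ forces $d_H(\R(\Phi),\R(\Phi_{\epsilon_n}))<\epsilon_n$, and both rotation sets are convex, $\overline{B}(w_0,\rho)\subset\R(\Phi_{\epsilon_n})$; in particular $w_0\in\inn\R(\Phi_{\epsilon_n})$. Applying Sion's minimax theorem to $(v,w)\mapsto P_{\rm top}(v\cdot\Phi_{\epsilon_n})-v\cdot w$ (convex in $v$, affine in $w$, on $\bR^m\times\overline{B}(w_0,\rho)$ with the second factor compact and convex), together with the duality above and $\sup_{w\in\overline{B}(w_0,\rho)}(-v\cdot w)=\rho\|v\|-v\cdot w_0$, gives
\[
h^u_{\Phi_{\epsilon_n}}(w_0,\rho)=\inf_{v\in\bR^m}\bigl(P_{\rm top}(v\cdot\Phi_{\epsilon_n})-v\cdot w_0+\rho\|v\|\bigr),
\]
while simply interchanging the two infima and using $\inf_{w\in\overline{B}(w_0,\rho)}(-v\cdot w)=-\rho\|v\|-v\cdot w_0$ gives
\[
h^l_{\Phi_{\epsilon_n}}(w_0,\rho)=\inf_{v\in\bR^m}\bigl(P_{\rm top}(v\cdot\Phi_{\epsilon_n})-v\cdot w_0-\rho\|v\|\bigr).
\]

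It then remains to turn these infima over $\bR^m$ into finite computations. First, an a priori bound on $\|v\|$: writing $t=\dist(w_0,\partial\R(\Phi_{\epsilon_n}))$, we have $w_0+t\,v/\|v\|\in\R(\Phi_{\epsilon_n})$, so choosing $\mu\in\cM$ with $\rv(\mu)=w_0+t\,v/\|v\|$ and invoking the variational principle yields $P_{\rm top}(v\cdot\Phi_{\epsilon_n})-v\cdot w_0\ge t\|v\|$ — essentially Proposition \ref{prop:vbound}. Hence the integrand in the $h^u$-formula is bounded below by $t\|v\|$, the one in the $h^l$-formula by $(t-\rho)\|v\|$ with $t-\rho>0$, and both equal $h_{\rm top}(f)=h_{\mu_{0\cdot\Phi_{\epsilon_n}}}(f)$ at $v=0$; so each infimum is attained on a ball $\{\|v\|\le R\}$ of computable radius $R$, since $h_{\rm top}(f)$ is computable (the $v=0$ instance of the given functions) and $t-\rho\ge r(w_0)-\epsilon_n-2^{-s}$ is, on our domain, a computable positive lower bound. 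On $\{\|v\|\le R\}$ the integrand $v\mapsto P_{\rm top}(v\cdot\Phi_{\epsilon_n})-v\cdot w_0\pm\rho\|v\|$ is a computable function, hence has a computable global modulus of continuity (Lemma \ref{globalmodulusofcontinuity}); covering $\{\|v\|\le R\}$ by finitely many small balls (Lemma \ref{lem:covering}) and taking the minimum of the integrand over their rational centers approximates the infimum to any prescribed accuracy. Thus $h^u_{\Phi_{\epsilon_n}}(w,2^{-s})$ and $h^l_{\Phi_{\epsilon_n}}(w,2^{-s})$ are computable on the required domain, and Remark \ref{remcons} delivers computability of $\cH_\Phi$ on $\inn\R(\Phi)$.

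The delicate point, I expect, is the upgrade of the pointwise duality for $\cH_{\Phi_{\epsilon_n}}$ to the closed-form expressions for $h^u$ and $h^l$: this needs the ball $\overline{B}(w_0,\rho)$ to sit genuinely inside $\R(\Phi_{\epsilon_n})$, for otherwise $\inf_v\bigl(P_{\rm top}(v\cdot\Phi_{\epsilon_n})-v\cdot w\bigr)$ drops to $-\infty$ at points of the ball lying outside the rotation set and the minimax identity breaks. This is precisely why the Hausdorff-distance estimate and the restriction $\rho+\epsilon_n<r(w_0)$ are built into the argument, and why Proposition \ref{prop:vbound} must be applied with the computable lower bound $r(w_0)-\epsilon_n$ on $\dist(w_0,\partial\R(\Phi_{\epsilon_n}))$. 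The remaining ingredients — convexity and $1$-Lipschitz dependence of the pressure on the potential, and minimizing a function with a computable modulus of continuity over a computable grid — are routine.
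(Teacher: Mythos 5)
Your proposal is correct, and it follows a genuinely different route than the paper's. The paper's proof works with the \emph{forward} map $v\mapsto\rv(\mu_{v\cdot\Phi_{\epsilon_n}})$: after obtaining the a priori bound on $\|v\|$ from Proposition~\ref{prop:vbound}, it covers $\overline{B}(0,R)$ by a computable grid, computes for each grid point $p_i$ the distance from $\rv(\mu_{p_i\cdot\Phi_{\epsilon_n}})$ to $w_0$, classifies which grid points land (approximately) inside the ball $\overline{B}(w_0,2^{-s})$, and takes the min/max of the corresponding entropies $h_{\mu_{p_i\cdot\Phi_{\epsilon_n}}}(f)$ with a two-sided squeezing argument (comparing against balls of radius $2^{-s}\pm 2^{-k+2}$) to control the errors. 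The engine is Jenkinson's result that $\mu_{v\cdot\Phi_{\epsilon_n}}$ is a localized measure of maximal entropy at its own rotation vector, together with uniqueness of equilibrium states. Your proposal instead passes to the Legendre-dual side: you compute the pressure $P_{\rm top}(v\cdot\Phi_{\epsilon_n})$ from the two hypothesized computable functions, express $\cH_{\Phi_{\epsilon_n}}$ by biconjugation as $\inf_v(P_{\rm top}(v\cdot\Phi_{\epsilon_n})-v\cdot w)$, and then convert $h^u$ and $h^l$ into single closed-form infima over $v$ by interchanging the outer $\sup$ or $\inf$ with the inner $\inf$. This is cleaner: it entirely dispenses with the ``which $v$'s hit the ball'' bookkeeping and its attendant error management. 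Both arguments ultimately hinge on the same two computability tools, namely Proposition~\ref{prop:vbound} (to truncate the $v$-domain) and a grid-plus-modulus-of-continuity argument (Lemmas~\ref{globalmodulusofcontinuity} and~\ref{lem:covering}).

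Two technical points worth flagging. First, the invocation of Sion's minimax for the $h^u$ identity is slightly out of order as written: the standard statement requires the domain of the $\inf$-variable to be compact, whereas you take $v\in\bR^m$. Your coercivity estimate (the bound $P_{\rm top}(v\cdot\Phi_{\epsilon_n})-v\cdot w\ge (t-\rho)\|v\|$ for all $w\in\overline{B}(w_0,\rho)$) is exactly what justifies restricting $v$ first to a computable compact ball --- this restriction should precede, not follow, the appeal to minimax. (The $h^l$ identity is unconditional since it is just an exchange of two infima.) Second, the biconjugation step silently uses that $\cH_{\Phi_{\epsilon_n}}$ is concave and u.s.c.\ on $\R(\Phi_{\epsilon_n})$; concavity follows from the affine dependence of $\nu\mapsto h_\nu(f)$, and continuity from upper semi-continuity of the entropy map, both of which are in force here --- but it is worth stating. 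With these adjustments your argument is complete and, in my view, somewhat more transparent than the original.
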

\begin{proof}
By Theorem \ref{thmHcompgen} and Remark \ref{remcons}, it is enough to show that $(n,s,w)\mapsto h^l_{\Phi_{\epsilon_n}}(w, 2^{-s})$ and $(n,s,w)\mapsto h^u_{\Phi_{\epsilon_n}}(w, 2^{-s})$ are computable functions.  Fix $w_0\in \inn\R(\Phi)$, and let $\psi$ be an oracle for $w_0$.

The variational principle for the topological entropy, see Equation \eqref{eq111} with $\Phi\equiv 0$, implies that $h_{\topo}(f)=h_{\mu_{0\cdot\Phi_{\epsilon_n}}}(f)$.  By assumption, the map $(v,n)\mapsto h_{\mu_{v\cdot \Phi_{\epsilon_n}}}(f)$ is computable, so we can approximate $h_{\topo}(f)$ to any precision and compute  an upper bound  $h_{\max}$.  Since $r(w_0)$ can be approximated to any precision and positive, we can approximate $r$ to sufficient accuracy so that $r$ is bounded away from zero.  Let $r_{\min}$ be such a lower bound.  

Recalling the argument in the proof of Theorem \ref{thmHcompgen}, for any computable $\beta>0$ and $n_0\in\mathbb{N}$, we can compute an $n>n_0$ so that $\epsilon_n<\beta$.  By passing to a subsequence, we may assume that the $\epsilon_n$'s are all decreasing and $4\sqrt{m}\epsilon_n<r_{\min}$.  Then, let $w_1,\dots,w_n\in \overline{B}(w_0,\frac{1}{2}r_{\min})\subset\inn\R(\Phi)$ as in Lemma \ref{lemhs}.  For each $w_i$ let $\mu_i\in\cM$ be an invariant measure such that $\rv_{\Phi}(\mu_i)=w_i$.  Observe that since $\|\Phi-\Phi_n\|_\infty<\epsilon_n$, $\|\rv_{\Phi}(\mu_i)-\rv_{\Phi_n}(\mu_i)\|<\epsilon_n<\frac{1}{4\sqrt{m}}r_{\min}$.  Therefore, by Lemma \ref{lemhs}, $B(w_0,\frac{1}{4\sqrt{m}}r_{\min})\subset\conv(\rv_{\Phi_n}(\mu_i))_i\subset\R(\Phi_n)$ for all $n$.

By using $\psi$ to compute an approximation $w$ to $w_0$ whose distance to $w_0$ is at most $\frac{1}{8\sqrt{m}}r_{\min}$, we know that $B(w,\frac{1}{8\sqrt{m}}r_{\min})\subset\inn\R(\Phi_{\epsilon_n})$.  Therefore, by Proposition \ref{prop:vbound}, we can compute an upper bound $R$ for $\|v\|$ that applies to all $\Phi_{\epsilon_n}$ and all $w$ within $\frac{1}{8\sqrt{m}}r_{\min}$ of $w_0$.  Throughout the remainder of this proof, we restrict our attention to the closed ball $B=\overline{B}(0,R)$ in $\mathbb{R}^m$.

Since $B$ is a compact set, for any fixed $n$, the computable functions $v\mapsto h_{\mu_{v\cdot \Phi_{\epsilon_n}}}(f)$ and $v\mapsto \rv_{\Phi_{\epsilon_n}}(\mu_{v\cdot \Phi_{\epsilon_n}})$ have computable global moduli of continuity $\mu_n$ and $\chi_n$, respectively.  Fix an integer $k>0$.  Let $\delta=2^{-\min\{\mu(k+1),\chi(k+1)\}}$.  By Lemma \ref{lem:covering}, we can find $\{p_1,\dots,p_t\}$ so that, for any $q\in B$, there is some $p_i$ so that $\|q-p_i\|<\delta$.  For $q\in B(p_i,\delta)$, by the definition of $\chi$, $\|\rv_{\Phi_{\epsilon_n}}(\mu_{q\cdot\Phi_{\epsilon_n}})-\rv_{\Phi_{\epsilon_n}}(\mu_{p_i\cdot\Phi_{\epsilon_n}})\|<2^{-k-1}$.  Moreover, since $\psi$ an oracle for $w_0$, the map $v\mapsto \rv_{\Phi_{\epsilon_n}}(\mu_{v\cdot \Phi_{\epsilon_n}})$ is computable, and distance function in $\mathbb{R}^m$ is computable, we compute $d_i$, which is an approximation of the distance between $w_0$ and $\rv_{\Phi_{\epsilon_n}}(\mu_{p_i\cdot\Phi_{\epsilon_n}})$, with error at most $2^{-k-1}$.  Therefore, we have the following bounds on the distance: $d_i-2^{-k}<\|\rv_{\Phi_{\epsilon_n}}(\mu_{q\cdot\Phi_{\epsilon_n}})-w_0\|<d_i+2^{-k}$.

We now prove that the maps $(n,s)\mapsto h^l_{\Phi_{\epsilon_n}}(w_0,2^{-s})$ and $(n,s)\mapsto h^u_{\Phi_{\epsilon_n}}(w_0,2^{-s})$ are computable.  This, combined with Theorem \ref{thmHcompgen} proves that $\cH$ is a computable function.  Let $s$ be a real number and $\tau$ an oracle for $s$.  Suppose that we use the oracle $\tau$ to compute an approximation $s_k$ to $s$ of error at most $\log_2(1+2^{-k+s})$, from this inequality, we find that $|2^{-s}-2^{-s_k}|<2^{-k}$.  Let $i_1,\dots,i_\ell$ be the subset of the indices $1,\dots,t$ such that $d_i<2^{-s_k}+2^{-k+1}$; this inequality implies that $d_i-2^{-k}<2^{-s}$.  By construction, for all $i\not\in\{i_1,\dots,i_\ell\}$, $\rv_{\Phi_{\epsilon_n}}(\mu_{q\cdot\Phi_{\epsilon_n}})$ is excluded from $B(w_0,2^{-s})$ for all $q\in B(p_i,\delta)$.  Additionally, for all $j=1,\dots,\ell$, by construction, $\rv_{\Phi_{\epsilon_n}}(\mu_{q\cdot\Phi_{\epsilon_n}})$ is contained within $B(w_0,2^{-s}+2^{-k+2})$ for all $q\in B(p_{i_j},\delta)$.  Moreover, by our construction, if $q\in\mathbb{R}^m$ so that $\rv(\mu_{q\cdot\Phi_{\epsilon_n}})\in B(w_0,2^{-s})$, then $q\in B(p_{i_j},\delta)$ for some $j=1,\dots,\ell$.

Observe that, by the definition of $\delta$, if $q\in B(p_{i_j},\delta)$, then $|h_{\mu_{q\cdot\Phi_{\epsilon_n}}}(f)-h_{\mu_{p_{i_j}\cdot\Phi_{\epsilon_n}}}(f)|<2^{-k-1}$.  Since the map $v\mapsto h_{\mu_{v\cdot \Phi_{\epsilon_n}}}(f)$ is computable, we can compute  $h_{i_j}$, an approximation to $h_{\mu_{p_{i_j}\cdot\Phi_{\epsilon_n}}}(f)$ with error at most $2^{-k-1}$.  Therefore, it follows that $\min_j\{h_{i_j}\}-2^{-k}\leq h_{\mu_{q\cdot\Phi_{\epsilon_n}}}(f)\leq \max_j\{h_{i_j}\}+2^{-k}$.  Combining all of these inequalities, it follows that 
\begin{eqnarray*}
h_{\Phi_{\epsilon_n}}^l(w_0,2^{-s})\geq \min_j\{h_{i_j}\}-2^{-k} \geq h_{\Phi_{\epsilon_n}}^l(w_0,2^{-s}+2^{-k+2})-2^{-k+1}\, {\rm and}\\
h_{\Phi_{\epsilon_n}}^u(w_0,2^{-s})\leq \max_j\{h_{i_j}\}+2^{-k} \leq h_{\Phi_{\epsilon_n}}^u(w_0,2^{-s}+2^{-k+2})+2^{-k+1}.
\end{eqnarray*}
By repeating this argument with $2^{-s}-2^{-k+2}$ substituted for $2^{-s}$ results in a new subset of indices $i'_1,\dots,i'_{\ell'}$ so that the following inequalities hold:
\begin{multline*}
h_{\Phi_{\epsilon_n}}^l(w_0,2^{-s}-2^{-k+2})+2^{-k+1}\geq \min_{j'}\{h_{i'_{j'}}\}+2^{-k}\geq h_{\Phi_{\epsilon_n}}^l(w_0,2^{-s})\\
\geq \min_j\{h_{i_j}\}-2^{-k} \geq h_{\Phi_{\epsilon_n}}^l(w_0,2^{-s}+2^{-k+2})-2^{-k}
\end{multline*}
and
\begin{multline*}
h_{\Phi_{\epsilon_n}}^u(w_0,2^{-s}-2^{-k+2})-2^{-k+1}\leq \max_{j'}\{h_{i'_{j'}}\}-2^{-k}\leq h_{\Phi_{\epsilon_n}}^u(w_0,2^{-s})\\
\leq \max_j\{h_{i_j}\}+2^{-k} \leq h_{\Phi_{\epsilon_n}}^u(w_0,2^{-s}+2^{-k+2})+2^{-k}.
\end{multline*}
The continuity of $\cH$, which implies the continuity of $h_{\Phi_{\epsilon_n}}^u$ and $h_{\Phi_{\epsilon_n}}^l$, implies that as $k$ increases, the outer terms of these inequality converge to the desired inner term.  Therefore, the maximum and minimum terms converge to the desired middle term.  Moreover, since the desired inner term is between the maximum and minimum terms, respectively, when the difference between these terms is sufficiently small, their average is an approximation to $h_{\Phi_{\epsilon_n}}^{u/l}(w_0,2^{-s})$.  This completes the conditions for Theorem \ref{thmHcompgen} and hence $\cH$ is computable.
\end{proof}

\begin{remark} We note that, in Theorem \ref{thmmaingenentcomp}, we can replace the assumption on the uniqueness of the equilibrium states of the potentials $v\cdot \Phi_{\epsilon_n}$ by a slightly more general condition: Namely,
it is sufficient to require that for all $v\in \bR^m$ and all $n\in \bN$, the equilibrium states of the potentials $v\cdot \Phi_{\epsilon_n}$ have the same rotation vector.  When the rotation vectors agree, the equilibrium states also have the same entropy. The more general condition holds if and only if $v\mapsto P_{\rm top}( v\cdot \Phi_{\epsilon_n})$ is differentiable on $\bR^m$ for all $n\in \bN$ (see \cite{Je}).
\end{remark}

\begin{definition}
Let $f:X\to X$ be a computable map on a compact computable metric space.   We say that the {\em periodic points of $f$ are uniformly computable} if there exists a Turing machine $\phi=\phi(n,k)$ such that given inputs $n,k\in \bN$,  $\phi(n,k)$ is a finite set of natural numbers $\{i_1,\dots,i_\ell\}$ such such that the Hausdorff distance $d_H(\Per_n(f),\{s_{i_1},\dots,s_{i_\ell}\})<2^{-k}$.
\end{definition}

\begin{remark} For example, if $f:\overline{\bC}\to \overline{\bC}$ is a polynomial with rational coefficients on the Riemann sphere $\overline{\bC}$, then the periodic 
points of $f$ are uniformly computable since the complex roots of a polynomial can be approximated, see, e.g., \cite{Pinkert,Wilf,Brunetto,SagraloffYap}.
\end{remark}

Recall that $\Per_n(f)$ denotes the set of periodic points of $f$ with prime period $n$, see Section 1.2.
The following result provides a method for the  computation of the radius $r$  in Theorem \ref{thmmaingenentcomp}.
\begin{theorem}\label{thmrcomp}
Let $f:X\to X$ be a computable map on a compact computable metric space such that $\cM_{\rm Per}$ is dense in $\cM_f$. Suppose that the periodic points of $f$ are uniformly computable.  Let $\Phi:X\to \bR^m$ be computable.  Then, there exists a Turing machine that, for a  given  oracle $\psi$ for $w_0\in \inn \R(\Phi)$, produces a positive rational number $r(w_0)$ such that $B(w_0,r(w_0))\subset\inn\R(\Phi)$.
\end{theorem}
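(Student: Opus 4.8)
The plan is to produce, from the oracle $\psi$ for $w_0$, an explicit finite certificate that $w_0$ lies well inside $\R(\Phi)$: a finite list of periodic points $q_1,\dots,q_M$ together with a positive rational $\rho_0$ such that $\overline{B}(w_0,\rho_0)\subseteq\conv(\rv_\Phi(\mu_{q_1}),\dots,\rv_\Phi(\mu_{q_M}))\subseteq\R(\Phi)$; one then outputs $r(w_0):=\rho_0$, noting that the open ball $B(w_0,\rho_0)$ is then an open subset of $\R(\Phi)$ and hence is contained in $\inn\R(\Phi)$. The hypothesis that $\cM_{\rm Per}$ is dense in $\cM_f$ guarantees that such a certificate exists: since $\rv_\Phi:\cM_f\to\bR^m$ is continuous, the set $\{\rv_\Phi(\mu_q):q\in\Per(f)\}$ is dense in $\R(\Phi)=\rv_\Phi(\cM_f)$, and $w_0\in\inn\R(\Phi)$ leaves room for finitely many such vectors to surround it; the stability of this configuration under approximation will be supplied by Lemma \ref{lemhs}.

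First I would assemble the computable ingredients. Since $X$ is compact and computable and $f$, $\Phi$ are computable, Lemma \ref{globalmodulusofcontinuity} yields computable global moduli of continuity for $f$ and for $\Phi$; composing the modulus of $f$ with itself $h$ times and then with that of $\Phi$ gives, for each $n,k\in\bN$, a computable precision $K(n,k)$ such that $d_X(x,y)<2^{-K(n,k)}$ forces $\bigl\|\tfrac1n\sum_{h=0}^{n-1}\Phi(f^h(x))-\tfrac1n\sum_{h=0}^{n-1}\Phi(f^h(y))\bigr\|<2^{-k}$. Using the uniform computability of the periodic points at precision level $K(n,k)$, I obtain finitely many indices $i_1,\dots,i_\ell$ with $d_H(\Per_n(f),\{s_{i_1},\dots,s_{i_\ell}\})<2^{-K(n,k)}$; each $s_{i_j}$ is within $2^{-K(n,k)}$ of a genuine point $q\in\Per_n(f)$, so computing the pseudo-orbit average $\tfrac1n\sum_{h=0}^{n-1}\Phi(f^h(s_{i_j}))$ to precision $2^{-k}$ produces a rational vector $v\in\bQ^m$ with $\|v-\rv_\Phi(\mu_q)\|<2^{-k+1}$. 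Dovetailing over $n$ and $k$ therefore enumerates pairs $(v,\delta)\in\bQ^m\times\bQ^{+}$ with $\delta=2^{-k+1}$ such that each $v$ lies within $\delta$ of a genuine periodic rotation vector (which is in $\R(\Phi)$), and such that the set of these $v$'s is dense in $\{\rv_\Phi(\mu_q):q\in\Per(f)\}$, hence dense in $\R(\Phi)$.

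The algorithm then dovetails over finite sub-collections $(v_1,\delta_1),\dots,(v_M,\delta_M)$ of the enumerated pairs, over positive rationals $\rho_0$, and over precisions $p$; at each step it uses $\psi$ to get $w_0^{(p)}\in\bQ^m$ with $\|w_0^{(p)}-w_0\|<2^{-p}$ and tests, by exact rational linear arithmetic, whether $\overline{B}\bigl(w_0^{(p)},\rho_0+\delta+2^{-p}\bigr)\subseteq\conv(v_1,\dots,v_M)$, where $\delta=\max_j\delta_j$; on the first success it halts and outputs $\rho_0$. To see that any output is valid, write $\rv_\Phi(\mu_{q_j})$ for the genuine vector with $\|v_j-\rv_\Phi(\mu_{q_j})\|\le\delta_j\le\delta$. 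If some $x\in\overline{B}(w_0,\rho_0)$ failed to lie in $\conv(\rv_\Phi(\mu_{q_j}))$, a separating unit vector $u$ would give $\langle u,\rv_\Phi(\mu_{q_j})\rangle<\langle u,x\rangle$ for all $j$, hence $\langle u,v_j\rangle\le\langle u,\rv_\Phi(\mu_{q_j})\rangle+\delta<\langle u,x+\delta u\rangle$ for all $j$, so $x+\delta u\notin\conv(v_1,\dots,v_M)$; but $\|x+\delta u-w_0^{(p)}\|\le\rho_0+2^{-p}+\delta$, contradicting the inclusion that was tested. Thus $\overline{B}(w_0,\rho_0)\subseteq\conv(\rv_\Phi(\mu_{q_j}))\subseteq\R(\Phi)$ and $B(w_0,\rho_0)\subseteq\inn\R(\Phi)$.

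Finally I would check termination. As $w_0\in\inn\R(\Phi)$, fix $\rho^{*}>0$ with $\overline{B}(w_0,4\sqrt{m}\,\rho^{*})\subseteq\R(\Phi)$ and apply Lemma \ref{lemhs} with $\epsilon=2\rho^{*}$ to obtain points $w_1,\dots,w_N\in\overline{B}(w_0,4\sqrt{m}\,\rho^{*})\subseteq\R(\Phi)$ ($N=2m$) such that every tuple $\widetilde{w}_i$ with $\|\widetilde{w}_i-w_i\|<2\rho^{*}$ satisfies $B(w_0,2\rho^{*})\subseteq\conv(\widetilde{w}_1,\dots,\widetilde{w}_N)$. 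By density choose $q_i\in\Per(f)$ with $\|\rv_\Phi(\mu_{q_i})-w_i\|<\rho^{*}$; by the second step the enumeration eventually delivers pairs $(v_i,\delta_i)$ with $\|v_i-\rv_\Phi(\mu_{q_i})\|\le\delta_i<\rho^{*}/4$, whence $\|v_i-w_i\|<2\rho^{*}$ and $\overline{B}(w_0,\rho^{*})\subseteq B(w_0,2\rho^{*})\subseteq\conv(v_1,\dots,v_N)$. For any $p$ with $2^{-p}<\rho^{*}/8$ and any positive rational $\rho_0\le\rho^{*}/2$ one then has $\rho_0+\delta+2\cdot 2^{-p}\le\rho^{*}$, so $\overline{B}(w_0^{(p)},\rho_0+\delta+2^{-p})\subseteq\overline{B}(w_0,\rho^{*})\subseteq\conv(v_1,\dots,v_N)$ and the test succeeds, so the dovetailed search halts. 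I expect the main obstacle to be the bookkeeping of the second step: the uniform computability of periodic points only approximates the sets $\Per_n(f)$, not individual orbits in a persistent way, so one must reason with pseudo-orbit rotation vectors and carefully propagate the moduli of continuity of $f$ and $\Phi$ to obtain the honest error bounds $\delta$ on which the convex-geometry certificate relies.
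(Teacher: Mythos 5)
Your proof is correct, and it rests on the same core idea as the paper's: use the uniform computability of periodic points and the moduli of continuity of $f$ and $\Phi$ to approximate rotation vectors of periodic orbits, and then exploit the density of periodic point measures to certify a ball around $w_0$ inside a finite convex hull of such vectors, which is itself contained in $\R(\Phi)$. The implementations differ enough to be worth noting. The paper iterates over $n$, forms a $2^{-n}$-Hausdorff approximation $A_n$ of $C_n=\conv\{\rv(\mu_x):x\in\Per_1(f)\cup\cdots\cup\Per_n(f)\}$, approximates the distance $a_n$ from $w_0$ to $\partial A_n$, and outputs $a_n-2^{-n+1}$ once $a_n>2^{-n+1}$; correctness and termination are handled by the tubular-neighborhood relation between the boundaries of the nested convex bodies $A_n,C_n,\R(\Phi)$, much as in Proposition~\ref{prop:radiuscomputable}. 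You instead dovetail over finite lists of approximants $(v_j,\delta_j)$ together with candidate radii $\rho_0$ and oracle precisions $p$, test the inclusion $\overline{B}(w_0^{(p)},\rho_0+\delta+2^{-p})\subseteq\conv(v_1,\dots,v_M)$ by exact rational arithmetic, and output $\rho_0$ on first success. For correctness you make the convexity reasoning explicit via a separating hyperplane, transferring the tested inclusion from the $v_j$'s to the true $\rv_\Phi(\mu_{q_j})$'s; for termination you invoke Lemma~\ref{lemhs}, which the paper uses elsewhere (Theorem~\ref{thmrightleft}, Theorem~\ref{thmmaingenentcomp}) but not in this proof, to exhibit a finite witness configuration that must eventually appear in the enumeration. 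Your version is somewhat more self-contained and buys a cleaner termination certificate; the paper's version gets the output radius slightly more directly by reading it off the distance to $\partial A_n$. Both are sound, and your careful treatment of the fact that uniform computability only approximates $\Per_n(f)$ as a set (via pseudo-orbit averages and explicit $\delta$ error bounds) is exactly the bookkeeping the paper handles with the chained moduli $b=\min_{0\le i<n}\mu^i(\chi(k))$.
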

\begin{proof}
Since $X$ is compact and both $f$ and $\Phi$ are computable, there are computable global moduli of continuity, $\mu$ and $\chi$, for $f$ and $\Phi$, respectively.  Let $n$ and $k$ be positive integers.  Define $b=\min_{0\leq i<n}\mu^i(\chi(k))$.  Then, observe that if $x\in\Per_n(f)$ and $y\in X$ so that $d_X(x,y)<2^{-b}$, then each of the first $n$ iterates of $x$ and $y$ are within $2^{-\chi(k)}$, so that for $i=0,\dots,n-1$, $\|\Phi(f^i(x))-\Phi(f^i(y))\|<2^{-k}$.

Recall that $\mu_x$ denotes the invariant measure supported on the orbit of the periodic point $x$.  In this case, $\rv_{\Phi}(\mu_x)=\frac{1}{n}\sum_{i=1}^{n-1} \Phi(f^i(x))$.  Since the periodic points of $f$ are uniformly computable, by computing $\phi(n,b)$, we can find an approximation $s_{i_j}$ to $x$ so that $\left\|\rv_\Phi(\mu_x)-\frac{1}{n}\sum_{i=1}^{n-1} \Phi(f^i(s_{i_j}))\right\|<2^{-k}$.  Moreover, since $s_{i_j}$ is computable and $\Phi$ and $f$ are both computable, we can approximate $\Phi(f^i(s_{i_j}))$ by $a_i$ so that $\left\|a_i-\Phi(f^i(s_{i_j}))\right\|<2^{-k}$ for all $i=0,\dots,n-1$.  Therefore, $\left\|\rv_{\Phi}(\mu_x)-\frac{1}{n}\sum a_i\right\|<2^{-k+1}$, so $\rv_{\Phi}(\mu_x)$ can be approximated to any accuracy.

By the density of the periodic point measures in $\cM_f$, we know that there exists some $n>0$ such that the convex hull $C_n=\conv\{\rv(\mu_x): x\in \Per_1(f)\cup\dots\cup \Per_n(f)\}$ contains $w_0$ in its interior.  Using the approximations to $\rv_{\Phi}(\mu_x)$ and any standard convex hull algorithm, we can compute an approximation of $C_n$ up to Hausdorff distance $2^{-n}$, let this approximation be $A_n$.  By successively computing $A_n$ and $\psi(n)$ for increasing $n$, we can find an $n$ so that $w_0$ is guaranteed to be within $\inn C_n$ as follows: Since the boundary of $A_n$ is defined by hyperplanes passing through points with rational coordinates, we can compute $a_n$, which approximates the distance between $w_0$ and $\partial A_n$ to an accuracy of $2^{-n}$.  When $n$ is large enough so that $a_n>2^{-n+1}$, then $w_0$ is guaranteed to be within $C_n$ since $r(w_0)=a_n-2^{-n+1}$ is a lower bound on the distance between $w_0$ and $\partial C_n$.  Then, since $C_n\subset\R(\Phi)$, this $r(w_0)$ has the required property.
\end{proof}

\section{Computability of Rotation Sets for Shift Maps}

\subsection{Shift maps }\label{sec:shiftmaps}
We collect some basic facts on shift maps. Let $d\in \bN$, and let $\cA=\{0,\dots,d-1\}$ be a finite alphabet of $d$ symbols. The (one-sided) {\em shift space} $\Sigma_d$ on the alphabet $\cA$ is the set of
all sequences $x=(x_k)_{k=1}^\infty$ where $x_k\in \cA$ for all $k\in \bN$.  We endow $\Sigma_d$ with the {\em Tychonov product }topology
which makes $\Sigma_d$ a compact metrizable space. For example, given $0<\theta<1$, the metric given by
\begin{equation}\label{defmet}
d(x,y)=d_\theta(x,y)\eqdef\theta^{\min\{k\in \bN:\  x_k\not=y_k\}}\qquad\text{and}\qquad d(x,x)=0
\end{equation}
induces the Tychonov product topology on $\Sigma_d$.
The {\em shift map} $f:\Sigma_d\to \Sigma_d$, defined by $f(x)_k=x_{k+1}$, is a continuous $d$-to-$1$ map on $\Sigma_d$.

If $X\subset \Sigma_d$ is an $f$-invariant set, we  say  that $f|_X$ is a {\em sub-shift} with shift space $X$.  In the following, we use the symbol $X$ for any shift space including the full shift $X=\Sigma_d$.               
A particular class of subshifts are subshifts of finite type.
Namely, suppose $A$ is a $d\times d$ matrix with values in $\{0,1\}$, then consider the set of sequences given by  
$X=X_A=\{x\in \Sigma_d: A_{x_k,x_{k+1}}=1\}$.  $X_A$ is a closed (and, therefore, compact) $f$-invariant set, and we say that $f|_{X_A}$ a {\em subshift of finite type}. By reducing the alphabet, if necessary, we always assume that $\cA$ does not contain letters that do not occur in any of the sequences in $X_A$. 

A continuous map $f:Y\to Y$ on a compact metric space $Y$ is called {\em (topologically) transitive} if, for any pair of non-empty open sets $U,V\subset Y$, there exists $n\in \bN$ such that $f^n(U)\cap V\not=\emptyset$. Note that if $f$ is onto, then transitivity is equivalent to having a dense orbit. 
Moreover, we say $f: Y \rightarrow Y$ is {\em topologically mixing}, if for any pair of non-empty open sets $U,V\subset Y$, there exists $N\in \bN$ such that $f^n(U)\cap V\not=\emptyset$ for all $n\geq N.$
A subshift of finite type $f|_{X_A}$ is transitive if and only if $A$ is {\em irreducible}, that is, for each $i,j$, there exists an $n\in \bN$ such that $A^n_{i,j}>0$.  Moreover,  $f|_{X_A}$ is topologically mixing if and only if $A$ is {\em aperiodic}, that is, there exists $n\in \bN$ such that $A^n_{i,j}>0$ for all $i,j$.

In the context of symbolic dynamics, we always consider the case one-sided shift maps.
However, all our result carry over to the case of two-sided shift maps. For details how to make the
connection between one-sided shift maps and two-sided shift maps we refer to
\cite{Je}.

Let $f:X\to X$ be a one-sided subshift.
Given $x\in X$, we write $\pi_k(x)=(x_1,\dots,x_k)\in \cA^k$. Let $\tau=(\tau_1,\dots,\tau_k)\in \cA^n$.  We denote the {\em cylinder of length $k$ generated} by $\tau$ by $\cC(\tau)=\{x\in X: x_1=\tau_1,\dots, x_k=\tau_k\}$. Note that $\cC(\tau)$ may be empty. If  $\Or(\tau)\eqdef(\tau_1,...,\tau_k,\tau_1,...,\tau_k,...)\in X$
we call $\Or(\tau)$ the {\em periodic point in $X$ generated by $\tau$ of period $k$}.
We say $x\in X$ is a preperiodic point if $f^n(x)$  is periodic for some $n\in \bN$.
If $f$ is a subshift of finite type and  $\cC(\tau)$ is not empty, then the preperiodic points in $\cC(\tau)$ form a  dense (and, in particular, nonempty) subset of $\cC(\tau)$.  

Given $x\in X$ and $k\in \bN$, we call $\cC_k(x)=\cC(\pi_k(x))$ the {\em cylinder of length $k$ generated by $x$}. 
We denote the {\em cardinality of the set of cylinders of length $k$ in $X$} by $m_c(k)$. Note that $m_c(k)\leq d^k$, with equality for the full shift.
 
Recall the definitions in Section 1.2 for  $\Per_n(f)$ and $\Per(f)$, i.e.,  the set of periodic points with prime period $n$  and the set of periodic points of $f$ respectively. Let $x\in \Per_n(f)$.  We call $\tau_x=(x_1,\dots,x_{n})$ the {\em generating segment} of  $x$, i.e., $x=\Or(\tau_x)$.  
Next, we define certain periodic points that play a crucial role when dealing with locally constant potentials.

\begin{definition}\label{defelementary} 
Let $k\in \bN$ and $n\geq 1$. We say $x\in\Per_n(f)$ is a {\em $k$-elementary periodic point with period $n$} if the cylinders $\cC_k(x),\dots,\cC_k(f^{n-1}(x))$ are pairwise disjoint.  Fixed points 
are the $k$-elementary points with period $1$. In the case $k=1$, we simply say $x$ is an {\em elementary periodic point}. We denote by $\EPer^k(f)$ the set of all $k$-elementary periodic points.
\end{definition}
  We observe that the period $n$ of a $k$-elementary periodic point is at most 
$n \leq m_c(k)\leq d^k$. In particular,  $\EPer^k(f)$ is finite. 

Let  $\Phi\in C(X,\bR^m)$.  Given $k\in\bN$, we define the {\em k-variation} of $\Phi$ as the maximum difference of $\Phi$ applied to elements of the same cylinder of length $k$, i.e., $\var_k(\Phi)=\sup\{\|\Phi(x)-\Phi(y)\|: x_1=y_1,\dots, x_k=y_k\}$.
We say $\Phi$ is constant on cylinders of length $k$ if $\var_k(\Phi)=0$. It is easy to see that $\Phi$ is locally constant if and only if $\Phi$ is constant on cylinders of length $k$ for some $k\in\bN$.   

\subsection{Computability of Shift Spaces and Potentials}

Throughout this section, we assume that $X$ is a shift space on the alphabet $\mathcal{A}=\{0,\dots,d-1\}$.  Since, in general, there are uncountably many elements of $X$, we develop a computability theory for shift spaces.  We begin by explicitly applying the definition of computability from Definition \ref{def:computable} in this case:

\begin{definition}
An {\em oracle} for a sequence $x=\{x_n\}_{n \in \mathbb{N}}\in X$ is a function $\phi$ such that on input $n$, $\phi(n)=x_n$.  Moreover, $x$ is called {\em computable} if there is a Turing machine $\phi$ which is an oracle for $x$.
\end{definition}

Since, in general, $X$ is uncountable and there are only countably many Turing machines, most sequences are not computable; preperiodic sequences, however, are computable.  Additionally, the definition of a computable function is identical to Definition \ref{def:computable}.  Observe that the distance function $d_\theta$ generating the Tychonov product topology, see Section \ref{sec:shiftmaps}, is computable if and only if $\theta\in (0,1)$ is computable.  Therefore, throughout the remainder of this paper, we assume that $\theta$ is a computable real number in $(0,1)$.  

We observe that when $X$ is a shift space for a subshift of finite type, a function $\Phi$ which is locally constant is computable if and only if its range is a  set of computable numbers.  In particular, recall that every nonempty cylinder $C(\tau)$ contains a computable point and the value of $\Phi$ on a computable point is computable.

Additionally, the definition of computable sets carries over directly to the case of shifts spaces.  We note that if $ X$ is a shift space for a subshift of  finite type, then $X$ is computable because, for any fixed $k$, we can enumerate all cylinders of length $k$ which occur in sequences in $X$.  Each of these cylinders is of the form $\mathcal{C}(\tau)$ for some $\tau$, and $\mathcal{C}(\tau)$ is an open ball containing a preperiodic point.  In addition, we can apply Lemma \ref{globalmodulusofcontinuity} to computable sets to prove the existence of a computable global modulus of continuity for computable functions.

Recall the definition of the total parameter space $T\subset C(X,\mathbb{R}^m)\times\mathbb{R}^m$ given in Definition \ref{def:parameter}.  We make the definition of an oracle for a point in this space explicit as follows:

\begin{definition}
Let $(\Phi, w)$ be a point in $T$, i.e., $w\in\R(\Phi)$.  An {\em oracle} for $(\Phi, w)$ is a function $\psi$ such that, for any natural number $n$, $\psi(n)$ returns a rational number which is within $2^{-n}$ of $w$ and a function $\phi:X\rightarrow\mathbb{R}^m$ such that for all $x\in X$, $\phi(x)$ and $\Phi(x)$ differ by at most $2^{-n}$.  Moreover, $(\Phi, w)$ is called {\em computable} if both $\Phi$ and $w$ are computable.  In this case, $\psi$ must be a Turing machine and the function $\phi$ is an oracle Turing machine for $\Phi$.
\end{definition}

For a distance on $T$, we use the sum of the supremum norm on $C(X,\mathbb{R}^m)$ and the Euclidean distance on $\mathbb{R}^m$. We also explicitly define computable functions on subsets of $T$.

\begin{definition}
Let $U\subset T$ and $F:U\rightarrow\mathbb{R}$.  Then, $F$ is {\em computable} if there is a Turing machine $\chi$ such that for any $(\Phi,w)\in U$ and any oracle $\psi$ for $\Phi$, oracle $\eta$ for $w$, and natural number $n$, $\chi(\psi,\eta,n)$ is a rational number of distance at most $2^{-n}$ to $F(\Phi, w)$.
\end{definition}

\subsection{Locally constant potentials for subshifts of finite type}
Let $f:X\to X$ be a subshift of finite type on the alphabet $\cA=\{0,\dots,d-1\}$ with transition matrix $A$. For $k, m\in \bN$, we denote the set of potentials $\Phi:X\to\bR^m$ that are constant on cylinders of length $k$ by $LC_k(X,\bR^m)$. 
Based on work of  Ziemian \cite{Z} and Jenkinson \cite{Je}, we provide the necessary tools for the study of the  computability of rotation sets and their entropies. We start with the following key result:

\begin{proposition}\label{propone}
 Let $k,m\in \bN$, $\Phi\in LC_k(X,\bR^m)$, and  $d'=m_c(k)$. Then there exist a subshift $g:Y\to Y$ of finite type with alphabet $\cA'=\{0,\dots,d'-1\}$ and 
transition matrix $A'$, and a homeomorphism $h:X\to Y$ that conjugates\footnote{We exclusively use $h_{\mu}$ and $h_{\topo}$ for entropies and $h$ for the conjugate map.  Both notations are fairly common in the respective literature.} $f$ and $g$ (i.e., $h\circ f=g\circ h$) such that the transition matrix $A'$ has at most $d$ non-zero entries in each row, and the potential $\Phi'=\Phi\circ h^{-1}$  is constant on cylinders of length one.
\end{proposition}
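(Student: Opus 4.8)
The plan is to carry out the classical \emph{higher-block recoding} (the $k$-block presentation) of the subshift of finite type $f|_{X_A}$. First I would let $\mathcal{A}'$ be the set of all admissible words of length $k$ occurring in $X$, i.e.\ all $\tau\in\mathcal{A}^k$ with $\mathcal{C}(\tau)\neq\emptyset$; by definition there are exactly $d'=m_c(k)$ of them, so I fix a bijection between this set and $\{0,\dots,d'-1\}$. I then define $h:X\to(\mathcal{A}')^{\bN}$ by letting $h(x)_j$ be the symbol corresponding to the word $(x_j,x_{j+1},\dots,x_{j+k-1})$, and I set $Y=h(X)$ with $g$ the shift restricted to $Y$. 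The computation $h(f(x))_j=(x_{j+1},\dots,x_{j+k})=g(h(x))_j$ gives $h\circ f=g\circ h$ immediately, so the substance of the proof is to check that $h$ is a homeomorphism, that $Y$ is an SFT of the asserted shape, and that $\Phi'$ becomes constant on $1$-cylinders.

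For the homeomorphism property I would observe that $h$ is a sliding block code, hence continuous, and that it is injective because $h(x)_j$ determines $x_j$ as the first letter of the associated word; $X$ being compact and $(\mathcal{A}')^{\bN}$ Hausdorff, $h$ is a homeomorphism onto $Y$, with $h^{-1}(y)_j$ equal to the first letter of the word coded by $y_j$. For the SFT structure, I claim $Y=X_{A'}$, where $A'$ is the $d'\times d'$ zero-one matrix with $A'_{\tau,\tau'}=1$ precisely when there is an admissible word of length $k+1$ whose first $k$ letters spell $\tau$ and whose last $k$ letters spell $\tau'$ (for $k\ge 2$ this is just the overlap condition that the last $k-1$ letters of $\tau$ agree with the first $k-1$ letters of $\tau'$ together with $A_{\tau_k,\tau'_k}=1$). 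The inclusion $h(X)\subseteq X_{A'}$ is immediate. For the reverse inclusion, given $y\in X_{A'}$ I reconstruct $x$ by taking $x_j$ to be the first letter of the word coded by $y_j$; the overlap conditions iterate to give $(x_j,\dots,x_{j+k-1})=y_j$ for every $j$, admissibility of each $y_j$ yields $A_{x_j,x_{j+1}}=1$, and the standing assumption that $\mathcal{A}$ contains no letters absent from $X$ gives the right-extendability needed to conclude $x\in X$ and $h(x)=y$. Finally, a row of $A'$ indexed by $\tau=(a_1,\dots,a_k)$ can only be nonzero in columns of the form $(a_2,\dots,a_k,b)$ with $b\in\mathcal{A}$, and distinct $b$ give distinct columns, so each row has at most $d$ nonzero entries.

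It then remains to verify that $\Phi'=\Phi\circ h^{-1}$ is constant on cylinders of length one in $Y$. A length-one cylinder of $Y$ has the form $\{y\in Y:y_1=\tau\}$ for some $\tau\in\mathcal{A}'$, and under $h^{-1}$ it is sent into the length-$k$ cylinder $\mathcal{C}(\tau)\subset X$, on which $\Phi$ is constant since $\Phi\in LC_k(X,\bR^m)$; hence $\Phi'$ is constant on that $1$-cylinder, which is the assertion.

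I expect the only genuinely delicate step to be the identification $Y=X_{A'}$, that is, checking that the higher-block code creates no global constraint beyond the local overlap/admissibility conditions encoded by $A'$. This is exactly the place where one must use that $X$ is presented by a one-step transition matrix $A$ (so admissibility of a word is detected pair by pair) together with the convention that every letter of $\mathcal{A}$ actually occurs in $X$ (so admissible words extend to the right); everything else is bookkeeping with indices.
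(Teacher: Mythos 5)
Your proposal is correct and follows essentially the same route as the paper: the higher-block ($k$-block) recoding $h(x)_j \leftrightarrow \cC_k(f^{j-1}(x))$, the same transition matrix $A'$, the same compactness argument for $h$ being a homeomorphism, the same counting of nonzero row entries, and the same observation about $\Phi'$. You spell out the surjectivity of $h$ onto $X_{A'}$ more explicitly than the paper does (the paper simply asserts that $h$ is a bijection "from the definition"), which is a welcome bit of extra care but not a different method.
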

\begin{proof}
First, we define  the map $h$. Let $\{\cC_k(0),\dots,\cC_k({m_c(k)-1})\}$ denote the set of cylinders of length $k$ in $X$, which we identify with $\cA'=\{0,\dots, d'-1\}$. The transition matrix $A'$  is defined by  $a'_{i,j}=1$ if and only if there exists $x\in X$ with $\cC_k(x)=i$ and $\cC_k(f(x))=j$. 

Let $Y=Y_{\cA'}$ be the shift space in ${\cA'}^\bN$ given by the transition matrix $A'$. Furthermore, let $g:Y\to Y$ be the corresponding map for the subshift of finite type. For $x\in X$ we define $h(x)=y=(y_n)_{n=1}^\infty$ by $y_n=\cC_k(f^{n-1}(x))$. It follows from the definition that  $h:X\to Y$ is a bijection. Next, we show that $h$ is a homeomorphism. Let $(\xi^n)_{n=1}^\infty$ be a  sequence in $X$ with $\xi=\lim \xi^n$. This means that for any $\ell\in\bN$ there exists $N\in \bN$ such that for all $n\geq N$, $\xi^n_i=\xi_i$ for all $i=1,\dots,\ell$. We conclude that $h(\xi^n)_i=h(\xi)_i$ for $i=1,\dots,\ell+1-k$ for all $n\geq N$, which establishes the continuity of $h$. Finally, since $h$ is a continuous bijection with compact domain, $h$ is a homeomorphism.   Let $x=(x_n)_{n=1}^\infty\in X$ and $y=(y_n)_{n=1}^\infty=h(f(x))$. By definition, $y_n=\cC_k(f^{n-1}(f(x)))=\cC_k(f^n(x))$. On the other hand, $g(h(x))_n=h(x)_{n+1}=\cC_k(f^{(n+1)-1}(x))=\cC_k(f^n(x))=y_n$.
 This shows that $h\circ f=g\circ h$.
 
The assertion  that $A'$ has at most $d$ non-zero entries in each row follows from the fact that for each cylinder $\cC_k(i)\subset X$ the set $f(\cC_k(i))$ can be written as the disjoint union of at most $d$ cylinders of length $k$.  More precisely, for any $x\in f(\cC_k(i))$, the first $k-1$ letters of $x$ are determined by $\cC_k(i)$ and there are at most $d$ possible letters for the $k^{\text{th}}$ position.  Let $y,\widetilde{y}\in Y$ with $y_1=\widetilde{y}_1$. Then $h^{-1}(y),h^{-1}(\widetilde{y})\in\cC_k(i)$ for some $i\in\{1,\dots,m_c(k)-1\}$. Since $\Phi$ is constant on cylinders of length $k$ we may conclude that $\Phi(h^{-1}(y))=\Phi(h^{-1}(\widetilde{y}))$. This shows that $\Phi'$ is constant on cylinders of length one. 
\end{proof}
Ziemian \cite{Z} proved that the rotation set of a potential $\Phi$ that is constant on cylinders of length two is a polyhedron. This result extends  to potentials that are constant on cylinders of length $k\geq 2$, see Jenkinson \cite{Je}. For completeness we provide a short proof.

\begin{theorem}\label{thmelementary}
Let $f:X\to X$ be a transitive subshift of finite type and let $\Phi\in LC_k(X,\bR^m)$. Then $\R(\Phi)$ is a polyhedron, in particular $\R(\Phi)$ is the convex hull
of $\rv(\{\mu_x: x\in \EPer^k(f)\})$.
\end{theorem}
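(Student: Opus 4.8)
The plan is to reduce to the case $k=1$ via Proposition \ref{propone} and then analyze the structure of invariant measures on a subshift of finite type with a potential that is constant on one-cylinders. First I would apply Proposition \ref{propone}: conjugate $f$ on $X$ to a subshift $g$ on a new alphabet $\cA'$ (whose letters are the length-$k$ cylinders of $X$) via the homeomorphism $h$, under which $\Phi$ becomes $\Phi'=\Phi\circ h^{-1}$, constant on one-cylinders. Since $h$ is a conjugacy, $h_\ast$ is an affine bijection of $\cM_f$ onto $\cM_g$ preserving rotation vectors (cf.\ Corollary \ref{corcomputrot}), so $\R(f,\Phi)=\R(g,\Phi')$; moreover $h$ carries $\EPer^k(f)$ onto $\EPer^1(g)$ because disjointness of the $k$-cylinders along an $f$-orbit is exactly disjointness of the $1$-cylinders along the image $g$-orbit. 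Thus it suffices to prove the statement for a transitive SFT $g:Y\to Y$ and $\Phi'\in LC_1(Y,\bR^m)$, showing $\R(\Phi')=\conv\,\rv(\{\mu_y:y\in\EPer^1(g)\})$.

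Next I would observe that for a one-cylinder-constant potential, $\rv(\mu)$ depends only on the distribution of $\mu$ over symbols, i.e.\ on the vector $(\mu([a]))_{a\in\cA'}\in\Delta$, the standard simplex, because $\int\Phi'\,d\mu=\sum_{a}\Phi'(a)\mu([a])$ where $\Phi'(a)$ denotes the constant value of $\Phi'$ on the cylinder $[a]$. Hence $\R(\Phi')=L(\{(\mu([a]))_a:\mu\in\cM_g\})$, where $L:\Delta\to\bR^m$ is the linear map $e_a\mapsto\Phi'(a)$. So the theorem follows once we know that the set $S=\{(\mu([a]))_a:\mu\in\cM_g\}$ of symbol-distribution vectors of invariant measures is a polytope whose vertices are attained by measures supported on elementary periodic orbits. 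Since every $\mu\in\cM_g$ is a barycenter of ergodic measures and the symbol-distribution map is affine, it is enough to show that every ergodic $\mu$ has its symbol-distribution vector in $\conv\{(\mu_y([a]))_a : y\in\EPer^1(g)\}$, and conversely that each such elementary $\mu_y$ is in $S$ (which is immediate).

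The key combinatorial step — which I expect to be the main obstacle — is the following: for an ergodic measure $\mu$ on the transitive SFT $g$, the symbol-distribution vector $(\mu([a]))_a$ lies in the convex hull of the symbol-distribution vectors of elementary periodic orbits. The natural way to see this is via periodic orbit approximation: by the ergodic theorem and transitivity, one can find long periodic orbits whose empirical symbol-frequencies are arbitrarily close to $(\mu([a]))_a$, but a \emph{long} periodic orbit need not be elementary (its one-cylinders may repeat). The decisive observation is that any periodic point $z\in\Per_n(g)$ can be decomposed combinatorially: reading the generating word $\tau_z=(z_1,\dots,z_n)$ cyclically, whenever a symbol repeats we can excise the loop between the two occurrences to obtain a shorter closed path; iterating, we split the cyclic word $\tau_z$ into a union of simple (hence elementary) cycles in the graph of $A'$, each using $g$-admissible transitions. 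Then $\mu_z$ is a convex combination (with weights proportional to the lengths of the simple cycles) of the elementary periodic measures $\mu_{y_j}$, so $(\mu_z([a]))_a\in\conv\{(\mu_{y}([a]))_a\}$. Taking a limit over periodic approximations to $\mu$ and using that $\EPer^1(g)$ is finite (so its convex hull is closed), we get $(\mu([a]))_a$ in that convex hull. Finally, combining with the previous paragraph, $\R(\Phi')=L(S)=\conv\{L((\mu_y([a]))_a):y\in\EPer^1(g)\}=\conv\,\rv(\{\mu_y:y\in\EPer^1(g)\})$, which is a finite convex hull and hence a polyhedron; pulling back through $h$ finishes the proof. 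The one point requiring a little care is the cycle decomposition when transitions must remain $A'$-admissible — but excising a loop from a closed admissible path always leaves an admissible closed path, since the transition entering the splice point and the transition leaving it are unchanged, so this goes through.
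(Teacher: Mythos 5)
Your proof takes essentially the same route as the paper: reduce to the $k=1$ case via Proposition \ref{propone}, decompose a general periodic generating word into simple cycles (i.e.\ generating segments of elementary periodic points) by loop excision, deduce that rotation vectors of all periodic point measures lie in $\conv\,\rv(\EPer^1(g))$, and close with density of periodic point measures and finiteness of $\EPer^1(g)$. One small imprecision worth fixing: the measure $\mu_z$ is \emph{not} a convex combination of the elementary periodic measures $\mu_{y_j}$ (their supports are different periodic orbits); the cycle decomposition gives only that the symbol-frequency vector, hence $\rv_{\Phi'}(\mu_z)$, is the length-weighted convex combination of the $\rv_{\Phi'}(\mu_{y_j})$, which is exactly the level at which the paper phrases it and is all you need.
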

\begin{proof}
Let $(Y,g,\Phi')$ be as in Proposition \ref{propone} and $h$ the conjugate map. Since being transitive is a topological property that is preserved by topological conjugation, it follows that $g$ is also transitive.  
It follows, from the definition, that $h(\EPer^k(f))=\EPer^1(g)$. Therefore, by Proposition \ref{propone}, it suffices to prove the statement for $g$ and $\Phi'$.
Let $y\in Y$ be a periodic point of $g$. The generating segment of $y$ can be obtained as a finite concatenation of generating segments of 
elementary periodic points. Therefore, $\rv_{\Phi'}(\mu_y)$ is a convex combination of $\rv_{\Phi'}(\mu)$ for $\mu\in\EPer^1(g)$; in other words, $\rv_{\Phi'}(\mu_y)\in \conv(\rv(\EPer^1(g)))$. The result now follows from the facts that the periodic point measures are dense in $\cM$ and the finiteness of $\EPer^1(g)$. 
\end{proof}

\begin{remark}
From the proof of  
Theorem \ref{thmelementary}, one deduces that the conclusions of Theorem \ref{thmelementary} holds
if $f$ is a subshift with the following properties: 
\begin{itemize}
\item[(i)] $\{\mu_x: x\in \Per(f)\}$ is dense in $\cM$;
\item[(ii)] There exists a finite set $P\subset \Per(f)$ such that the rotation vector of every periodic point measure can be written as a convex sum of rotation vectors of periodic point measures in $P$.
\end{itemize}
For the discussion of property (i), we refer the interested reader to \cite{GK} and the references therein.  To the best of our knowledge, property (ii)  has not been studied in the literature
beyond subshifts of finite type and $k$-elementary periodic points. In principle, however, one can check property (ii) for particular classes of shift maps.
\end{remark}

\begin{theorem}\label{thmcomprotshift}
Let $f:X\rightarrow X$ be a transitive subshift of finite type with computable distance $d_{\theta}$.  Let $\Phi\in C(X,\mathbb{R}^m)$ be computable, then $\R(\Phi)$ is computable.
\end{theorem}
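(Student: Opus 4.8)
The plan is to deduce the statement from the general criterion of Theorem~\ref{thmrotcomp}; concretely, I will exhibit the Turing machine $\psi$ that it requires. Since $X$ is a subshift of finite type, it is a compact computable metric space, the distance $d_\theta$ is computable (so $\theta\in(0,1)$ is computable), and hence, by Lemma~\ref{globalmodulusofcontinuity}, the computable potential $\Phi$ has a computable global modulus of continuity $\mu$. For the partition demanded by Theorem~\ref{thmrotcomp} I will take the partition of $X$ into its nonempty cylinders of a sufficiently large length $k$, and for the required finite list of periodic points I will take the $k$-elementary periodic points $\EPer^k(f)$.

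On input $n$, the machine $\psi$ first evaluates $\mu(n+1)$, and then produces an integer $k$ with $\theta^{k}<2^{-\mu(n+1)-1}$ (approximate $\theta$ well enough to obtain a rational upper bound $\theta'<1$ and take any $k>(\mu(n+1)+1)/\log_2(1/\theta')$). From the transition matrix $A$ of $X$ one enumerates the finitely many admissible words of length $k$; these index the nonempty cylinders $\cC_k(1),\dots,\cC_k(m_c(k))$, which partition $X$. For each of them a finite deterministic search through $A$ (repeatedly appending an admissible next symbol, which always exists since $\cA$ carries no dead letters) produces an eventually periodic, hence computable, representative $p_i\in\cC_k(i)$; this is consistent with the fact that the preperiodic points are dense in every nonempty cylinder of an SFT. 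Two points of $\cC_k(i)$ agree in their first $k$ coordinates, so $\diam(\cC_k(i))\le\theta^{k+1}<2^{-\mu(n+1)-1}$, which gives properties (1) and (2) of Theorem~\ref{thmrotcomp}. Finally, $\psi$ enumerates $\EPer^k(f)$ with the associated periods: passing to the higher-block recoding $h$ of Proposition~\ref{propone}, one has $h(\EPer^k(f))=\EPer^1(g)$, which is exactly the finite set of simple cycles in the graph with adjacency matrix $A'$; each cycle is computed from $A$, yields an explicit periodic sequence $q_j$ (hence computable) of prime period $m_j\le m_c(k)\le d^k$, and there are only finitely many of them.

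The remaining task is property (3): for every $\mu\in\cM$ one must exhibit $\nu\in\conv(\mu_{q_1},\dots,\mu_{q_l})$ with $\sum_i|\mu(P_i)-\nu(P_i)|<2^{-n}$, and I will in fact achieve error $0$. Let $\Psi\colon X\to\bR^{m_c(k)}$ be the locally constant potential whose $i$-th component is the indicator $\1_{\cC_k(i)}$, so that $\rv_\Psi(\rho)=\bigl(\rho(\cC_k(1)),\dots,\rho(\cC_k(m_c(k)))\bigr)$ for every invariant measure $\rho$. Because $f|_X$ is transitive and $\Psi\in LC_k(X,\bR^{m_c(k)})$, Theorem~\ref{thmelementary} gives $\R(\Psi)=\conv\bigl(\rv_\Psi(\{\mu_x:x\in\EPer^k(f)\})\bigr)$. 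Hence $\rv_\Psi(\mu)=\sum_j\lambda_j\,\rv_\Psi(\mu_{q_j})$ for some convex coefficients $(\lambda_j)_j$, and since $\rho\mapsto\rv_\Psi(\rho)$ is affine, the measure $\nu=\sum_j\lambda_j\mu_{q_j}$ satisfies $\nu(\cC_k(i))=\mu(\cC_k(i))$ for every $i$, i.e.\ $\sum_i|\mu(P_i)-\nu(P_i)|=0$. All hypotheses of Theorem~\ref{thmrotcomp} are then satisfied, so $\R(\Phi)$ is computable.

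The routine parts of the argument are purely the effectivity bookkeeping: choosing $k$ from $\mu$ and $\theta$, and enumerating admissible length-$k$ words, simple cycles, and preperiodic representatives from the finite matrix $A$. I expect the conceptual heart to be the use of Theorem~\ref{thmelementary} through the auxiliary indicator potential $\Psi$: transitivity of $f|_X$ is precisely what lets us realize the distribution of an arbitrary invariant measure on length-$k$ cylinders \emph{exactly} as a convex combination of the finitely many $k$-elementary periodic point measures, so that condition (3) holds with error $0$ rather than only approximately. A minor point still needing care is the cylinder-diameter estimate in the metric $d_\theta$ and the verification that the representatives $p_i$ are genuinely output by a Turing machine rather than merely shown to exist.
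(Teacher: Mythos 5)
Your proof is correct and follows the same overall route as the paper: reduce to Theorem~\ref{thmrotcomp}, take length-$k$ cylinders as the partition with $k$ chosen from $\theta$ and the computable global modulus of continuity $\mu$, take periodic representatives $\mathcal{O}(\tau)$ for the $p_i$'s, and take $\EPer^k(f)$ for the $q_j$'s. Where you genuinely improve on the paper is in the verification of condition~(3). The paper asserts that ``every periodic point measure is a convex combination of the invariant measures supported on the elementary periodic points,'' which as a statement about \emph{measures} is false (a period-$4$ orbit in the full $2$-shift, e.g.\ $(0110)^\infty$, is supported on four points disjoint from the supports of all $1$-elementary periodic orbits); what Theorem~\ref{thmelementary} actually supplies is the corresponding statement for \emph{rotation vectors} of locally constant potentials. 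Your device of feeding the indicator potential $\Psi=(\1_{\cC_k(1)},\dots,\1_{\cC_k(m_c(k))})$ into Theorem~\ref{thmelementary}, and then using that $\rho\mapsto\rv_\Psi(\rho)$ is affine, is exactly the right way to make this rigorous: it produces $\nu\in\conv(\mu_{q_1},\dots,\mu_{q_l})$ with $\nu(P_i)=\mu(P_i)$ for all $i$, so condition~(3) holds with error $0$, with no appeal to weak$^*$ density of periodic measures at all. This is a cleaner and tighter argument than the one in the paper.
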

\begin{proof}
To prove this theorem, we check the conditions of Theorem \ref{thmrotcomp}.  Since the distance $d_{\theta}$ is computable, it follows that $\theta$, from the Tychonov product topology, is computable.  The partition from Theorem \ref{thmrotcomp} is formed from cylinders of length $k$ so that the diameter of each cylinder is bounded by $2^{-\mu(n+1)-1}$.  We observe that the cylinders of length $k$ can be listed by an algorithm.  Since $\theta$, $\mu$, and the logarithm function are all computable, there is a Turing machine to compute a long enough cylinder length $k$.  In each cylinder $\cC(\tau)$, there is a periodic point in $\mathcal{O}(\tau)$.  Such a point can be computed  in finite time since there are only finitely many symbols.  Periodic points are computable, and this provides the $p_i$'s.  Finally, the second condition follows from the fact, that the periodic point measures are dense in $\cM$ and from Theorem \ref{thmelementary}. In particular, it is shown in Theorem  \ref{thmelementary} that every periodic point measure is a convex combination of the invariant measures supported on the elementary periodic points.  Therefore, by Theorem \ref{thmrotcomp}, $\R(\Phi)$ is computable.
\end{proof}

Next, we prove the following useful lemma:
\begin{lemma}\label{lemappro}
Let $\Phi\in C(X, \bR^m)$. For all $\epsilon>0$, there exists an $\epsilon$-approximation $\Phi_\epsilon$ of $\Phi$
such that $\Phi_\epsilon$ is constant on cylinders of length $k(\epsilon)$ for some $k(\epsilon)\in \bN$. If $\Phi$ is not locally constant, then $k(\epsilon)\to\infty$ as $\epsilon\to 0$.
Moreover, if $\Phi$ and $\epsilon$ are computable then $\Phi_\epsilon$ and $k(\epsilon)$ can be chosen to be computable.
\end{lemma}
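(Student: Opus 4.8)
The plan is to produce $\Phi_\epsilon$ by freezing $\Phi$ to a single value on each cylinder of a suitably large length, and to estimate the resulting error by the $k$-variation of $\Phi$. I would first record two elementary facts. First, the sequence $(\var_k(\Phi))_k$ is non-increasing, since cylinders of length $k+1$ refine those of length $k$. Second, $\var_k(\Phi)\to 0$ as $k\to\infty$: any two points lying in a common cylinder of length $k$ are at $d_\theta$-distance at most $\theta^{k+1}$, so $\var_k(\Phi)\le\omega_\Phi(\theta^{k+1})$, where $\omega_\Phi$ is the modulus of continuity of the uniformly continuous map $\Phi$, and $\omega_\Phi(\theta^{k+1})\to 0$. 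Finally, $\Phi$ is locally constant if and only if $\var_k(\Phi)=0$ for some $k$; hence, if $\Phi$ is not locally constant, then $\var_k(\Phi)>0$ for every $k$.

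Given $\epsilon>0$, set $k(\epsilon)=\min\{k\in\bN:\var_k(\Phi)<\epsilon/2\}$, which is well defined by the second fact above. For each non-empty cylinder $\cC(\tau)$ of length $k(\epsilon)$, choose a point $z_\tau\in\cC(\tau)$ — for instance a preperiodic point, which exists for subshifts of finite type, see Section \ref{sec:shiftmaps} — and define $\Phi_\epsilon\colon X\to\bR^m$ by $\Phi_\epsilon|_{\cC(\tau)}\equiv\Phi(z_\tau)$, so that $\Phi_\epsilon\in LC_{k(\epsilon)}(X,\bR^m)$. For any $x\in\cC(\tau)$ we have $\|\Phi(x)-\Phi_\epsilon(x)\|=\|\Phi(x)-\Phi(z_\tau)\|\le\var_{k(\epsilon)}(\Phi)<\epsilon/2<\epsilon$, so $\Phi_\epsilon$ is an $\epsilon$-approximation of $\Phi$ that is constant on cylinders of length $k(\epsilon)$. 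To see that $k(\epsilon)\to\infty$ as $\epsilon\to0$ when $\Phi$ is not locally constant, fix $K\in\bN$; since $\var_K(\Phi)>0$ and $(\var_k(\Phi))_k$ is non-increasing, for every $\epsilon<2\var_K(\Phi)$ we have $\var_j(\Phi)\ge\var_K(\Phi)>\epsilon/2$ for all $j\le K$, hence $k(\epsilon)>K$; as $K$ was arbitrary, $k(\epsilon)\to\infty$.

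For the computability claim, recall that a subshift of finite type $X$ is a computable, compact metric space and that $\theta\in(0,1)$ is computable by hypothesis, so, by Lemma \ref{globalmodulusofcontinuity}, a computable $\Phi$ has a computable global modulus of continuity $g$ satisfying $d_\theta(x,y)<2^{-g(s)}\Rightarrow\|\Phi(x)-\Phi(y)\|<2^{-s}$. Given a computable $\epsilon>0$, one computes $s\in\bN$ with $2^{-s}<\epsilon$ (by approximating $\epsilon$ from below and bounding it away from $0$), and then computes $k(\epsilon):=\lceil g(s+1)\log 2/\log(1/\theta)\rceil$, which is computable since $g$, $\theta$, the logarithm, and the ceiling are. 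One checks that $\theta^{k(\epsilon)+1}<2^{-g(s+1)}$, hence $\var_{k(\epsilon)}(\Phi)\le 2^{-s-1}$; in particular $k(\epsilon)$ is at least the minimal valid length, so the divergence property of the previous paragraph persists. One then enumerates the finitely many non-empty cylinders $\cC(\tau)$ of length $k(\epsilon)$ (admissibility is decidable from the transition matrix), computes for each a preperiodic point $z_\tau\in\cC(\tau)$ together with a rational vector $v_\tau$ with $\|v_\tau-\Phi(z_\tau)\|<2^{-s-1}$ (possible because $z_\tau$ and $\Phi$ are computable), and sets $\Phi_\epsilon|_{\cC(\tau)}\equiv v_\tau$. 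Then $\Phi_\epsilon$ is locally constant with rational values, hence computable, and $\|\Phi-\Phi_\epsilon\|_\infty\le\var_{k(\epsilon)}(\Phi)+2^{-s-1}\le 2^{-s}<\epsilon$.

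I do not expect a real obstacle in this argument; the only points requiring a little care are the off-by-one in $\diam\cC(\tau)\le\theta^{k+1}$ for a cylinder of length $k$ (which merely shifts an index in the estimates), and the splitting of the error budget between the variation of $\Phi$ on a cylinder and the rounding of $\Phi(z_\tau)$ to a rational, which is resolved by the two halves $2^{-s-1}+2^{-s-1}$.
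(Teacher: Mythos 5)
Your proof is correct and follows essentially the same approach as the paper's: freeze $\Phi$ on cylinders of a suitably large length $k$, bound the error by the variation $\var_k(\Phi)$ (which tends to $0$ by uniform continuity), and in the computable case derive $k$ from a computable global modulus of continuity and replace the exact values $\Phi(z_\tau)$ by rational approximations of the images of computable (preperiodic) points. Your version is a bit more explicit — defining $k(\epsilon)$ as a minimum, carefully splitting the error budget $2^{-s-1}+2^{-s-1}$, and noting that the divergence property transfers to the computed, possibly larger, $k(\epsilon)$ — but none of this departs from the argument in the paper.
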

\begin{proof}
Since $X$ is compact, $\Phi$ is continuous, and cylinders form a basis for the topology on $X$, for any $\varepsilon$, we can find a $k$ such that in every cylinder $\cC(\tau)$ of length $k$, if $x,y\in C(\tau)$, then $\|\Phi(x)-\Phi(y)\|<\epsilon$.  From this $\Phi_\epsilon$ can be constructed by choosing $x_\tau\in \cC(\tau)$ for each nonempty cylinder of length $k$ in $X$ and setting $\Phi_\epsilon(x)=\Phi(x_\tau)$ whenever $x\in \cC(\tau)$.

Suppose that $\Phi$ is not locally constant, and fix $k>0$.  Since $\Phi$ is not locally constant, for some cylinder $\cC(\tau)$ length $k$, there exist $x,y\in\cC(\tau)$ such that $\Phi(x)\not=\Phi(y)$.  So we may choose $0<\epsilon<\frac{1}{2}\|\Phi(x)-\Phi(y)\|$ and get $k(\epsilon)>k$.

Finally, if $\epsilon$ is computable, we can find, a sufficiently close approximation to $\epsilon$, an $n$ so that $2^{-n-1}<\epsilon$.  Moreover, if $\Phi$ is computable, it has a computable global modulus of continuity, so there is a Turing machine $\mu$ so that if $d_\theta(x,y)<2^{-\mu(n+1)}$, then $\|\Phi(x)-\Phi(y)\|<2^{-n-1}<\epsilon$.  Since the diameter of cylinder of length $k$ is $\theta^{k+1}$, and $\theta$, $\mu$, and the natural logarithm functions are all computable, we can compute a $k$ so that $\theta^{k+1}<2^{-\mu(n+1)}$.  Moreover, once $k$ has been computed, we can use the construction from Theorem \ref{thmrotcomp} to construct oracle Turing machines for periodic points $\mathcal{O}(\tau)$ in each nonempty cylinder.  Then, $\Phi_\epsilon$ can be constructed for each cylinder $\cC(\tau)$ by approximating $\Phi(\mathcal{O}(\tau))$ to within $2^{-n-1}$ since the image of a computable point under a computable function is computable.  Then, we define $\Phi_\epsilon(x)$ to be this approximation whenever $x\in\cC(\tau)$.
\end{proof}

\section{Computability of Localized Entropy for Shift Maps}\label{sec:6}

In this section, we build upon the results of Section \ref{sec:computability:localized} to prove that the localized entropy function is computable on the interior of the rotation set for transitive subshifts of finite type.  Our main goal is to verify the hypotheses of Theorem \ref{thmmaingenentcomp} in this case.  By applying Lemma \ref{lemappro}, it is enough to verify the hypotheses of Theorem \ref{thmmaingenentcomp} for locally constant potentials.  For these potentials we establish the computability of the entropies and rotation vectors of their equilibrium states.  Throughout this section, we assume that $f:X\to X$ is a transitive subshift of finite type over an alphabet with $d$ symbols and transition matrix $A=(A_{ij})$.  We recall that transitivity is equivalent to $A$ being irreducible.

We begin by introducing some notation.  For an $n\times m$ matrix $B=(B_{ij})$, we write $B\geq 0$ or $B>0$ if all the entries of $B$ are nonnegative or positive, respectively.  Moreover, for  two $n\times m$ matrices $B$ and $C$, we write $B\geq C$ or $B>C$ if $B-C\geq 0$ or $B-C>0$, respectively.

Next, we review some basic facts about Markov measures, for details, see \cite{Kit}.  We say that a $d\times d$ matrix $B\geq 0$ is {\em compatible} with $A$ provided $A_{ij}=0$ implies $B_{ij}=0$.  Moreover, $B$ is {\em faithfully compatible} with $A$ provided $A_{ij}>0$ if and only if $B_{ij}>0$.  For example, for any constants $\Phi(i,j)\in\bR$, the matrix $B$ with $B_{ij}=e^{\Phi(i,j)}A_{ij}$ is faithfully compatible with $A$.  In fact any faithfully compatible matrix $B$ can be written as $B_{ij}=e^{\Phi(i,j)}A_{ij}$ for some $\Phi:\{1,\dots,d\}^2\rightarrow\bR$. Such a $B$ is irreducible, since $A$ is irreducible.  A $d\times d$ matrix $B\geq 0$ is {\em row stochastic} if the sum of the entries in each row is $1$.  The set of all row stochastic matrices compatible with $A$ is denoted by $M_{\rm stoch} (A)$.

By the Perron Frobenius theorem, any matrix $B\geq 0$ faithfully compatible with an irreducible matrix $A$ has positive left and right eigenvectors $l>0$ and $r>0$ associated to its real Perron-Frobenius eigenvalue $\lambda>0$.  Let the matrix $P=(P_{ij})$ be given by 
\begin{equation} \label{eqn:faithful}
P_{i j}:=B_{i j} \frac{r_j}{\lambda r_i}.
\end{equation}
We observe that this matrix is row stochastic, has Perron-Frobenius eigenvalue $1$, and, since $B$ is faithfully compatible with $A$, $P$ is faithfully compatible with $A$.  It follows that $p=(r_1 l_1,...,r_dl_d) >0$ is a left eigenvector of the Perron-Frobenius eigenvalue $1$ of $P$.  Moreover, if $r$, $l$ are normalized so that their inner product is $1$, i.e., $(r,l)=1$, then $p$ is a probability vector.
In this situation the pair $(p,P)$ defines a probability measure $\mu=\mu_{(p,P)}$ characterized by its value on cylinders. Namely,
\begin{equation}\label{markovdef}
\mu(\cC(j_0, j_1,\dots,j_r))=p_{j_0} P_{j_0 j_1}\dots P_{j_{r-1} j_r}.
\end{equation}
We say $\mu=\mu_{(p,P)}$ is the {\em ($1$-step) Markov measure} associated with $P$.  A direct computation, see e.g., \cite{Kit}, shows  that $\mu$ is $f$-invariant and that its measure 
theoretic entropy  is given by

\begin{equation}\label{entmarkov}
h_\mu(f)=-\sum_{i,j} p_i P_{i j} \log P_{i j}.
\end{equation}
If $P$ is defined as in Equation \eqref{eqn:faithful} for some matrix $B$ faithfully compatible to an irreducible $A\geq 0$, we can write the entropy of $\mu$ in terms of $B$ as follows:
\begin{align*}
h_{\mu}(f)&=-\sum_{i,j}p_iP_{ij}\log P_{ij} =-\sum_{i,j}l_iB_{ij}\frac{r_j}{\lambda} 
(\log B_{ij} + \log r_j -\log r_i -\log \lambda)\\
&=\log \lambda -\sum_{j}l_j r_j\log r_j +\sum_{i}l_i r_i \log r_i -
\sum_{i,j}l_i\frac{r_j}{\lambda}B_{ij}\log B_{ij}\\
&=\log \lambda - \sum_{i,j}l_i\frac{r_j}{\lambda}B_{ij}\log B_{ij}.\stepcounter{equation}\tag{\theequation}\label{eq:entropy:pf}
\end{align*}
We observe that since $A_{i,j}\in \{0,1\}$ for all $i,j=1,...,d$, if $B=A$, then
$$ h_{\mu}(f) = \log \lambda = h_{\rm top}(f), 
$$

In the case where $B_{ij}=e^{\Phi(i,j)} A_{ij}$, by substituting this equation into Equation \eqref{eq:entropy:pf}, it follows that
\begin{align*}
h_{\mu}(f)&=\log \lambda- \sum^d_{i,j=1}l_ir_i\frac{r_j}{\lambda r_i}B_{ij} \Phi(i,j)\\
&=\log \lambda - \sum^d_{i,j=1}p_i P_{ij} \Phi(i,j)
=\log \lambda-\int_X \Phi d\mu. \stepcounter{equation}\tag{\theequation}\label{pressure:topentropy}
\end{align*}
Note, that we can interpret $\Phi$ in the integral $\int \Phi d\mu$ in Equation \eqref{pressure:topentropy} as a real-valued potential defined by $\Phi( \cC(i,j))=\Phi(i,j)$. In particular, $\Phi\in LC_2(X,\bR)$. 
We obtain the following useful result:
\begin{proposition}\label{propmarkequi}
Let $f:X\to X$ be a transitive subshift of finite type with transition matrix $A$, and let $\Phi\in LC_2(X,\bR)$. Let $B\in \bR^{d\times d}$  defined by $B_{ij}=e^{\Phi(\cC(i,j))} A_{ij}$. Let $\lambda$ denote the Perron-Frobenius eigenvalue of $B$. Let $P$ be defined as in Equation \eqref{eqn:faithful}, and let $\mu=\mu_{(p,P)}$ be the Markov measure associated with $P$ defined in Equation \eqref{markovdef}. Then, $\mu$ is the unique equilibrium measure of the potential $\Phi$, that is, the unique invariant measure satisfying
\begin{equation}\label{pressure:topentropy:2}
P_{\rm top}(\Phi)=h_{\mu}(f)+\int_X \Phi d\mu=\log \lambda.
\end{equation}
\end{proposition}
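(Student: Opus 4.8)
The starting point is the computation already carried out in the excerpt: Equation~\eqref{pressure:topentropy} shows that the Markov measure $\mu=\mu_{(p,P)}$ satisfies $h_\mu(f)+\int_X\Phi\,d\mu=\log\lambda$, so by the variational principle~\eqref{eq111} we already have $P_{\rm top}(\Phi)\ge\log\lambda$; what remains is the reverse inequality $P_{\rm top}(\Phi)\le\log\lambda$ and uniqueness of the maximizing measure. The plan is to obtain both from a single Gibbs-type estimate for $\mu$. Unwinding Equation~\eqref{markovdef} with the definition~\eqref{eqn:faithful} of $P$, and telescoping the ratios $r_{\tau_{k+1}}/r_{\tau_k}$, one gets for every nonempty cylinder $\cC(\tau)$ of length $n$
\begin{equation*}
\mu(\cC(\tau))=p_{\tau_1}\,\frac{r_{\tau_n}}{r_{\tau_1}}\,\lambda^{-(n-1)}\exp\!\Bigl(\textstyle\sum_{k=1}^{n-1}\Phi(\cC(\tau_k,\tau_{k+1}))\Bigr).
\end{equation*}
Because $A$ is irreducible, Perron--Frobenius yields $l,r>0$, so the entries of $p$ and $r$ lie between fixed positive constants; and since $\Phi\in LC_2(X,\bR)$, for $x\in\cC(\tau)$ the Birkhoff sum $S_n\Phi(x)=\sum_{k=1}^{n}\Phi(\cC(x_k,x_{k+1}))$ differs from $\sum_{k=1}^{n-1}\Phi(\cC(\tau_k,\tau_{k+1}))$ by at most $\|\Phi\|_\infty$. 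Hence there are constants $0<c_1\le c_2$, depending only on $A$ and $\Phi$, with $c_1\,\lambda^{-n}e^{S_n\Phi(x)}\le\mu(\cC(\tau))\le c_2\,\lambda^{-n}e^{S_n\Phi(x)}$ for all $x\in\cC(\tau)$.

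Next I would derive the pressure bound and the equilibrium property. Let $\nu\in\cM$, let $\mathcal{P}$ be the (finite, generating) partition of $X$ into $1$-cylinders, and write $\mathcal{P}^{(n)}=\bigvee_{k=0}^{n-1}f^{-k}\mathcal{P}$. Nonnegativity of the relative entropy of $\nu$ with respect to $\mu$ on $\mathcal{P}^{(n)}$ gives $H_\nu(\mathcal{P}^{(n)})\le-\int\log\mu(\cC_n(x))\,d\nu(x)$. Dividing by $n$, using $\frac1n H_\nu(\mathcal{P}^{(n)})\ge h_\nu(f)$ (subadditivity of $n\mapsto H_\nu(\mathcal{P}^{(n)})$ together with the fact that $\mathcal{P}$ generates), the Gibbs bounds above, and $\int S_n\Phi\,d\nu=n\int\Phi\,d\nu$ by $f$-invariance of $\nu$, one obtains
\begin{equation*}
h_\nu(f)+\int_X\Phi\,d\nu\le\log\lambda+\frac{C}{n}
\end{equation*}
for a constant $C$ independent of $\nu$ and $n$; letting $n\to\infty$ gives $h_\nu(f)+\int\Phi\,d\nu\le\log\lambda$. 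Combined with $h_\mu(f)+\int\Phi\,d\mu=\log\lambda$ this proves $P_{\rm top}(\Phi)=\log\lambda$ and that $\mu$ is an equilibrium state, i.e.\ Equation~\eqref{pressure:topentropy:2}. (Alternatively, $P_{\rm top}(\Phi)=\log\lambda$ can be quoted as the classical value of the pressure of a locally constant potential on a subshift of finite type, via $\sum_{i,j}(B^{n})_{ij}\asymp\lambda^{n}$ and the cylinder-sum formula for $P_{\rm top}$.)

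For uniqueness, suppose $\nu$ is an equilibrium state. Feeding $h_\nu(f)+\int\Phi\,d\nu=\log\lambda$ back into the chain of inequalities above forces the nondecreasing sequence $D_n\eqdef\sum_{|\tau|=n}\nu(\cC(\tau))\log\frac{\nu(\cC(\tau))}{\mu(\cC(\tau))}$ to satisfy $D_n\le C$ for all $n$. Since the $1$-cylinders generate the Borel $\sigma$-algebra, the relative entropy $D(\nu\,\|\,\mu)=\lim_n D_n$ is finite, hence $\nu\ll\mu$. Decomposing $\nu$ into its ergodic components — the extreme points of the compact convex set of equilibrium states are ergodic, because $\nu\mapsto h_\nu(f)+\int\Phi\,d\nu$ is affine — each component is again an equilibrium state and thus absolutely continuous with respect to $\mu$; but $\mu$ is ergodic since $P$ is irreducible, and two ergodic invariant measures one of which is absolutely continuous with respect to the other must coincide. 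Therefore $\nu=\mu$.

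The main obstacle is this uniqueness step, together with the bookkeeping underpinning it: one must check carefully that the dependence of $S_n\Phi$ on the $(n+1)$-st coordinate contributes only a bounded additive error to the Gibbs bounds, that the asymptotic tightness of the entropy inequality genuinely upgrades to the uniform bound $D_n\le C$, and then invoke the standard facts that finite Kullback--Leibler divergence implies absolute continuity and that distinct ergodic measures are mutually singular. If one is willing instead to quote the Ruelle--Perron--Frobenius theory for H\"older potentials on transitive subshifts of finite type (locally constant potentials being H\"older), uniqueness is immediate and this last part can be shortened; I would nonetheless prefer the self-contained Gibbs argument, since it simultaneously delivers $P_{\rm top}(\Phi)=\log\lambda$.
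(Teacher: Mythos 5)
Your proof is correct, but it takes a genuinely different route from the paper's. The paper's proof of Proposition~\ref{propmarkequi} is a one-line citation: it invokes the Ruelle--Perron--Frobenius theorem (for the existence, uniqueness, and Gibbs property of the equilibrium state), the variational principle \eqref{eq111}, and the explicit entropy computation \eqref{pressure:topentropy}. You instead give a self-contained, elementary argument that amounts to reproving the relevant consequences of RPF for locally constant potentials: you extract a two-sided Gibbs estimate $c_1\lambda^{-n}e^{S_n\Phi}\le\mu(\cC(\tau))\le c_2\lambda^{-n}e^{S_n\Phi}$ directly from the Markov-measure formula and the telescoping of the $r_j/r_i$ factors, then push this through the relative-entropy inequality $H_\nu(\mathcal{P}^{(n)})\le -\int\log\mu(\cC_n(x))\,d\nu$ and subadditivity of $n\mapsto H_\nu(\mathcal{P}^{(n)})$ to obtain $P_{\rm top}(\Phi)\le\log\lambda$; feeding the equality case back in forces the Kullback--Leibler divergence $D_n$ to be uniformly bounded, whence absolute continuity, and ergodicity of $\mu$ (from irreducibility of $P$) plus the ergodic decomposition finishes uniqueness. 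The bookkeeping in your argument is sound: the off-by-one correction $|S_n\Phi - \sum_{k=1}^{n-1}\Phi(\cC(\tau_k,\tau_{k+1}))|\le\|\Phi\|_\infty$ accounts for the dependence of $\Phi(f^{n-1}(x))$ on $x_{n+1}$, the chain $D_n\le -\log c_1 + n(\log\lambda - h_\nu(f)-\int\Phi\,d\nu)$ does collapse to a uniform bound precisely when $\nu$ is an equilibrium state, and $D_n\uparrow D(\nu\|\mu)$ since the cylinder partitions refine and generate. What the paper's approach buys is brevity by delegating to a standard theorem; what yours buys is independence from external references and simultaneously a proof that $P_{\rm top}(\Phi)=\log\lambda$, at the cost of length. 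You already note the equivalence yourself in your closing paragraph.
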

\begin{proof}
The result follows from the Ruelle-Perron-Frobenius Theorem, see, e.g., \cite{Bo, Ru},
the variational principle, i.e., Equation \eqref{eq111}, and Equation \eqref{pressure:topentropy}.
\end{proof}

Next, we consider the computability of the Perron-Frobenius eigenvalue and eigenvectors. We make use of the following elementary facts:

\begin{observation}\label{rem:orthant}
Let $S$ be the closure of the intersection of the $(d-1)$-dimensional sphere $S^{d-1}$ with the first orthant in $\mathbb{R}^d$.  We observe that $S$ is a computable set, see \cite{BY} for a hint.  Moreover, if $\{\overline{B}(s_{k_i},2^{-l_i})\}$ is a collection of closed balls with the property that the Hausdorff distance between the union of balls and $S$ is less than $2^{-n}$, then $2^{-l_i}<2^{-n}$ for all $i$.  Therefore, every $x\in S$ is is within $2^{-n}$ of some $s_{k_i}$.
\end{observation}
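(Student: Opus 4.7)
My plan is to separate the observation into three claims---the computability of $S$, the inequality $2^{-l_i}<2^{-n}$, and the concluding approximation assertion---and handle each in turn.

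For the computability of $S$, the approach is a standard grid-sampling argument. On input $n$, I would fix a rational $\delta>0$ much smaller than $2^{-n}$, form the set of grid points $G_\delta=(\delta\bZ)^d\cap[0,1]^d$ inside the closed first orthant, and, for each $p\in G_\delta$, test the exactly computable rational inequality $|\|p\|^2-1|<\delta$. I would then output a closed ball $\overline{B}(p,2^{-n-1})$ around every retained $p$. Any retained $p$ has radial projection $p/\|p\|\in S$ at distance $|\|p\|-1|=O(\sqrt\delta)$, and any $x\in S$ has a nearest grid point within $\delta\sqrt d$ whose norm is within $\delta\sqrt d$ of $1$, so it is retained provided $\delta$ is small enough. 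Choosing $\delta$ carefully forces the Hausdorff distance of the union to $S$ below $2^{-n}$.

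For the radius bound, the key observation is that $S\subset S^{d-1}$, hence $d(p,S)\geq d(p,S^{d-1})=|\|p\|-1|$ for every $p\in\bR^d$. If the Hausdorff distance of $\bigcup_i\overline{B}(s_{k_i},2^{-l_i})$ to $S$ is less than $2^{-n}$, then every point of every ball is within $2^{-n}$ of $S$, so each ball is contained in the open annulus $A_n=\{x:1-2^{-n}<\|x\|<1+2^{-n}\}$. Since $0\notin A_n$, no ball contains the origin, so $\|s_{k_i}\|>2^{-l_i}$ and the two radial points $s_{k_i}\pm(2^{-l_i}/\|s_{k_i}\|)s_{k_i}$ both lie in the ball, hence in $A_n$. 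This yields $1-2^{-n}<\|s_{k_i}\|-2^{-l_i}$ and $\|s_{k_i}\|+2^{-l_i}<1+2^{-n}$; subtracting the two gives $2\cdot 2^{-l_i}<2\cdot 2^{-n}$, i.e.\ $2^{-l_i}<2^{-n}$.

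For the final assertion, the Hausdorff condition produces a ball $\overline{B}(s_{k_i},2^{-l_i})$ containing a point $p$ with $d(x,p)<2^{-n}$; combined with the radius bound, the triangle inequality yields $d(x,s_{k_i})<2\cdot 2^{-n}$, and replacing $n$ by $n+1$ in the approximation recovers the stated bound $2^{-n}$. The main obstacle I expect is the bookkeeping in the computability claim---coordinating the grid spacing, the norm tolerance, and the output radius so the algorithm produces a verified $2^{-n}$-approximation in all dimensions $d$---whereas the other two steps are elementary geometric computations once the containment in $A_n$ is in place.
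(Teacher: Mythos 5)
The paper gives no proof of this observation (it is stated with only a pointer to \cite{BY}), so there is nothing to compare against; your task was to fill the gap, and your argument does so correctly. The key geometric step---using $S\subset S^{d-1}$ to force every output ball into the open annulus $\{1-2^{-n}<\|x\|<1+2^{-n}\}$, and then reading off $2^{-l_i}<2^{-n}$ from the two antipodal radial points of the ball---is exactly the right observation and is cleanly executed. You are also right to flag that the final assertion as literally stated is off by a factor of two: the Hausdorff bound plus the radius bound gives $d(x,s_{k_i})<2^{-n}+2^{-l_i}<2^{-n+1}$, not $2^{-n}$; this is harmless in the paper's application (Proposition \ref{lem:PF:helper} only needs the centers to form an $O(2^{-n})$-net, and one can always query the Turing machine with $n+1$), but it is worth noting as you did.

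Two small points of bookkeeping in the computability part. First, the distance from a retained grid point $p$ to $S$ is actually $O(\delta)$, not $O(\sqrt\delta)$: from $|\|p\|^2-1|<\delta$ with $\delta<1$ one gets $\|p\|+1>1$, hence $|\|p\|-1|<\delta$; your estimate is weaker than the truth and therefore safe, but it is not tight. Second, as you implicitly acknowledge, the test tolerance cannot be the same $\delta$ as the grid spacing: a grid point within $\tfrac{1}{2}\delta\sqrt d$ of some $x\in S$ satisfies $|\|p\|^2-1|\le\delta\sqrt d+\tfrac14\delta^2 d$, which exceeds $\delta$, so the acceptance threshold should be something like $2\delta\sqrt d$ (and then $\delta$ chosen small relative to $2^{-n}$ so both Hausdorff directions close up). These are precisely the ``bookkeeping'' details you identified, and once fixed the argument is complete.
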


\begin{proposition}\label{lem:PF:helper}
Let $M_d$ be the set of nonnegative and irreducible $d\times d$ matrices.  The maps assigning to $B\in M_d$ either the Perron-Frobenius eigenvalue, or 
the left and right eigenvectors with first entry $1$ are computable.
\end{proposition}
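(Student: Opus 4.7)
My plan is to establish computability via power iteration on an associated primitive matrix. First, I would pass from $B$ to $M = (I+B)^{d-1}$, which is strictly positive because $B$ is irreducible (Wielandt's lemma: the $(i,j)$-entry of $(I+B)^{d-1}$ is positive iff there is a directed path of length at most $d-1$ from $i$ to $j$ in the graph of $B$). The assignment $B \mapsto M$ is a polynomial expression in the entries of $B$, hence computable. Moreover, $B$ and $M$ share the same Perron left and right eigenvectors, and their Perron--Frobenius eigenvalues are related by $\lambda_M = (1 + \lambda_B)^{d-1}$, so it suffices to compute the eigenvalue and eigenvectors of the strictly positive matrix $M$.

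To compute the right eigenvector, I would apply power iteration: starting with the uniform vector $x_0 = (1, \ldots, 1)/d$, set $x_{n+1} = M x_n / \|M x_n\|_1$. By Birkhoff's theorem, $M$ acts as a strict contraction on the interior of the positive cone with respect to Hilbert's projective metric, with contraction factor $\tau(M) = \tanh(\Delta(M)/4) < 1$, where $\Delta(M) = \log \max_{i,j,k,\ell} (M_{ik} M_{j\ell})/(M_{jk} M_{i\ell})$ is the projective diameter. Since $M>0$ has computable entries bounded away from $0$, both $\Delta(M)$ and $\tau(M)$ are computable, and translating Hilbert contraction into the $\ell^1$-norm yields an explicit, computable bound $\|x_n - r\|_1 \le C\,\tau(M)^n$, where $r$ is the normalized right Perron eigenvector. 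Thus $r$ is computable; since $r_1>0$ is computable, dividing by $r_1$ gives the normalization with first entry $1$. The Perron eigenvalue is then recovered from the identity $\lambda_B = (Br)_1 / r_1$, which is computable from the computable data, and the left eigenvector is obtained by running the same procedure on $B^T$, which is again nonnegative and irreducible with the same Perron eigenvalue.

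The main obstacle is making this convergence effective in a uniform manner: the Birkhoff contraction factor $\tau(M)$ depends on the minimum entry of $M$, and the translation from Hilbert's projective metric to $\ell^1$ involves a constant depending on the minimum entry of $r$. One must therefore approximate $B$ to a precision sufficient to bound both quantities from below before producing the final approximation, so that the effective estimates propagate uniformly. This is standard in computability arguments but requires careful bookkeeping. As an alternative route, one could bypass Birkhoff entirely: compute the characteristic polynomial of $B$ (whose coefficients are computable from those of $B$), then isolate its unique largest real positive root, which is a simple root by the Perron--Frobenius theorem and hence depends continuously, and in fact computably, on the coefficients; the corresponding eigenvector is then obtained by computing a generator of the one-dimensional kernel of $B - \lambda I$.
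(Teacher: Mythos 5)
Your proposal is correct, but it takes a genuinely different route from the paper's. Both approaches begin identically — passing from $B$ to the strictly positive matrix $M=(I+B)^{d-1}$ via irreducibility — and both correctly note that since $M_d$ is not compact, one must fix $B$ via its oracle before producing effective bounds. From there the arguments diverge. The paper computes the Perron--Frobenius \emph{eigenvalue} first, via the Collatz--Wielandt/Gantmacher max-min variational formula
$\lambda = \max_{x\in S}\min_i \bigl(B(I+B)^{d-1}x\bigr)_i / \bigl((I+B)^{d-1}x\bigr)_i$
over the compact computable set $S = S^{d-1}\cap\{x\geq 0\}$; it then relies on the abstract computable modulus-of-continuity machinery (Lemma \ref{globalmodulusofcontinuity}) and a covering of $S$ by computable balls to approximate the max to arbitrary precision, afterwards recovering the eigenvectors from $(B-\lambda I)r=0$ by Cramer's rule. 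You instead compute the \emph{eigenvector} first, via power iteration controlled by Birkhoff's contraction theorem in Hilbert's projective metric, and recover the eigenvalue afterwards from $\lambda_B = (Br)_1/r_1$; your second route via the characteristic polynomial is yet another valid path and is closer in spirit to the paper's eigenvector step. Each buys something: the paper's variational approach avoids any explicit convergence-rate estimates by exploiting compactness abstractly, whereas your Birkhoff route produces an explicit geometric error bound $C\tau(M)^n$, at the cost of the bookkeeping you correctly flag — one must bound $\min_{i,j} M_{ij}$ and $\min_i r_i$ from below computably to turn the projective contraction into a concrete $\ell^1$ estimate. (Lower bounds on the ratio $\min_i r_i / \max_j r_j$ in terms of the entry ratios of $M$ are available for positive matrices, so this is genuinely doable.) Your characteristic-polynomial alternative is arguably the most elementary: the coefficients are polynomial, hence computable, in the entries of $B$, and $\lambda$ is the unique simple real root attaining the spectral radius, so standard complex root isolation applies, with the kernel of $B-\lambda I$ then recovered exactly as in the paper.
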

\begin{proof}
We follow the approach in \cite{Sc,Gantmacher,Wielandt}.  Let $S$ be the closure of the intersection of the $(d-1)$-dimensional sphere $S^{d-1}$ with the first orthant in $\mathbb{R}^d$, see Observation \ref{rem:orthant}.  Moreover, let $T_k$ be a closed tubular neighborhood of radius $2^{-k}$ around $S$.  Since $B$ is irreducible, $(I+B)^{d-1}>0$.  Moreover, by \cite{Gantmacher,Wielandt}, the \PF\ eigenvalue is
$$
\max_{x\in S}\min_i\frac{(B(I+B)^{d-1}x)_i}{((I+B)^{d-1}x)_i}.
$$
We observe that the numerator $(B(I+B)^{d-1}x)_i$ and denominator $((I+B)^{d-1}x)_i$ are both computable because they only involve matrix multiplication.  Moreover, since $(I+B)^{d-1}>0$, with sufficient precision, the denominator is bounded away from zero.  Therefore, for each $i$, the following map is a computable function:
$$
(B,x,i)\mapsto\frac{(B(I+B)^{d-1}x)_i}{((I+B)^{d-1}x)_i}.
$$
Since there are only finitely many values for $i$, the minimum function is also computable, i.e., 
\begin{equation}\label{eq:PF:eigenvalue}
(B,x)\mapsto\min_i\frac{(B(I+B)^{d-1}x)_i}{((I+B)^{d-1}x)_i}
\end{equation}
is a computable function.

Since $(I+B)^{d-1}>0$, we can compute a lower bound on the entries of $(I+B)^{d-1}$ and use this to compute a $k$ so that for all $x\in T_k$, $(I+B)^{d-1}x>0$.  From this point forward, we use the domain of $M_d\times T_k$ for the function in \eqref{eq:PF:eigenvalue}.  Since $M_d$ is not compact, we cannot use Lemma \ref{globalmodulusofcontinuity} to imply the existence of a computable modulus of continuity.  We now fix $B$ (or an oracle for $B$), and let $f_B$ be the corresponding restriction of the function in \eqref{eq:PF:eigenvalue}.  In this case, $f_B$ is defined over a compact and computable set $T_k$ and has a computable global modulus of continuity $\psi_B$.

Fix an integer $n>k$.  By Observation \ref{rem:orthant}, we can compute a collection of closed balls $\{\overline{B}(s_{k_i},2^{-l_i})\}$ where the Hausdorff distance between the union and $S$ is less than $2^{-\psi_B(n)}$.  Therefore, for any $x\in S$, there is some $i$ so that $\|f_B(x)-f_B(s_{k_i})\|\leq 2^{-n}$.  By approximating $f_B(s_{k_i})$ for all $s_{k_i}$ sufficiently well, we derive upper and lower bounds on the maximum value of $f_B|_S$.  By increasing the value of $n$ until the upper and lower bounds are within $2^{-\ell}$ of each other, we can approximate the \PF\ eigenvalue.

The computation of the left and right eigenvectors is nearly identical, so we only show that one of them is computable.  By \PF\ theory, the \PF\ eigenvalue is an eigenvalue of (algebraic) multiplicity one and all of its entries are nonzero.  Therefore, there is a unique eigenvector $r$ associated to the \PF\ eigenvalue $\lambda$ whose first entry is $1$.  Therefore, after substituting $1$ for the first entry of $r$, $(B-\lambda I)r=0$ is a square system with $d-1$ variables with a unique solution.  This can be solved with Cramer's rule.  Since both $B$ and $\lambda$ can be approximated arbitrarily well, the numerator and denominator in Cramer's rule can be approximated to arbitrary precision.  Moreover, since the system has a unique solution, with enough precision, the denominator can be bounded away from zero.  Therefore, all of the entries in the right eigenvector can be approximated to arbitrary precision.
\end{proof}

Next, we  establish  for  transitive subshifts of finite type the main assumptions of Theorem \ref{thmmaingenentcomp}.

\begin{theorem}\label{computequistate}
Let $f:X\to X$ be a transitive subshift of finite type with transition matrix $A$ and computable distance $d_{\theta}$. Let $\Phi:X\to \bR^m$ be a computable potential.  Suppose that $(\epsilon_n)_n$ is a computable, convergent sequence of positive numbers converging to $0$.  Then, there exists an approximating sequence $(\Phi_{\epsilon_n})_n$ of $\Phi$ such that for all $v\in \bR^m$ and $n\in \bN$ the potential $v\cdot\Phi_{\epsilon_n}$ has a unique equilibrium state $\mu_{v\cdot\Phi_{\epsilon_n}}$. Moreover, the maps $(v,n)\mapsto h_{\mu_{v\cdot\Phi_{\epsilon_n}}}(f)$ and $(v,n)\mapsto\rv(\mu_{v\cdot\Phi_{\epsilon_n}})$ are computable.
\end{theorem}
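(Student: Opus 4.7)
The plan is to take the approximating sequence to consist of computable locally constant potentials, for which the uniqueness and computability of the equilibrium states reduce, via conjugation, to a finite-dimensional Perron--Frobenius computation handled by Proposition \ref{lem:PF:helper}.

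First, I would apply Lemma \ref{lemappro} to the computable inputs $\Phi$ and $\epsilon_n$, which produces for each $n$ a computable $\Phi_{\epsilon_n}$ constant on cylinders of some computable length $k_n$ with $\|\Phi - \Phi_{\epsilon_n}\|_\infty < \epsilon_n$, uniformly in $n$. Fix $v\in \bR^m$ and $n\in\bN$. Then $v\cdot\Phi_{\epsilon_n}\in LC_{k_n}(X,\bR)$, so by Proposition \ref{propone} there is a computable conjugacy $h\colon X\to Y$ onto a transitive subshift of finite type $g\colon Y\to Y$ with irreducible transition matrix $A'=A'(n)$ over an alphabet of computable size $d'=m_c(k_n)$, such that the conjugated potential $\Psi\eqdef (v\cdot\Phi_{\epsilon_n})\circ h^{-1}$ lies in $LC_1(Y,\bR)\subset LC_2(Y,\bR)$ and is a computable function of $(v,n)$.

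Proposition \ref{propmarkequi} now applies to $\Psi$ and $A'$: the matrix $B'_{ij}=e^{\Psi(i)}A'_{ij}$ is nonnegative, faithfully compatible with $A'$, and therefore irreducible, so it yields a unique equilibrium state $\mu_{v\cdot\Phi_{\epsilon_n}}$ of $v\cdot\Phi_{\epsilon_n}$, realized on $Y$ as the Markov measure $\mu_{(p,P)}$ associated with the row-stochastic matrix $P$ of Equation \eqref{eqn:faithful}. To compute the entropy and rotation vector, I would feed $B'$ into Proposition \ref{lem:PF:helper} to extract the Perron--Frobenius eigenvalue $\lambda$ and normalized eigenvectors $l,r$ as computable functions of $(v,n)$; then $P$, the stationary probability vector $p$, and every cylinder measure $\mu_{(p,P)}(\cC(i_0,\dots,i_s))$ are computable in $(v,n)$. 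The measure-theoretic entropy is computable from Equation \eqref{eq:entropy:pf} (the substituted form of \eqref{entmarkov}), and the rotation vector
\[
\rv(\mu_{v\cdot\Phi_{\epsilon_n}})=\int \Phi_{\epsilon_n}\,d\mu_{v\cdot\Phi_{\epsilon_n}}=\sum_{i\in\cA'}(\Phi_{\epsilon_n}\circ h^{-1})(i)\, p_i
\]
is a finite computable sum because $\Phi_{\epsilon_n}\circ h^{-1}\in LC_1(Y,\bR^m)$ and its values on the letters of $\cA'$ are computable in $n$.

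The hard part will be verifying that all of the above is uniform in $(v,n)$. The alphabet size $d'$, the matrix $A'$, and even the ambient dimension of the Perron--Frobenius problem change with $n$, so some care is required to ensure that the Turing machines behind Proposition \ref{lem:PF:helper} accept oracles for $v$ together with the index $n$ and return answers uniformly; in particular the local modulus of continuity produced in that proposition must itself be uniformly computable as $B'$ varies with $(v,n)$. A minor technical point is that $B'$ must remain nonnegative and irreducible for every $v\in\bR^m$ in order to invoke Perron--Frobenius theory, but this is automatic: faithful compatibility with the irreducible $A'$ is preserved under scaling by the strictly positive weights $e^{\Psi(i)}$.
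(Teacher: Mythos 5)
Your proposal follows essentially the same route as the paper: locally constant computable approximations from Lemma \ref{lemappro}, conjugation via Proposition \ref{propone} to reach an $LC_1$ potential on a larger alphabet, the faithfully compatible matrix $B'$ and Proposition \ref{propmarkequi} for the unique Markov equilibrium state, and Proposition \ref{lem:PF:helper} for computability of the Perron--Frobenius data. The paper chooses to re-derive the approximation construction in-line rather than invoke Lemma \ref{lemappro}, and it uses H\"older continuity to record uniqueness, but these are presentational differences. The uniformity concern you flag at the end is real but is already absorbed into the oracle-based framework: the alphabet size $d'=m_c(\ell_n)$, the transition matrix $A'$, and the entries of $B'$ are all computable from $n$ and an oracle for $v$, so the Turing machine simply assembles the oracle for $B'$ and hands it to Proposition \ref{lem:PF:helper}, whose output is then uniformly correct even though the ambient dimension varies; you correctly anticipated this but could state it affirmatively rather than as an open worry. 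Your simplified rotation-vector formula $\sum_i (\Phi_{\epsilon_n}\circ h^{-1})(i)\,p_i$ is a clean reduction of the paper's $LC_2$ form using $\sum_j P_{ij}=1$.
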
 
\begin{proof}
Since $\Phi$ is computable and $X$ is compact, $\Phi$ has a computable global modulus of continuity $\chi$.  For a fixed $n$, since $\epsilon_n$ is computable, with a sufficiently high approximation, we can find a $k$ so that $2^{-k}<\epsilon_n$.  Since the distance $d_\theta$ is computable, we can find a cylinder length $\ell_n$ so that the diameter of a cylinder of length $\ell_n$ is less than $2^{-\chi(k+1)}$.  For each nonempty cylinder $\cC(\tau)$ of length $\ell_n$, we can find a periodic point $x_\tau$ in this cylinder.  Since periodic points are computable and $\Phi$ is computable, we can find $a_\tau\in \bQ$, an approximation to $\Phi(x_\tau)$, to an accuracy of $2^{-k-1}$.  Then, for $x\in \cC(\tau)$, $\|\Phi(x)-a_\tau\|\leq\|\Phi(x)-\Phi(x_\tau)\|+\|\Phi(x_\tau)-a_\tau\|<2^{-k}<\epsilon_n$.  We define $\Phi_{\epsilon_n}(x)=a_\tau$ for all $x\in \cC(\tau)$.  With this definition, $\|\Phi_{\epsilon_n}-\Phi\|_\infty<\epsilon_n$.  We conclude that $(\Phi_{\epsilon_n})_n$ is an approximating sequence for $\Phi$. Moreover, each $\Phi_{\epsilon_n}$ is a locally constant computable potential with range in $\bQ$.
Since locally constant potentials are, in particular, H\"older continuous, we conclude that for all $v\in \bR^m$ and $n\in \bN$ the potential $v\cdot\Phi_{\epsilon_n}$ has a unique equilibrium state $\mu_{v\cdot\Phi_{\epsilon_n}}$.

Let $\psi$ be an oracle for $v$.  Since $\Phi_{\epsilon_n}$ and $v$ are both computable, the product $v\cdot\Phi_{\epsilon_n}$ is computable.  Moreover, since $\Phi_{\epsilon_n}$ is constant on cylinders of length $\ell_n$, we can approximate $v\cdot\Phi_{\epsilon_n}$ with a potential which is constant on cylinders of length $\ell_n$, let $\Phi_v$ be this approximation.

By applying Proposition \ref{propone} (and constructing a larger alphabet of size $d'\leq \ell_n^d$), we consider the conjugate subshift $g$ of finite type, with transition matrix $A'$ and potential $\Phi'_v$. Recall that $\Phi'_v$ is constant on cylinders of length one.  Moreover, pressure and equilibrium states (after taking the push forward) for $\Phi_v$ and $\Phi'_v$ are preserved under this conjugation, see, e.g., \cite{Wal:81}.  Observe that since $\Phi'_v$ is constant on cylinders of length $1$, it is in particular constant on cylinders of length $2$.

For each cylinder $\cC(i,j)$ of length $2$, we define $B'_{ij}=e^{\Phi'_v(\cC(i,j))}A'_{ij}$, and let $B'=(B'_{ij})$.  Since $g$ is transitive, $A'$ is irreducible, which implies that $B'$ is also irreducible.  Since the exponential function is computable and $\Phi'_v$ can be approximated, based on the oracle $\psi$, $B'$ can be approximated to arbitrary precision.  Moreover, by Lemma \ref{lem:PF:helper}, we can construct the \PF\ eigenvalue $\lambda$ and eigenvectors $r'$ and $l'$ with first coordinate $1$ for $B'$ at arbitrary precision.  Since all entries of $r'$ are computable and nonzero, they can be bounded away from zero, so we can construct the matrix $P'$ where $P'_{ij}=B'_{ij}\frac{r_j}{\lambda r_i}$ to arbitrary precision.  Moreover, we can also construct the probability vector $p'=(r_1'l_1',\dots, r'_{d'}l'_{d'})$ to arbitrary precision.  Let $h$ be the conjugating map between $f$ and $g$. Let $\mu'_v=\mu_{(p',P')}$ be defined as in Equation \eqref{markovdef}. It follows from Proposition \ref{propmarkequi} and Corollary \ref{corcomputrot} that $\mu'_v=h_\ast\mu_v$. Furthermore, applying again Corollary \ref{corcomputrot}, we can calculate the rotation vector of $\mu_v$ with respect to $\Phi_{\epsilon_n}$ by 
\begin{equation} 
\rv(\mu_v)=\left(\int (\Phi'_v)_k d\mu'_v\right)_k=\left(\sum_{i,j} (\Phi'_v(\cC(i,j)))_k p_i P_{ij} \right)_k,
\end{equation}
which can also be approximated to arbitrary precision.  Similarly, the measure theoretic entropy can be calculated to arbitrary precision using Formula \eqref{entmarkov} for $h_{\mu'_v}(f)=h_{\mu_v}(f)$.
\end{proof}

Finally, we are able to prove the main result of this Section:

\begin{theorem}\label{Hintcomp}
Let $f:X\to X$ be a transitive subshift of finite type with computable distance $d_\theta$. Let $\Phi\in C(X,\bR^m)$ be computable.  Then $\cH$ is  computable on $\inn \R(\Phi) $.
\end{theorem}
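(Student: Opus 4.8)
The plan is to verify the hypotheses of Theorem \ref{thmmaingenentcomp} for the transitive subshift of finite type $f:X\to X$ and the computable potential $\Phi$. First I would record the structural facts: since $d_\theta$ is computable, $\theta\in(0,1)$ is computable, the cylinders of any prescribed length can be enumerated, and every nonempty cylinder contains a computable preperiodic point, so $X$ is a computable compact metric space with computable metric and $h_{\rm top}(f)<\infty$. Being a subshift of finite type, $f$ is expansive, so $\mu\mapsto h_\mu(f)$ is upper semi-continuous on $\cM$; hence $\cH_\Phi$ is continuous and the standing assumptions of Section \ref{sec:computability:localized} are in force.

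Next I would produce the computable radius function required by Theorem \ref{thmmaingenentcomp}. By Theorem \ref{thmcomprotshift}, $\R(\Phi)$ is computable, and the proof of Theorem \ref{thmcomprotshift} (through Theorem \ref{thmrotcomp}) actually outputs, for each $n$, a convex polytope within Hausdorff distance $2^{-n}$ of $\R(\Phi)$. Assuming $\inn\R(\Phi)\neq\emptyset$ (otherwise there is nothing to prove), Proposition \ref{prop:radiuscomputable} then gives that $w\mapsto r(w)$, the radius of the largest open ball about $w$ contained in $\inn\R(\Phi)$, is computable on $\inn\R(\Phi)$; the computable function $w\mapsto \tfrac12 r(w)$ is positive and satisfies $B(w,\tfrac12 r(w))\subset\R(\Phi)$, as needed. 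Alternatively, Theorem \ref{thmrcomp} applies directly, since for a transitive subshift of finite type $\cM_{\rm Per}$ is dense in $\cM_f$ and the periodic points are uniformly computable.

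Then I would fix a computable, convergent sequence of positive reals $\epsilon_n\to 0$, say $\epsilon_n=2^{-n}$, and invoke Theorem \ref{computequistate}: it supplies an approximating sequence $(\Phi_{\epsilon_n})_n$ of $\Phi$, consisting of locally constant computable potentials with rational values, such that for every $v\in\bR^m$ and $n\in\bN$ the potential $v\cdot\Phi_{\epsilon_n}$ is H\"older continuous and therefore has a unique equilibrium state $\mu_{v\cdot\Phi_{\epsilon_n}}$, and such that the maps $(v,n)\mapsto h_{\mu_{v\cdot\Phi_{\epsilon_n}}}(f)$ and $(v,n)\mapsto\rv(\mu_{v\cdot\Phi_{\epsilon_n}})$ are computable. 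Together with the computability of $n\mapsto\epsilon_n$, these are precisely the remaining hypotheses of Theorem \ref{thmmaingenentcomp}, and applying that theorem gives that $\cH_\Phi$ is computable on $\inn\R(\Phi)$.

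Since the argument is an assembly of results already established, I do not expect a genuinely new obstacle here; the substance lies in Theorem \ref{computequistate} (reducing the equilibrium-state data of locally constant potentials to computable Perron--Frobenius data via Propositions \ref{propone}, \ref{propmarkequi} and \ref{lem:PF:helper}) and in Theorem \ref{thmmaingenentcomp}. The only points that will require a little care are checking that the radius function demanded by Theorem \ref{thmmaingenentcomp} is genuinely computable rather than merely existing, and confirming that the chosen approximating sequence survives the passages to subsequences performed inside the proof of Theorem \ref{thmHcompgen} --- which it does, because that proof only ever thins a given computable sequence.
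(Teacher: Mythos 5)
Your proposal is correct and follows essentially the same route as the paper: verify the hypotheses of Theorem~\ref{thmmaingenentcomp} by supplying a computable radius function and then invoking Theorem~\ref{computequistate} for the approximating sequence and the equilibrium-state data. The paper obtains the radius function directly from Theorem~\ref{thmrcomp}, which you list as the secondary route after your (equally valid) Theorem~\ref{thmcomprotshift}~+~Proposition~\ref{prop:radiuscomputable} argument; the difference is cosmetic.
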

\begin{proof}
If $\inn \R(\Phi)=\emptyset$ there is nothing to prove. Otherwise, by
applying Theorem \ref{thmrcomp} to $f$ and $\Phi$, we conclude that there exists a computable function $r:\inn \R(\Phi)\to \bR^+$ such that
for all $w\in \inn \R(\Phi)$ we have $B(w,r(w))\subset \R(\Phi)$. Furthermore, Theorem  \ref{computequistate}
establishes the remaining assumptions of Theorem \ref{thmmaingenentcomp}. We now apply Theorem \ref{thmmaingenentcomp} and the result follows.
 \end{proof}

\section{Entropy at boundary points.} \label{sec:7}
In this section, we construct a class of examples that indicate that, in general, localized entropy is not computable at the boundary of rotation sets. More precisely, we show that the global entropy function is, in general, discontinuous at the boundary of the total parameter space of rotation sets, cf Theorem \ref{thmglobentcont}.

 Recall that at an interior point $w_0$  of the rotation set we have
\begin{equation}\label{limul}
\cH_\Phi(w_0)=\lim_{n\to \infty}h^l_{\Phi_{\epsilon_n}}(w_0, \epsilon_n) =\lim_{n\to \infty}h^u_{\Phi_{\epsilon_n}}(w_0, \epsilon_n).
\end{equation}
We  now construct a family of examples for which there is an exposed boundary point where the two limits in Equation \eqref{limul} do not coincide. 

\begin{example}\label{ex1}
Let $f:X\to X$ be the one-sided full shift with alphabet $\{0,1,2,3\}$.  We construct a potential function $\Phi$ as follows: Fix a real number $a>0$ and consider a function $\ell_1:[0,a]\to\bR$ which is continuous, non-negative, increasing, and strictly concave with $\ell_1(0)=0$.  Let $\ell_2=-\ell_1$; therefore, $\ell_2$ is continuous, non-positive, decreasing, and strictly convex with $\ell_2(0)=0$.  Let $(x_k)_{k\in\bN}$ be a sequence with $x_k\in (0,a)$ for all $k$ that is exponentially and strictly decreasing to $0$, see Figure \ref{DefPhi}.  Let $x=\mathcal{O}(1)\in X$ and $y=\mathcal{O}(3)\in X$ be the fixed points of repeating $1$'s and $3$'s, respectively.

\begin{figure}[htb]
\begin{center}
\begin{tikzpicture}[scale=.6]
\filldraw[fill=lightgray,fill opacity=.25] (0,0) 
	\foreach \x in {.1,.2,.4,.7,1,1.5,2,3,5,7} {-- ({\x},{1.5*sqrt(\x)-1.5*(\x/7)*(\x/7)*(\x/7)*sqrt(\x)})}
	\foreach \x in {5,3,2,1.5,1,.7,.4,.2,.1}{-- ({\x},{-1.5*sqrt(\x)+1.5*(\x/7)*(\x/7)*(\x/7)*sqrt(\x)})}
	--cycle;
\draw[->] (-0.5,0) -- (8,0) node[below right]{$x$};
\draw[->] (0,-5.5) -- (0,5.5) node[above right]{$y$};
\draw[domain=0:2.645751311064591,smooth,variable=\x] plot ({\x*\x},{2*\x}) node[right]{$\ell_1$};
\draw[domain=0:-2.645751311064591,smooth,variable=\x] plot ({\x*\x},{2*\x}) node[right]{$\ell_2$};
\draw \foreach \x in {7,4.5,2.5,1.5,.75,.5,.3,.2,.1}
{({\x},.15) -- ({\x},-.15)};
\node[below] at (7,-.15) {$a$};
\node[below] at (4.5,-.15) {$x_1$};
\node[below] at (2.5,-.15) {$x_2$};
\node[below] at (1.5,-.15) {$x_3$};
\node[below] at (.75,-.15) {$x_4$};
\draw \foreach \x in {4.5,2.5,1.5,.75,.5,.3,.2,.1}
{
({\x-.15},{2*sqrt(\x)})--(({\x+.15},{2*sqrt(\x)})
({\x},{2*sqrt(\x)-.15})--(({\x},{2*sqrt(\x)+.15})
({\x-.15},{-2*sqrt(\x)})--(({\x+.15},{-2*sqrt(\x)})
({\x},{-2*sqrt(\x)-.15})--(({\x},{-2*sqrt(\x)+.15})
};
\draw \foreach \x in {0.002,0.005,0.0125,0.025,.05,.1,.2,.4,.7,1,1.5,2,3,5} 
{
({\x},{1.5*sqrt(\x)-1.5*(\x/7)*(\x/7)*(\x/7)*sqrt(\x)}) circle ({.15*sqrt(sqrt(\x/7)))})
({\x},{-1.5*sqrt(\x)+1.5*(\x/7)*(\x/7)*(\x/7)*sqrt(\x)}) circle ({.15*sqrt(sqrt(\x/7)))})
};
\node at (3.3,1.3) {$\operatorname{Rot}(\Phi)$};
\end{tikzpicture}
\end{center}
\caption{\label{DefPhi}The rotation set defined in Example \ref{ex1}.  The rotation set is an infinite polygon and the origin is an exposed point.}
\end{figure}
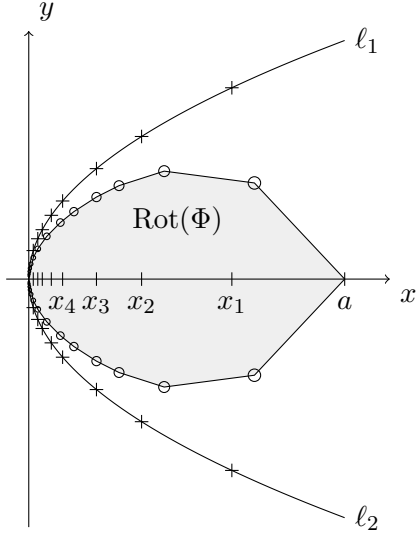
Next, we define several subsets of $X$.  Let $S_1=\{0,1\}$ and $S_2=\{2,3\}$.  Fix a natural number $\lambda\in\bN$ with $\lambda\geq 3$.  For $k\geq \lambda$, define
\begin{align*}
\widetilde{Y}_i(k)&=\{\xi \in X: \xi_1,\dots,\xi_k\in S_i\}\\
Y_i(\infty)&=\{\xi \in X: \xi_j\in S_i \ \mbox{for all} \ j\}=\bigcap_k \widetilde{Y}_i(k)\\
Y_i(k)&=\{ \xi\in \widetilde{Y}_{i}(k-1): \xi\not\in \widetilde{Y}_i(k)\}=\widetilde{Y}_i(k-1)\setminus\widetilde{Y}_i(k).
\end{align*}
For $k>\lambda$, define
\begin{align*}
X_1(k)&=Y_{1}(k)\setminus \cC_{k-1}(x)\\
X_2(k)&=Y_{2}(k)\setminus \cC_{k-1}(y)\\
X(k)&=X_1(k)\cup X_2(k)\\
Y_0(\lambda)&= X\setminus (\widetilde{Y}_1(\lambda)\cup  \widetilde{Y}_{2}(\lambda)).
\end{align*}
We define a potential $\Phi:X\rightarrow\mathbb{R}^2$ by
\begin{equation}\label{defpotphi}
\Phi(\xi)=\begin{cases}
(a,0)& {\rm if}\,\,   \xi\in Y_0(\lambda)\\
                      (x_{k-\lambda},0)    & {\rm if}\,\,  \xi\in X(k), \, k>\lambda\\
                       (x_{k-\lambda},\ell_1(x_{k-\lambda})) & {\rm if} \, \, \xi\in\cC_{k-1}(x) \cap Y_1(k),\, k>\lambda\\
                         (x_{k-\lambda},\ell_2(x_{k-\lambda})) & {\rm if} \, \, \xi\in\cC_{k-1}(y) \cap Y_2(k),\, k>\lambda\\
                       (0,0) & {\rm if}\,\,  \xi\in Y_1(\infty)\cup Y_2(\infty)
            \end{cases}
\end{equation}
\end{example}

Throughout the rest of this section, we study the potential $\Phi$ defined in the example above.

\begin{lemma}\label{phi_kw}
The potential $\Phi$ defined in Equation \eqref{defpotphi}
 is continuous and $(0,0)$ is an exposed point of $\R(\Phi)$. 
\end{lemma}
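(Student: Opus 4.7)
The plan is to dispose of continuity by a case analysis on the (countable) partition of $X$ by the pieces on which $\Phi$ is defined, and then to produce $(0,0)$ as an exposed point by exhibiting an explicit supporting hyperplane.

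For continuity, I observe that every piece of the partition except $Y_1(\infty)\cup Y_2(\infty)$ is determined by finitely many coordinates: $Y_0(\lambda)$ is a finite union of cylinders of length $\lambda$, and for each fixed $k>\lambda$ the sets $X_1(k)$, $X_2(k)$, $\cC_{k-1}(x)\cap Y_1(k)$, $\cC_{k-1}(y)\cap Y_2(k)$ are finite unions of cylinders of length $k$. Since $\Phi$ is constant on each such piece, it is automatically continuous (indeed locally constant) at every $\xi\notin Y_1(\infty)\cup Y_2(\infty)$. The only delicate case is $\xi\in Y_1(\infty)$ (the case $Y_2(\infty)$ being symmetric), where $\Phi(\xi)=(0,0)$. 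Given $\epsilon>0$, I will use the exponential decay $x_k\downto 0$ together with $\ell_1(0)=0$ and continuity of $\ell_1$ at $0$ to pick $N>\lambda$ so that $x_{N-\lambda}<\epsilon/2$ and $|\ell_1(x_{N-\lambda})|<\epsilon/2$. For any $\eta$ agreeing with $\xi$ on its first $N$ symbols, $\eta\in\widetilde{Y}_1(N)$; either $\eta\in Y_1(\infty)$ (so $\Phi(\eta)=(0,0)$), or $\eta\in Y_1(k)$ for some $k>N$, in which case the formula for $\Phi(\eta)$ shows $\|\Phi(\eta)\|\leq\sqrt{x_{k-\lambda}^2+\ell_1(x_{k-\lambda})^2}<\epsilon$, using that $(x_k)$ is decreasing.

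For the exposed point assertion, the plan is to check that the linear functional $w\mapsto w_1$ attains its minimum on $\R(\Phi)$ uniquely at $(0,0)$. First, $(0,0)\in\R(\Phi)$: the fixed point $x=\Or(1)$ lies in $Y_1(\infty)$, so $\Phi(x)=(0,0)$ and the Dirac measure $\mu_x$ has rotation vector $(0,0)$. Second, inspecting the five cases of Equation \eqref{defpotphi}, one sees that $\Phi_1\geq 0$ everywhere and $\{\Phi_1=0\}=Y_1(\infty)\cup Y_2(\infty)$; moreover $\Phi_2=0$ on this same set. Hence if $\mu\in\cM$ satisfies $\rv_1(\mu)=\int\Phi_1\,d\mu=0$, nonnegativity of $\Phi_1$ forces $\mu$ to be supported on $Y_1(\infty)\cup Y_2(\infty)$, whence $\rv_2(\mu)=\int\Phi_2\,d\mu=0$. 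Thus $\{w\in\R(\Phi):w_1=0\}=\{(0,0)\}$, so the half-plane $\{w_1\geq 0\}$ supports $\R(\Phi)$ along a hyperplane that meets it only at the origin, which is precisely the definition of an exposed point.

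The main obstacle is the continuity at points of $Y_1(\infty)\cup Y_2(\infty)$, since $\Phi$ switches between three different formulas on cylinders converging to such a point; the construction has been engineered so that all three formulas simultaneously vanish as the cylinder length tends to infinity, and the quantitative estimate above makes this precise. Once continuity is in hand, the exposed point statement is essentially a structural observation: the set $\{\Phi_1=0\}$ and the set $\{\Phi_2=0\}$ coincide on the zero locus, so one linear functional suffices to isolate $(0,0)$.
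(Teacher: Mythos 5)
Your proposal is correct and follows essentially the same strategy as the paper's proof: a case analysis for continuity (locally constant away from $Y_1(\infty)\cup Y_2(\infty)$, quantitative smallness of $\Phi$ on deep cylinders near those sets), and the positivity of the first coordinate of $\Phi$ off the fixed-symbol sets to isolate $(0,0)$. The one place where you diverge slightly is the exposed-point step: the paper routes through $\R(\Phi)\subset\conv(\Phi(X))$ and argues that the $y$-axis meets this compact convex hull only at the origin, whereas you work directly with invariant measures, observing that $\int\Phi_1\,d\mu=0$ together with $\Phi_1\ge 0$ forces $\mu$ to be carried by $\{\Phi_1=0\}=Y_1(\infty)\cup Y_2(\infty)$, on which $\Phi_2\equiv 0$ as well. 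Your route is a bit more direct since it avoids the (elementary but easy to gloss over) fact that a convex combination of points with nonnegative first coordinate summing to zero must use only points with first coordinate zero; both arguments are valid and of comparable length.
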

\begin{proof}
Let $(\xi^n)$ be a convergent sequence in $X$ with $\xi=\lim \xi^n$.  If $\Phi(\xi)=(a,0)$, $(x_{k-\lambda},0)$, or $(x_{k-\lambda},\ell_i(x_{k-\lambda}))$, then the behavior of $\xi$ is determined by the first $k$ terms of $\xi$.  Since $\xi^n$ converges to $\xi$, for $n$ sufficiently large, $\xi^n$ agrees with $\xi$ on the first $k$ terms.  Therefore, $\Phi(\xi^n)=\Phi(\xi)$.

If $\Phi(\xi)=(0,0)$, then $\xi\in Y_i(\infty)$ for $i=1,2$.  For any $k$, for all $n$ sufficiently large, $\xi^n$ and $\xi$ share the first $k$ terms, so $\xi^n\in \widetilde{Y}_i(k)$.  For $k>\lambda$, $\Phi(\xi^n)$ is therefore one of $(x_{k-\lambda},0)$, $(x_{k-\lambda},\ell_i(x_{k-\lambda}))$ or $(0,0)$.  As $k$ grows, the value for all of these expressions approaches $(0,0)$.

To show that $(0,0)$ is an exposed point of $\R(\Phi)$, observe that $\R(\Phi)\subset \conv(\Phi(X))$.  Since all points in the image of $\Phi$ other than $(0,0)$ have positive $x$-coordinate, $(0,0)$ is extremal.  Finally, the compactness of $\Phi(X)$ implies that $(0,0)$ is an exposed point with the $y$-axis as supporting line.
\end{proof}

We now restrict $x_k$ and $\ell_1$ to control the shape of $\R(\Phi)$.  Let $w(0)=(a,0)$ and $w(\infty)=(0,0)$.  Suppose $\sum_{k=1}^\infty x_k<a$ and $\ell_1(x_1)<(\lambda+1)\ell_1(x_2)$.  For $i=1,2$, define
$$
w_i(\lambda)=\frac{1}{3\lambda}[3(\lambda-1)(a,0)+2(x_1,\ell_i(x_1))+(x_1,\ell_{3-i}(x_1))].
$$
Moreover, for $j>\lambda$ and $i\in\{1,2\}$, we define
$$
w_i(j)=\frac{1}{j}\left[\lambda(a,0)+\sum\limits_{k=1}^{j-\lambda}(x_k,\ell_i(x_k))\right]
$$

Using techniques similar to those in the proofs for Example 2 in \cite{KW4}, one can show that
\begin{equation}
\R(\Phi)={\rm Conv}\{w(0),w(\infty), w_i(j): j\ge\lambda,\,i=1,2\}.
\end{equation}
In particular, $\partial \R(\Phi)$ is an infinite polygon.
Furthermore, by requiring additional properties on $\ell_1$, it can be arranged that $w(\infty)$ a smooth boundary point.  We refer the  reader to \cite{KW4} for details.

Next, we define approximations of the potential $\Phi$.  Fix $n\in\bN$ and let $\epsilon_n=2^{-n}$.  Since $\ell_1$ is continuous and $\ell_1(0)=0$, there exists $K=K(n)\geq 2\lambda$ such that $\|(x_{k-\lambda},\ell_1(x_{k-\lambda}))\|<\epsilon_n$ and $\|(x_{k-\lambda},\ell_1(x_{k-\lambda}))-(x_{l-\lambda},\ell_1(x_{l-\lambda}))\|<\epsilon_n$ for all $k,l\geq K$.  
Note that since $\ell_2=-\ell_1$ the corresponding inequalities also hold for points on the curve $\ell_2$. 
We define 
$$
X_{\epsilon_n}=\bigcup_{\lambda<k\leq K}\left(X(k)\cup\left(\cC_{k-1}(x)\cap Y_1(k)\right)\cup(\cC_{k-1}(y)\cap Y_2(k))\right)
$$

Finally, we define the  potentials $\Phi_{\epsilon_n}: X\to \bR^2$ by changing the behavior of the potential function near $Y_1(\infty)\cup Y_2(\infty)$.
\begin{equation}\label{defpotphin}
\Phi_{\epsilon_n}(\xi)=\begin{cases}
\Phi(\xi)\qquad & {\rm if}\,\,   \xi\in X_{\epsilon_n}\cup Y_0(\lambda)\\
                      (x_{K+1-\lambda},0)     & {\rm if}\,\, \xi\in (\widetilde{Y}_1(K)\cup\widetilde{Y}_2(K))\setminus(\cC_{K}(x)\cup\cC_{K}(y))\\
                      (x_{K+1-\lambda},\ell_1(x_{K+1-\lambda})) & {\rm if} \, \, \xi\in \cC_{K}(x) \\
                       (x_{K+1-\lambda},\ell_2(x_{K+1-\lambda})) & {\rm if} \, \, \xi\in \cC_{K}(y) 
                       \end{cases}
\end{equation}

It follows from the construction that $\Phi_{\epsilon_n}$ is constant on cylinders of length $K$ and that $(\Phi_{\epsilon_n})_n$ is a converging sequence of $\epsilon_n$-approximations of $\Phi$.

\begin{theorem}\label{thmfin}\label{thm:diff}
Let $\Phi$ and $(\Phi_{\epsilon_n})_n$ as in Equations \eqref{defpotphi} and \eqref{defpotphin}, then 
\begin{equation}\label{eqdiff}
0=\lim_{n\to \infty}h^l_{\Phi_{\epsilon_n}}(w(\infty), \epsilon_n) <\lim_{n\to \infty}h^u_{\Phi_{\epsilon_n}}(w(\infty), \epsilon_n)=\log 2.
\end{equation}
\end{theorem}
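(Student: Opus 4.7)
The plan is to prove each equality in \eqref{eqdiff} separately; the strict inequality then follows from $\log 2 > 0$. For the rightmost equality, I would first show $\cH_\Phi(w(\infty)) = \log 2$ and then invoke Proposition \ref{prop101}(i) with $\alpha = 1$. Inspecting \eqref{defpotphi}, the first coordinate $\Phi_1$ is non-negative on $X$ and vanishes precisely on $Y_1(\infty) \cup Y_2(\infty)$, so any $\mu \in \cM_\Phi(w(\infty))$ must be supported on this set. Since $Y_1(\infty)$ and $Y_2(\infty)$ are disjoint closed $f$-invariant subsets, each conjugate to the full shift on two symbols, the maximal measure-theoretic entropy of such a $\mu$ is $\log 2$, attained for instance by the Bernoulli $(1/2,1/2)$ measure on $Y_1(\infty)$ (whose $\Phi$-rotation vector is $(0,0)$).

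For the leftmost equality, I propose to use the rotation vector
\[
w' = \Phi_{\epsilon_n}(\mathcal{O}(1)) = (x_{K+1-\lambda}, \ell_1(x_{K+1-\lambda})),
\]
realized by the Dirac measure $\delta_{\mathcal{O}(1)}$ at the fixed point $\mathcal{O}(1) = x$. The choice of $K = K(n)$ guarantees $\|w'\| < \epsilon_n$, so $w' \in \overline{B}(w(\infty),\epsilon_n) \cap \R(\Phi_{\epsilon_n})$. The crux is to show $\delta_{\mathcal{O}(1)}$ is the \emph{only} $f$-invariant measure with rotation vector $w'$, which forces $\cH_{\Phi_{\epsilon_n}}(w') = 0$ and hence $h^l_{\Phi_{\epsilon_n}}(w(\infty),\epsilon_n) = 0$. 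I would argue in three steps. (a) From \eqref{defpotphin}, $\Phi_{\epsilon_n,1}$ attains its global minimum $x_{K+1-\lambda}$ precisely on $\widetilde{Y}_1(K) \cup \widetilde{Y}_2(K)$; any $\mu$ with $\int \Phi_{\epsilon_n,1}\,d\mu = x_{K+1-\lambda}$ therefore concentrates there, and $f$-invariance applied to $\bigcap_n f^{-n}(\widetilde{Y}_1(K) \cup \widetilde{Y}_2(K))$ together with a short combinatorial chase (if both $(\xi_1,\dots,\xi_K)$ and $(\xi_2,\dots,\xi_{K+1})$ lie entirely in $S_1^K$ or entirely in $S_2^K$, they must belong to the same block since $S_1 \cap S_2 = \emptyset$) restricts $\mu$ to $Y_1(\infty) \cup Y_2(\infty)$. (b) On this set, $\Phi_{\epsilon_n,2}$ equals $\ell_1(x_{K+1-\lambda})$ on $\cC_K(x)$, $-\ell_1(x_{K+1-\lambda})$ on $\cC_K(y)$, and $0$ elsewhere, so $\int \Phi_{\epsilon_n,2}\,d\mu = \ell_1(x_{K+1-\lambda})$ rewrites as $\mu(\cC_K(x)) - \mu(\cC_K(y)) = 1$; since the two cylinders are disjoint, this forces $\mu(\cC_K(x)) = 1$. (c) Iterating $f$-invariance from $\mu(\cC_K(x)) = 1$ yields $\mu(\cC_j(x)) = 1$ for every $j \geq K$, so $\mu = \delta_{\mathcal{O}(1)}$, which has entropy $0$.

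The main subtlety is choosing the right witness: the more obvious candidate $\delta_{\mathcal{O}(0)}$, with rotation vector $(x_{K+1-\lambda},0)$, does \emph{not} work, because the symmetric mixture of Bernoulli $(1/2,1/2)$ measures on $Y_1(\infty)$ and $Y_2(\infty)$ shares that rotation vector but has entropy $\log 2$. The gap in \eqref{eqdiff} is witnessed only at an extreme vertex of $\R(\Phi_{\epsilon_n})$ generated by a measure concentrated on $\cC_K(x)$ (or $\cC_K(y)$); the construction of $\Phi_{\epsilon_n}$ via the sequence $(x_k)$ was engineered precisely so that such a vertex lies within $\epsilon_n$ of the exposed point $w(\infty)$ of $\R(\Phi)$, after which the verification in (a)--(c) reduces to the routine ergodic-theoretic bookkeeping sketched above.
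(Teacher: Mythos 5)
Your argument is correct, and the right-hand identity is handled essentially as in the paper: show $\cH_\Phi(w(\infty))=\log 2$ by noting the support of any $\mu\in\cM_\Phi(w(\infty))$ must lie in $Y_1(\infty)\cup Y_2(\infty)$, compute the topological entropy there, and apply Proposition \ref{prop101}. For the left-hand identity, however, you take a genuinely different route. Both proofs reduce to showing that the only invariant measure with $\rv_{\Phi_{\epsilon_n}}$-rotation vector $w'=(x_{K+1-\lambda},\ell_1(x_{K+1-\lambda}))$ is $\mu_x$, but your route is coordinate by coordinate: step (a) uses only that the first coordinate of the rotation vector attains the global minimum of $\Phi_{\epsilon_n,1}$, placing the support in $\widetilde{Y}_1(K)\cup\widetilde{Y}_2(K)$, and then an $f$-invariance overlap argument pins it into $Y_1(\infty)\cup Y_2(\infty)$; step (b) then reads off $\mu(\cC_K(x))=1$ from the second coordinate; step (c) iterates invariance on the nested cylinders $\cC_j(x)$ to get $\mu=\mu_x$. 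The paper instead uses the full two-dimensional extremality up front: $w'$ is an extreme point of $\conv(\Phi_{\epsilon_n}(X))$, so any $\mu\in\cM_{\Phi_{\epsilon_n}}(w')$ is supported on $\Phi_{\epsilon_n}^{-1}(w')=\cC_K(x)$; it then decomposes $\cC_K(x)=\{x\}\cup\bigcup_{k>K}A_k$ with $A_k=\cC_{k-1}(x)\setminus\cC_k(x)$ and uses $\mu(A_k)=\mu(A_{k+1})$ together with disjointness to force $\mu(A_k)=0$. Yours is arguably more elementary---it trades the extreme-point observation and the $A_k$ bookkeeping for a simple combinatorial chase and a cleaner nested-cylinder iteration---while the paper's is more compact because extremality collapses the support into one cylinder in a single stroke. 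Your closing remark explaining why $\delta_{\mathcal{O}(0)}$ fails as a witness is correct and not in the paper; it is a useful sanity check on why the vertex must be chosen at $\cC_K(x)$ rather than at the $\ell_2=0$ vertex.
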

\begin{proof}
First, we prove the right-hand-side identity in Inequality \eqref{eqdiff}.  In Lemma \ref{phi_kw}, we proved that $w(\infty)=(0,0)$ is an exposed point of $\R(\Phi)$.  Moreover, $w(\infty)$ is an extreme point of $\conv(\Phi(X))$, which implies  that each invariant measure $\mu$ with $\rv_\Phi(\mu)=w(\infty)$ must be supported on $Y_1(\infty)\cup Y_2(\infty)$.  Since each $Y_i(\infty)$ is a full shift on two symbols, it follows that $h_{\topo}\left(f |_{Y_1(\infty)\cup Y_2(\infty)}\right)=\log 2$.  From the variational principle, it follows that $\cH_\Phi(w(\infty))=\log 2$.  Therefore, $\lim_{n\to \infty}h^u_{\Phi_{\epsilon_n}}(w(\infty), \epsilon_n)=\log 2$ follows from Proposition \ref{prop101}.

Next we show the left hand-side identity in Inequality \eqref{eqdiff}.  By construction, $(x_{K(n)+1-\lambda},\ell_i(x_{K(n)+1-\lambda}))\in \overline{B}(w(\infty),\epsilon_n)$ for all $n\in \bN$ and $i=1,2$.  We claim $\cH_{\Phi_{\epsilon_n}}(x_{K+1-\lambda},\ell_i(x_{K+1-\lambda}))=0$ as follows: First, we observe that $(x_{K(n)+1-\lambda},\ell_i(x_{K(n)+1-\lambda}))$ is an extreme point of $\R(\Phi_{\epsilon_n})$ since the point is in $\R(\Phi_{\epsilon_n})$ and it is an extreme point of $\conv(\Phi_{\epsilon_n}(X))$.  More precisely, for $i=1$ (the case $i=2$ is analogous), the point is in the rotation set since $\rv_{\Phi_{\epsilon_n}}(\mu_x)=(x_{K(n)+1-\lambda},\ell_1(x_{K(n)+1-\lambda}))$, and it is an extreme point of the convex hull since all other images of $\Phi$ have either a larger $x$-coordinate or the same $x$-coordiante and a smaller $y$-coordinate.

Now, let $\mu\in\cM$ with $\rv_{\Phi_{\epsilon_n}}(\mu)=(x_{K(n)+1-\lambda},\ell_1(x_{K(n)+1-\lambda}))$.  Therefore, the support of $\mu$ is a subset of $\Phi_{\epsilon_n}^{-1}(x_{K(n)+1-\lambda},\ell_1(x_{K(n)+1-\lambda}))$.  Our goal is to show that $\mu=\mu_x$.  Observe that we can write
$$
 \Phi_{\epsilon_n}^{-1}(x_{K(n)+1-\lambda},\ell_1(x_{K(n)+1-\lambda}))= \cC_{K}(x)=\{x\}\cup \bigcup_{k>K} A_k,
$$
where $A_k= \cC_{k-1}(x)\setminus \cC_{k}(x)$.  By construction $f^{-1}(A_k)$ is the disjoint union of $A_{k+1}$ and cylinders of the form $\cC(\xi_1\pi_{k-1}(x)\xi_{k+1})$, where $\xi_1,\xi_{k+1}\in \{0,2,3\}$.  Let $\xi\in \cC(\xi_1\pi_{k-1}(x)\xi_{k+1})$, then $\Phi_{\epsilon_n}(\xi)\not=(x_{K(n)+1-\lambda},\ell_1(x_{K(n)+1-\lambda}))$.  We conclude that $\mu(\cC(\xi_1\pi_{k-1}(x)\xi_{k+1}))=0$, since otherwise $\rv_{\Phi_{\epsilon_n}}(\mu)$ would not be an extreme point.  By the $f$-invariance of $\mu$, it must be that $\mu(A_k)=\mu(A_{k+1})$.  Therefore, $\mu(A_k)=\mu(A_l)$ for all $k,l>K$.  Since the $A_k$'s are pairwise disjoint, $\mu(A_k)=0$ for all $k>K$.  Hence, the support of $\mu$ is $x$ and $\mu=\mu_x$.  It follows that $h^l_{\Phi_{\epsilon_n}}(w(\infty), \epsilon_n)=0$ for all $n\in \bN$, and we obtain $\lim_{n\to \infty}h^l_{\Phi_{\epsilon_n}}(w(\infty), \epsilon_n)=0$.
\end{proof}

\begin{remark}
By choosing $a$, $x_k$ to be computable real numbers, and $\ell_1$ to be a computable function, the result of Theorem \ref{thm:diff} also applies in the computable case.
\end{remark}

\bibliographystyle{plain}

\end{document}